\documentclass[12pt]{amsart}%
\usepackage{amsfonts}
\usepackage{epsfig}
\usepackage{graphicx}
\usepackage{amsmath}
\usepackage{amsfonts}
\usepackage{amssymb}
\usepackage{latexsym}
\usepackage{rotating}
\usepackage{pstricks, pst-node, pst-text, pst-3d}
\usepackage{amsbsy}
\usepackage{bm}
\usepackage{dsfont}

\setcounter{MaxMatrixCols}{30}
\input{amssym.def}
\setlength{\textwidth}{5.5in}
\newtheorem{theorem}{Theorem}
\newtheorem{proposition}{Proposition}
\newtheorem{corollary}{Corollary}
\newtheorem{lemma}{Lemma}
\newtheorem{remark}{Remark}

\newtheorem{definition}{Definition}
\theoremstyle{remark}
\newtheorem{example}{Example}

%%%%%%%%%%%% \rm %%%%%%%%%%%%

\newcommand{\Id}{\text{\rm Id}}

\newcommand{\ad}{\text{\rm ad}}
\newcommand{\spn}{\text{\rm span}\,}

\newcommand{\End}{\text{\rm End}}

\newcommand{\dist}{\text{\rm dist}}
\newcommand{\capac}{\text{\rm cap}}

%%%%%%%% \mathfrak %%%%%%%%%

\newcommand{\g}{\mathfrak{g}}

%%%%%%%% \mathbb %%%%%%%%%

\newcommand{\R}{\mathbb{R}}

\newcommand{\heis}{\mathbb{H}}

%%%%%%%% \mathcal %%%%%%%%%

%\usepackage{showkeys}

\begin{document}
\title[Extremal functions for modules...]{Extremal functions for modules of systems of measures}
\author[M.~Brakalova, I.~Markina,  and A.~Vasil'ev]{Melkana Brakalova$^{\dag}$, Irina Markina$^{\ddag}$, and Alexander Vasil'ev$^{\ddag}$}

\thanks{The author $^{\dag}$ was partially supported by a Faculty Research Grant, Fordham University, USA, and by the Norwegian Research Council grant \#204726/V30.  The authors$^{\ddag}$ were partially supported by
the grants of the Norwegian Research Council \#204726/V30 and  \#213440/BG, as well as by EU FP7 IRSES program STREVCOMS, grant  no. PIRSES-GA-2013-612669}
 
\subjclass[2010]{Primary: 30C65, 30C85; Secondary:  26B10, 46E35} \keywords{Module of a family of curves, extremal metric, Sobolev space,  nilpotent Lie group, Carnot group}

\address{M.~Brakalova: Department of Mathematics, Fordham University, 407 John Mulcahy Hall Bronx, NY 10458-5165, USA}
\email{brakalova@fordham.edu}

\address{I.~Markina and A.~Vasil'ev: Department of Mathematics, University of Bergen, P.O.~Box 7800,
Bergen N-5020, Norway}
\email{irina.markina@uib.no}
\email{alexander.vasiliev@math.uib.no}

\begin{abstract}
We study  Fuglede's $p$-module of systems of measures in condensers in  Euclidean spaces and on polarizable Carnot groups.
We apply and generalize a result by Rodin, which  provides an explicit method for finding the extremal function and the 2-module of a foliated family of curves  in $\mathbb R^2$, to  a variety of settings. In the planar case, we apply Rodin's method to obtain  estimates for the conformal module of a parallelogram and of a ring domain using directional dilatations.  In $\mathbb R^n,$ we {identify the extremal function and} compute the $p$-module of images of  families of connecting curves and  of separating sets with respect to the plates of a condenser  under  homeomorphisms of certain regularity.  Then we calculate the module and find the extremal measures for the spherical ring domain on polarizable Carnot groups and extend Rodin's theorem to the spherical ring domain on the Heisenberg group. 
\end{abstract}
\maketitle

\tableofcontents

%%%%%%%%%%%%%%%%%%%%%%%%

\section{Introduction}\label{introduction}

%%%%%%%%%%%%%%%%%%%%%%%%

%$$$$$$$$$$$$$$$$$$$$$$$$$$$$$$$$$$$$$$$$$$$$$ The NEW INTRODUCTION%%%%%%%%%%%%%%%%%%%%%%%%%%%
Let $\Omega$ be a bounded domain in a polarizable Carnot group $G$, a particular case of which is the Euclidean space $\mathbb R^n,$ $n\geq 2$, and let $D_0$ and $D_1$ be two disjoint compacts in the closure $\overline{\Omega}$ of $\Omega$. The triple $(\Omega; D_0,D_1)$  is called a {\it condenser} in $G$, and the compacts $D_0$ and $D_1$ -- its plates. Two important quantities related to the condenser $(\Omega; D_0,D_1)$ are the module
of the family of  curves  $\Gamma(\Omega; D_0,D_1)$ connecting the compacts $D_0$ and $D_1$ in $\Omega$, and the module of  the surfaces, or  more generally, the sets $\Sigma(\Omega; D_0,D_1)$ separating $D_0$ and $D_1$ in $\Omega$. Observe, that the module of  $\Gamma(\Omega; D_0,D_1)$ is closely related to the notion of capacity of $D_0$ and $D_1$ relatively to $\Omega$, see~e.g.,~\cite{AikOht, Geh62, Hesse,  markina2003, Shlyk90, Shlyk93, zim68, zim69}. Since the notion of Lipschitz surfaces is  somewhat restrictive for  the development of the theory of modules and related topics,  it is convenient to use  the notion of modules of systems of measures  introduced by Fuglede~\cite{Fug} in 1957.

We  first recall the definition.
Let $(X,\mathfrak{M},m)$ be an abstract measure space with a fixed  measure
$m:\mathfrak{M}\to[0,+\infty]$ defined on the $\sigma$-algebra $\mathfrak{M}$ of  subsets in $X$.
We denote by $\mathcal{M}$ the system of all measures $\mu$ in $X$, whose
domains of definition contain $\mathfrak{M}$. 
With an arbitrary system of measures $E\subset\mathcal{M}$ we associate the class of non-negative measurable functions $\rho$
defined in $X$ and satisfying the condition
\begin{equation}\label{admissfun}
 \int_{X}\rho\,d\mu\geqslant 1,\qquad \mu\in E.
\end{equation}
 We call such $\rho$ {\it admissible}, and
we write $\rho\wedge\mu$ if~\eqref{admissfun} holds for a measure $\mu,$ and $\rho\wedge E$ if~ \eqref{admissfun} holds for every $\mu\in E$.
\begin{definition}\label{FugledeM}
 For $0<p<\infty$, the $p$-module $M_p(E)$ of  the system of measures E is defined as
\[
M_p(E)=\inf\limits_{\rho\wedge E}\int_{X}\rho^p\,dm,
\]
interpreted as $+\infty$ if the set $\{\rho\colon \rho\wedge E\}$ is empty. A function $\rho_0,$ $\rho_0\wedge E$, is called {\it extremal} for the $p$-module  $M_p(E)$  if $M_p(E)=\int_{X}\rho_0^p\,dm$. 
\end{definition}

Definition~\ref{FugledeM} is a natural generalization of the concept of the module of a family of curves in $\mathbb R^n$, $n\geq 2$. Given a family
$\Gamma$ of locally rectifiable curves in  space $X=\mathbb R^n$, one can regard  $\mathfrak{M}$ as the Borel $\sigma$-algebra, $m$ as the Lebesgue $n$-dimensional measure, and $\mu$ as  the arc-length of a curve which is a measure associated with this curve. This construction for the case of $\mathbb R^n$ was carefully
developed in detail in~\cite[Chapter 2]{Ohtsuka} and also in~\cite{Fug}.

We call a system of measures $E_0\subseteq E$ {\it extremal} for $M_p(E)$ if
 $$\int_{X}\rho_0\,d\mu=1\quad \text{and} \quad M_p(E)=M_p(E_0)=\int_{X}\rho_0\,d\mu,
 $$
 for $M_p$-almost all $\mu\in E_0$. If, in addition, $E_0=E$ for $M_p$-almost all $\mu,$ we call $E_0$ a  {\it complete  extremal system of measures}.
The family of {\it extremal measures} does not always exist and is not, in general, uniquely determined, although it is in the cases we consider. We consider this in more detail in the special case of {\it extremal family  of curves}  in Section~\ref{planar}.

The above definition  of an extremal function and measure is closely related to Beurling's Criterion, a nice and straightforward sufficient condition,
which guarantees that an admissible function for a family of curves in $\mathbb R^2$ is extremal for its module. 
Badger \cite{Badger}  generalized  Beurling's criterion to $\mathbb R^n$  making it necessary and sufficient for  the Fuglede $p$-module of measure systems.

It is well known  that in  $\mathbb R^n,$ $n\geq 2,$ the module of the family of all rectifiable curves $\Gamma,$ in  a spherical ring domain $\Omega=R_{ab},$ connecting  the two boundary  concentric spheres of radii $a,b$, $0<a<b<\infty,$  is equal to the module of the family of radial curves $\Gamma_0,$  connecting these boundary spheres, and that $\Gamma_0$ is extremal.    In the same spirit, the  family of concentric spheres of radii $r, \ a<r<b,$ separating the boundary spheres is extremal for the module of all Lipschitz separating surfaces in $R_{ab}$,~\cite{Geh62, Shlyk90, va61, zim66}. The extremal functions are also known.
Finding the  extremal function  and the $p$- module is, in general,  quite a difficult task, possible to be completed only  in a few cases. One of the main aims of this paper is extending a relatively less known result by Rodin~\cite{Rodin}, which provides a method for finding the extremal function that leads to an explicit calculation of the module of a complete extremal family of curves in the plane, to $\mathbb R^n, \ n\geq 2,$ and to polarizable Carnot groups.

In Section \ref{planar} we study  applications of 2-modules of families of curves in $\mathbb R^2$, introduce Rodin's theorem, Beurling's criterion,  the notion of extremal curve family, and  briefly  discuss its existence and uniqueness for curves. We apply Rodin's method to obtain exact formulas for the extremal function and the module of parallel slanted 
intervals in a parallelogram,  and of logarithmic spirals  in a ring domain, as well as of  their images.  We give  estimates for the  module of a quadrilateral and of a ring domain in terms of directional dilatations.

In Section \ref{RodinS},  we provide extensions of Rodin's theorem \cite{Rodin} to higher dimensions for rather general type of condensers. One of the main results is Theorem~\ref{Rodin1} which explicitly calculates the $p$-module $M_p(\Gamma_0^\prime)$ of an extremal family of curves $\Gamma_0^\prime=f(\Gamma_0)$, where $f$ is a diffeomorphism in $\Omega$ and $\Gamma_0=\Gamma_0(\Omega; D_0,D_1)$. An analogous result for the module of a system of measures associated with sets separating $D_0$ and $D_1$ in $\Omega$ is obtained in Theorem \ref{Rodin2}.
The tools to handle this more difficult case are developed in Section \ref{coareasection}, where we consider special cases of the coarea and change-of-variable formulas. 
Two typical examples of $(\Omega;D_0,D_1)$ are
 a {\it cylinder}  and a {\it spherical ring domain} in $\mathbb R^n$.
 Using monotonicity property of modules with respect to families of curves we come to some useful inequalities.
 %We also discuss properties of $p$-modules of a system of measures and Badger's extension of Beurling's criterion to $\mathbb R^n$.
Turning from the smooth case to more general settings,  we discuss several minimal regularity properties of the homeomorphism $f$. In particular, we can assume $f$ to be Sobolev regular and of finite distortion.

% Let us consider a condenser $(\Omega;D_0,D_1)$ in $\mathbb R^n$ foliated by $\mathbf{u}\colon D\times [a,b]\mapsto \Omega$,
% where $D$ is a compact $(n-1)$-Hausdorff dimensional subset of  $\mathbb R^n$ and $[a,b]$ is an interval in $\mathbb R$, so that $D_0=\mathbf{u}(D,a)$ and $D_1=\mathbf{u}(D,b)$. 

%We develop the notion of {\it separating sets} in the condenser $(\Omega;D_0,D_1)$ and give explicit formulas, see Theorem \ref{Rodin2}, for $q$-modules, $\frac{1}{p}+\frac{1}{q}=1$, of diffeomorphic images $\Sigma_0^\prime=f(\Sigma_0)$ of the family of sets $\Sigma_0$ separating the plates $D_0$ and $D_1$ which are  the $\mathbf{u}$-embeddings of $D$. The tools to handle this more difficult case are developed in Section \ref{coareasection}, where we consider special cases of the coarea and change-of-variable formulas. 

%Two typical examples of $\mathbf{u}$-embeddings are the cases when $D$ is a compact subset of $\mathbb R^{n-1}$ and $\mathbf{u}\equiv\id$, so $\Omega$ is a {\it cylinder} in $\mathbb R^n$, and when $D$ is  the unit sphere $S_1$ in $\mathbb R^n$ and $\mathbf{u}(x,t)=tx$, where $x\in S_1$ and $t\in [a,b]$, so $\Omega$ is a {\it spherical ring domain} in $\mathbb R^n$.
%Any homeomorphic image of  a spherical ring domain we will call simply a  {\it  ring domain}.

After a thorough analysis has been developed on Carnot groups, see e.g., \cite{Heinonen95, HH, Vodopis90},  analogous problems can be formulated for these groups. The only locally rectifiable curves on Carnot groups are so-called horizontal curves. To define a ring domain on Carnot groups one can use an analogue of the Euclidean norm which is a homogeneous functions with respect to the  anisotropic dilation $\delta_s$, $s>0$, respecting grading of the corresponding Lie algebras. Unfortunately, the radial curves $\delta_{(\cdot)}$ (considered as functions of $s$) are not horizontal in general. On the Heisenberg group, Kor\'anyi and Reimann~\cite{KorReim87}  found another family of `radial' curves which are horizontal and orthogonal (in a correct sense) to the spheres defined as the level sets of the homogeneous norm. It was also shown that this family of curves is extremal for the ring domain. The existence of a homogeneous norm and corresponding horizontal family of radial curves was proved for some other classes of  Carnot groups in~\cite{balogh2001}, which received the name {\it polarizable Carnot groups}. One of our aims in Section \ref{groups} is to show that the family of concentric spheres separating two boundaries of the spherical ring domain is also extremal for the module of  all separating sets in this ring on polarizable Carnot groups. We extend  Theorem~\ref{Rodin1} to this geometric setting.

%%%%%%%%%%BEGINNING OF THE PLANAR CASE%%%%%%%%%%%%%%%%%%%%%%%%%%%%%%%%
\section{ Rodin's theorem and extremality in $\mathbb R^2$}\label{planar}

The modules  of a quadrilateral and of a ring domain, were originally introduced  and studied in the works of Gr\"otzsch, Teichm\"uller,  Ahfors,   and many others \cite{ Ahlfors, Grotz, Jen, LehtoVirt, Teich}  in the first half of the 20$^{\rm th}$ century.  It is natural to assume that these concepts have provided some of  the inspiration behind the development of the notion of  the extremal length of a family of curves by Beurling and published  in a  joint work  with Ahlfors  \cite{AhlB} in 1950. The module of a  family of curves is defined as the reciprocal of the extremal length of that family. These  concepts and their generalizations have become  a  powerful tool in the study of a wide range of function-theoretic properties of domains in the complex plane, space, and on general  metric  measure spaces.

\subsection{Extremal functions and extremal families of curves}

First, we reformulate Definition~\ref{FugledeM}  for the special case of 2-module of a family of curves  in $\mathbb R^2,$ state Beurling's criterion, discuss extremal functions, and introduce the notion of extremal families of curves.

\begin{definition}\label{moduledef}Let $\Omega$ be a domain in $\mathbb R^2$, and let  $\rho\colon \mathbb R^2\to \mathbb R$ be a
non-negative, measurable function. Let $\Gamma$ be a family of locally rectifiable curves in $\Omega$. We say that $\rho$ is an {\it admissible} function for $\Gamma$ if 
\begin{equation}\label{adm1}
\int_{\gamma}\rho \, ds\geq 1,\quad \text{for any $\gamma\in\Gamma$,}
\end{equation}
where $\gamma$ is parametrized by arc-length $s$. 
Then the quantity
\begin{equation}\label{mod1}
M_2(\Gamma):=\inf\left\{\int_{\Omega}\rho^2 (x) dm\colon \text{among admissible $\rho$}\right\},
\end{equation}
 where  $m$ is the Lebesgue measure in $\mathbb R^2,$ is called the 2-module of $\Gamma.$ If the equality in~\eqref{mod1} is attained for some function $\rho_0$, then this function is called {\it extremal} for the   module problem $M_2(\Gamma)$.
\end{definition}

The extremal function is essentially unique when it exists (see, e.g., \cite{Garmar}), and it exists when we possibly  exclude some subfamilies of curves of vanishing module, see~\cite{Fug} or Proposition~\ref{modulusprop}, item~(7). 

 In what follows, we also use {\it conformal invariance, monotonicity and subadditivity } of the 2-module, properties derived in e.g.  \cite{Ahlfors, Garmar, Pom, Vas}.

%In accordance to Definition \ref{extrmeas}, we define {\it extremal family of curves for} $M_2(\Gamma).$

\begin{definition} Let $\rho_0$ be an extremal function for $M_2(\Gamma)$. We say that  $\Gamma_0\subset\Gamma$  is an extremal family of curves  for  $M_2(\Gamma)$ if it satisfies the following two conditions:

\begin{equation}\label{extremal1}
\int_{\gamma}\rho \, ds=1,\quad \text{for $M_2$- almost all $\gamma\in\Gamma_0$, and $M_2(\Gamma)=M_2(\Gamma_0).$}
\end{equation}
%and 
%\begin{equation} \label{extremal2}
%M_2(\Gamma)=M_2(\Gamma_0).
%\end{equation}
\end{definition}

Below we  discuss the existence and uniqueness of extremal families, up to a family of curves of vanishing $2$-module .

Let $Q_{ab}=\{ (x,t): 0\leq x\leq a, 0 \leq t\leq b\}$ and let $D_0=\{(x,t): 0\leq x\leq a, t=0\}$, $D_1=\{(x,t): 0\leq x\leq a, t=b\}$.
Let $\Gamma=\Gamma(Q_{ab},D_0,D_1)$ be the family of all curves connecting $D_0$ and $D_1$. One can show that the function $\rho_0=\dfrac 1b $ is extremal for $M_2(\Gamma)$, and therefore $M_2(\Gamma)=\dfrac ab,$ see e.g.,  \cite{Ahlfors, LehtoVirt}.   Also, the  family $\Gamma_0\subset \Gamma$ of vertical segments connecting  $D_0$ and $D_1$ satisfies the properties (\ref{extremal1}) and therefore is extremal for  $M_2(\Gamma)$. The extremal family for the module  $M_2(\Gamma)$  in $Q_{ab}$, is unique up to a  family of curves of vanishing $2$-module.  The family $\Gamma\setminus \Gamma_0,$ has  $\rho_0$ as an extremal function, too.  However, there is no extremal family for $M_2(\Gamma\setminus \Gamma_0).$  Consider any conformal mapping $f\colon Q_{ab}\rightarrow Q.$  The image  $f(\Gamma_0)$  is an extremal family for the $2$-module $M_2(\Gamma(Q;f(D_0),f(D_1)))$  of the family of curves connecting $f(D_0)$ and $f(D_1)$ in $Q$, and the extremal function is $\rho=\frac{1}{b}|f'|$.

Consider the square $Q_{11}$,  and the family of all curves $\Gamma^\star$ connecting the horizontal sides or the vertical sides of $Q_{11}$. Let  $\Gamma_0$ be the family of vertical segments connecting the horizontal sides,   and $\Sigma_0$ be the family of horizontal segments connecting the vertical sides of $Q_{11}$, (see \cite{Badger}).  The extremal function for $M_2(\Gamma^\star)$ is $\rho_0=1$, and each of  $\Gamma_0$, $\Sigma_0$, and $\Gamma_0\cup\Sigma_0$ is an  extremal family for  $M_2(\Gamma^\star).$ In this example the extremal family for $M_2(\Gamma^\star)$ is not uniquely determined (up to a set of curves of vanishing $2$-module). In this paper we consider  extremal family of curves that are  unique up to a set of curves of $2$-module equal to 0.
  
 Below we state Beurling's extremaility criterion, formulated by Ahlfors, \cite[Theorem~4-4]{Ahlfors} and cited as Beurling's unpublished work.  It provides a sufficient condition
guaranteeing that an admissible function for a family of curves in the plane is extremal for its module.

\medskip
\noindent
{\bf Theorem A}~(Beurling's Extremality Criterion).\label{Beurling}
{\it  Let $\Gamma$ be a family of curves in a domain $\Omega\subseteq \mathbb R^2$ and let $\rho_0$ be an admissible function for $\Gamma$. Suppose that there exists  a subfamily $\Gamma_0$ in $\Omega$, such that
\begin{itemize}
\item $\Gamma_0\subseteq \Gamma$;
\item $\int_\gamma\rho_0\, ds=1$ for every $\gamma\in \Gamma_0$;
\item For all real valued functions $g\in L^2(\Omega), $  the condition $\int_\gamma g\, ds\geq 0$ for all $\gamma \in \Gamma_0,$  implies
\begin{equation}\label{rho}
\int_{\Omega}\rho_0 g \,dm\geq 0.
\end{equation}
\end{itemize}
Then $\rho_0$ is   extremal for  $M_2(\Gamma)$ and
$$M_2(\Gamma_0)=M_2(\Gamma)=\int_{\Omega}\rho_0^2\,dm.$$
}

\medskip
\noindent

\begin{remark} \label{extremalt}
As one can see from the proof of Theorem A \cite{Ahlfors}, one needs to consider only the case $g=\rho-\rho_0$ for all admissible $\rho$ for $M_2(\Gamma)$ to claim extremality of $\rho_0.$ 
Indeed, (\ref{rho}) implies $\int_\Omega \rho_0^2dm \leq \int_\Omega\rho \rho_0dm.$ Squaring both sides, applying the Cauchy-Schwarz inequality
$\left(\int_\Omega \rho_0^2 dm \right)^2  \leq \int_\Omega\rho^2 dm \int_\Omega \rho_0^2 dm,$  implies 
$
\int_\Omega \rho_0^2 dm\leq \int_\Omega \rho^2 dm,
$
for all admissible $\rho.$ Therefore $M_2(\Gamma)=\int_\Omega \rho_0^2 dm$  and $\rho_0$ is extremal.
\end{remark}

In view of this criterion, $\Gamma_0$ is an {\it extremal family of curves} for $M_2(\Gamma)$.  Let us observe that the practical verification of admissibility of $\rho_0$ for the entire family $\Gamma$ is perhaps the most difficult part of this criterion although the other parts are equally important. As a supporting argument, consider the following example.

\begin{example}\label{e4} 
Let $\Omega=Q_{1,h\sin\theta}=[0,1]\times[0,h\sin\theta]\subset \mathbb C$, $\theta\in (0,\pi/2]$, and let $\Omega'=f(\Omega)$, where
 $$f(x,t)=(x+t\cot\theta,t),\quad t\in[0,h\sin\theta],\quad x\in [0,1],\quad h>0,\quad \theta\in (0,\pi/2).$$
Then $\Omega'$ is the parallelogram in $\mathbb C$ with the vertices at $0, 1, 1+h e^{i\theta}, h e^{i\theta}$. Define the  family of locally rectifiable curves $\Sigma$  in $\Omega$ joining the opposite vertical sides of $\Omega$ and by $\Sigma_0$ the subfamily of horizontal intervals.  Let  $\Sigma'=f(\Sigma)$ and $\Sigma'_0=f(\Sigma_0)$.
One has $M_2(\Sigma'_0)=M_2(\Sigma_0)=M_2(\Sigma)=h\sin \theta$, see  \cite[Lemma, Page 35]{Ahlfors2}. At the same time, see~\cite{AndVuor},
\[
M_2(\Sigma')=\frac{\mathbf{K}'}{\mathbf K}(r_{\theta/\pi}),
\]
where
\[
r_{\theta/\pi}=\mu^{-1}\left(\frac{\pi h}{2\sin\theta}\right),\quad \mu(r)=\frac{\pi}{2\sin\theta}\frac{\mathbf{F}\left(\frac{\theta}{\pi},1-\frac{\theta}{\pi};1;1-r^2\right)}{\mathbf{F}\left(\frac{\theta}{\pi},1-\frac{\theta}{\pi};1;r^2\right)}.
\]
Here $\mathbf{K}$ and $\mathbf{K}'$ are complete elliptic integrals, and $\mathbf{F}$ means the Gaussian hypergeometric function $\ _2F_1$.
In this example,  we see that $M_2(\Sigma'_0)$ is rather simple expression, whereas
the calculation of the module of the larger family of curves $\Sigma'$ is  a much harder task and requires explicit conformal maps based on the Weierstrass $\wp$-function.

Due to the monotonicity property we can estimate the module of a family of curves using a convenient subset. For example, the inequality $M_2(\Sigma'_0)\leq M_2(\Sigma')$ gives an interesting lower estimate for the elliptic integrals $$h\sin \theta\leq \frac{\mathbf{K}'}{\mathbf K}(r_{\theta/\pi}). $$ 
\end{example}
  
The modules for $\Sigma'_0$ and $\Sigma'$ coincide only if $\theta=\pi/2$, and in this case $\Sigma_0$ is the extremal  family for  $\Sigma$. In view of Theorem~A, this means that the function $\rho_0(x)\equiv 1$ is extremal for $\Sigma_0$, but is not admissible for $\Sigma$ whereas other conditions of Beurling's criterion remain true.

%Thus we restate  Beurling's Criterion in the following form.

%\begin{theorem}[Extremal Criterion I]  \label{extremalt}
%Let $\Gamma$ be  family of curves in a domain $\Omega\subseteq \mathbb R^2$ and let $\Gamma_0\subseteq \Gamma.$ Assume that  $\rho_0$ is an admissible function for $\Gamma$. 
%Suppose that 
%\begin{equation}\label{one}
%\int_\gamma\rho_0 ds=1\ \  \text {for every} \ \  \gamma\in \Gamma_0
%\end{equation} 
%and that for any admissible $\rho$ for $M_2(\Gamma)$
%\begin{equation}\label{rho}
 %\int_\Omega \rho_0(\rho-\rho_0) dm\geq 0.
 %\end{equation}
%Then $\rho_0$ is an extremal function for $M_2(\Gamma),$ and  $M_2(\Gamma)=M_2(\Gamma_0)=\int_\Omega \rho_0^2dm.$
%\end{theorem}

%As we already mentioned, Rodin, \cite{Rodin}, identified the extremal function and expressed the module of an extremal family of curves which are images under a sufficiently regular homeomorphism of vertical intervals in a rectangle, $R=[0,1]\times [a,b].$ We  slightly reformulate Rodin's result.

\subsection{Rodin's theorem and applications}

 First, we state Rodin's theorem which provides an explicit method for calculating the extremal functions and the module of a family of curves.  Later, we obtain upper and lower estimates for the conformal module of quadrilaterals and doubly connected domains,   in terms of directional dilatations, see Definition \ref{directional}.

Let $f$ be a smooth, orientation preserving homeomorphism of $Q_{1b}$ onto a region $Q \in \mathbb R^{2}$, such that the Jacobi matrix  exists and its determinant $J_f$ is strictly positive. Let $\Gamma_0$
be the family of vertical intervals $v_x(t)=\{(x,t)\colon t\in [0,b]; \,\text{$x\in [0,1]$ is fixed}\}$, and let  $c_x(t)=f(v_x(t))\in Q$. Thus, the image of $\Gamma_0$ is  $f(\Gamma_0)=\{c_x\colon [0,b]\to Q,\,x\in [0,1]\}$.

\medskip
\noindent
{\bf Theorem B}~(Rodin's Theorem, \cite{Rodin}). \label{Rodin}{\it Let
\[
\ell(x)=\int_0^b\frac{|\dot{c}_x|^2}{J_f}\,dt,\quad x\in [0,1],
\]
where $\dot{c}_x=\frac{\partial}{\partial t}c_x(t)$.
Then 
\begin{equation}\label{Rodinextremal}
\rho_0(y)=\frac{1}{\ell(x)}\left(\frac{|\dot{c}_x|}{J_f}\right)\circ f^{-1}(y),\quad (x,t)\in Q_{1b},\quad y=f(x,t)\in Q,
\end{equation}
is the extremal function for the $2$-module  of the family $f(\Gamma_0)$ and
\begin{equation} \label{Rodinformulat}
M_{2}(f(\Gamma_0))=\int_{Q}\rho^2_0(y)\,dy=\int_0^1 \ell^{-1}dx.
\end{equation}
}

\medskip
\noindent

To prove Theorem B, one shows that $\rho_0$ in (\ref{Rodinextremal}) satisfies conditions of Theorem~A.  The computation of $M_2(f(\Gamma_0))=\int_\Omega \rho_0^2 \,dm$  leads to  (\ref{Rodinformulat}).

The above result  was  used by Rodin and Warschawski  to characterize the boundary behavior of conformal maps, see, e.g., \cite{RodinW1, RodinW2}.

If a quadrilateral $Q$ is a conformal image of the rectangle $Q_{1b}$, and if $\Gamma=\Gamma(Q_{1b};D_0,D_1)$, then $M_2(f(\Gamma))=M_2(\Gamma)=1/b$.
If $\Sigma$ is the family of curves separating $D_0$ and $D_1$ in  $Q_{1b}$, then $M_2(f(\Sigma))M_2(f(\Gamma))=M_2(\Sigma)M_2(\Gamma)=1$, see e.g., \cite{Ahlfors, Jen, LehtoVirt, Vas}.
Sometimes,  families $\Gamma$ and $\Sigma$ are called {\it conjugate}. 

Let us return back to Example~\ref{e4} of the parallelogram $\Omega'$. As it was observed, the expression  for $M_2(\Sigma')$ is difficult and involves some implicit functions and elliptic integrals. Rodin's theorem allows to give some explicit estimates of  $M_2(\Sigma')$. Denote by $\Gamma'_{\theta}$ the family of parallel slanted intervals connecting the horizontal sides of $\Omega'$. By $\Gamma'$, we denote the whole family of curves connecting the horizontal sides of $\Omega'$, and if  $\theta=\pi/2$, then $\Gamma'_{\theta}=\Gamma_0$. As it was remarked, $M_2(\Sigma'_0)=M_2(\Sigma_0)=h\sin\theta$ and $M_2(\Sigma')\geq h\sin\theta$ because of  monotonicity~of~$M_2$.

\begin{proposition}\label{inequalityt}
\begin{equation}\label{parallelogram1}
M_2(\Gamma'_{\theta})=\frac{\sin\theta}{h}, \quad M_2(\Gamma'_{\theta})M_2(\Sigma'_0)=\sin^2\theta,
\end{equation}
and 
\begin{equation}\label{parallelogram2}
|M_2(\Sigma')-M_2(\Sigma'_0)|\leq h\dfrac{\cos^2\theta}{\sin\theta}.
\end{equation}
\end{proposition}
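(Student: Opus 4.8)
The plan is to read off the first two identities of \eqref{parallelogram1} directly from Theorem~B (Rodin's theorem) together with the data recalled in Example~\ref{e4}, and then to establish the estimate \eqref{parallelogram2} by trapping $M_2(\Sigma')$ between $M_2(\Sigma'_0)$ and $1/M_2(\Gamma'_\theta)$, using monotonicity of the module and the conjugacy relation for the quadrilateral $\Omega'$.

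First I would compute $M_2(\Gamma'_\theta)$ by applying Theorem~B to $f(x,t)=(x+t\cot\theta,t)$ on $Q_{1,h\sin\theta}$, noting that $\Gamma'_\theta=f(\Gamma_0)$ for the family $\Gamma_0$ of vertical intervals. Here $c_x(t)=(x+t\cot\theta,t)$, so $\dot c_x=(\cot\theta,1)$, $|\dot c_x|^2=\cot^2\theta+1=\csc^2\theta$, and the Jacobian satisfies $J_f\equiv 1$. Consequently $\ell(x)=\int_0^{h\sin\theta}\csc^2\theta\,dt=h/\sin\theta$ is independent of $x$, and \eqref{Rodinformulat} gives $M_2(\Gamma'_\theta)=\int_0^1\ell^{-1}\,dx=\sin\theta/h$. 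Multiplying this by $M_2(\Sigma'_0)=h\sin\theta$, recalled in Example~\ref{e4}, produces the second identity $M_2(\Gamma'_\theta)M_2(\Sigma'_0)=\sin^2\theta$.

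To prove \eqref{parallelogram2} I would bound $M_2(\Sigma')$ from below and above. Since $\Sigma'_0\subseteq\Sigma'$, monotonicity gives $M_2(\Sigma')\geq M_2(\Sigma'_0)=h\sin\theta$. For the upper bound, observe that $\Gamma'$ (connecting the horizontal sides of $\Omega'$) and $\Sigma'$ (connecting the slanted sides) are the two conjugate families of the parallelogram, so that $M_2(\Gamma')M_2(\Sigma')=1$; combined with $M_2(\Gamma')\geq M_2(\Gamma'_\theta)=\sin\theta/h$ (again by monotonicity, as $\Gamma'_\theta\subseteq\Gamma'$), this yields $M_2(\Sigma')=1/M_2(\Gamma')\leq h/\sin\theta$. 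Hence $h\sin\theta\leq M_2(\Sigma')\leq h/\sin\theta$, and subtracting $M_2(\Sigma'_0)=h\sin\theta$ gives $0\leq M_2(\Sigma')-M_2(\Sigma'_0)\leq h/\sin\theta-h\sin\theta=h\cos^2\theta/\sin\theta$, which is exactly \eqref{parallelogram2}.

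The only genuinely delicate step is the conjugacy identity $M_2(\Gamma')M_2(\Sigma')=1$: because $f$ is affine but not conformal, it cannot be obtained by transporting the rectangle identity through $f$. Instead I would invoke the intrinsic conjugacy of a quadrilateral, mapping $\Omega'$ conformally onto a rectangle so that its horizontal sides correspond to one pair of opposite sides; then $\Gamma'$ and $\Sigma'$ pass to the vertical and horizontal families of that rectangle, whose moduli multiply to $1$, and conformal invariance of $M_2$ transfers this back to $\Omega'$. All remaining steps are the routine evaluation of $\ell(x)$ and the monotonicity of the module.
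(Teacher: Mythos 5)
Your proposal is correct and follows essentially the same route as the paper: compute $M_2(\Gamma'_\theta)=\sin\theta/h$ via Theorem~B, take $M_2(\Sigma'_0)=h\sin\theta$, and trap $M_2(\Sigma')$ between $M_2(\Sigma'_0)$ and $\left(M_2(\Gamma'_\theta)\right)^{-1}$. The only difference is presentational: the paper compresses the upper bound into the phrase ``by monotonicity,'' while you correctly make explicit the conjugacy identity $M_2(\Gamma')M_2(\Sigma')=1$ for the quadrilateral $\Omega'$ (obtained intrinsically via a conformal map to a rectangle, since $f$ itself is not conformal), which is exactly the ingredient the paper is implicitly using from its earlier discussion of conjugate families.
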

\begin{proof} 
Applying Theorem~B we have
$$
M_2(\Gamma'_{\theta})=\int_0^1\dfrac{dx}{\int_0^{h\sin \theta} \dfrac{|f_t|^2}{J_f} dt}=\frac{\sin\theta}{h},\ \    M_2(\Sigma'_0)=\int_0^{h\sin \theta}\dfrac{dy}{\int_0^1 \dfrac{|f_x|^2}{J_f} dx}=h\sin\theta, 
$$
because $|f_t|^2=1+\cot^2\theta,\ J_f=1$ and $|f_x|^2=1.$ This implies (\ref{parallelogram1}).
By the {\it monotonicity property} of the module of a family of curves, we have

$$
M_2(\Sigma'_0)\leq M_2(\Sigma') \leq  (M_2(\Gamma'_{\theta}))^{-1}.
$$
Since  $M_2(\Sigma'_0)=h\sin\theta$, using (\ref{parallelogram1})  we obtain  (\ref{parallelogram2}) .
\end{proof}

\begin{remark}
Of course, the shear map in Example~1 is a quasiconformal homeomorphism with the maximal real dilatation
\[
K(\theta)=1+\frac{1}{2}\cot^2\theta+\frac{1}{2}\cot\theta\sqrt{4+\cot^2\theta}\geq 1.
\]
So the estimate $M_2(\Sigma')\leq Kh\sin\theta$ holds trivially. However, $K(\theta)\geq 1/\sin^2\theta$ and the estimate in Proposition~\ref{parallelogram2} is better for this map.
\end{remark}

Applying Rodin's method, we obtain useful estimates for the  conformal modules of quadrilaterals and ring domains by using the monotonicity property of 2-modules of curves.

The vertices of $Q_{ab}$ are usually assigned a specific order, namely $(0,0)$, $(a,0)$, $(a,b)$ and $(b,0)$, and the segments $D_0$ and $D_1$ are called its $a$-sides, thus these are the sides connecting the first and the second and the third and the fourth vertices.  By a {\it quadrilateral}  $Q$ we understand a conformal image $f(Q_{ab})$ of a rectangle $Q_{ab}$ with a fixed order of vertices, the images of the vertices of $Q_{ab}$ and thus with identified $a$-sides, the images of $D_0$ and $D_1.$ The {\it conformal module of the quadrilateral $Q$} with the above assigned order of vertices and $a$-sides is defined to be 
$$M(Q)=b/a.$$ 
Thus, $M(Q)=M_2(\Sigma)=M(Q_{ab})=b/a$, where $\Sigma=\Sigma(Q_{ab};D_0,D_1)$ is the family of curves separating $D_0$ and $D_1$ in $Q_{ab}$. It is well-known that  $M(Q)$ can be obtained by minimizing the Dirichlet integral $\min \iint\limits_Q|\nabla v|^2 dxdy,$ for a set of admissible for the capacity functions, see Definition~\ref{def:p_capac}, and is equal to the capacity of the condenser  determined by the domain $Q$ with the plates which are the images of the vertical sides of $Q_{ab}$.
 
 The parallelogram $P(\varepsilon)$ with vertices $0, 1, 1+\varepsilon+ib, \varepsilon+ib$  is the image of $Q_{1b}$  under the sheer transformation $f(x,t)=(x+\varepsilon t,t). $  The properties of its conformal module, $M(P_\varepsilon),$ have been extensively studied in many papers. For example, Reich~\cite{Reich1} showed that $M(P_\varepsilon)$ is a convex non-decreasing function of $\varepsilon, $ using Steiner symmetrization. This result was extended in a paper of Dubinin and Vuorinen \cite{DubVuo07} who studied the change of conformal modules of polygonal quadrilaterals under some transformations. Properties of conformal modules of polygonal quadrilaterals, including parallelograms, have been investigated using hypergeometric functions and numerical methods in  \cite{AndVuor, HakVuor, HaRaVuo11, RaVuo06} and many others, see the references therein. 
 
Applying Rodin's result, we obtain  the following inequality that provides an estimate for the rate of convergence of $M(P_\varepsilon)$ to $M(Q_{1b}),$ as $\varepsilon \rightarrow 0.$  
\begin{proposition}
 $$|M(P(\varepsilon))-M(Q_{1b})|\leq \dfrac{\varepsilon^2}b.$$
\end{proposition}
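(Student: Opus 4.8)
The plan is to reproduce the sandwich scheme of Proposition~\ref{inequalityt}: I trap the conformal module $M(P(\varepsilon))=M_2(\Sigma')$ between the moduli of two explicit subfamilies evaluated by Rodin's Theorem~B, and then pass from one bound to the other through the reciprocity of conjugate families in a quadrilateral. Here $f$ is the area-preserving shear carrying $Q_{1b}$ onto $P(\varepsilon)$, so $J_f\equiv 1$; by definition $M(Q_{1b})=b$, and $\Sigma'$ denotes the family separating the (horizontal) $a$-sides of $P(\varepsilon)$, equivalently connecting its two slanted sides, while $\Gamma'$ is the conjugate family connecting the $a$-sides.

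First I fix the two foliations by straight segments. Let $\Sigma'_0=f(\Sigma_0)$ be the images of the horizontal unit segments of $Q_{1b}$; these are parallel segments of length $1$ joining the slanted sides, so $\Sigma'_0\subseteq\Sigma'$. Let $\Gamma'_0=f(\Gamma_0)$ be the images of the vertical segments; since $f$ sends each vertical side of $Q_{1b}$ to a slanted side of length $\sqrt{b^2+\varepsilon^2}$, these are parallel segments of that common length joining the $a$-sides, so $\Gamma'_0\subseteq\Gamma'$.

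Next I evaluate the two moduli with Theorem~B. For $\Sigma'_0$ the speed is $|\dot c|\equiv 1$ over $[0,1]$ and $J_f=1$, whence $\ell\equiv 1$ and $M_2(\Sigma'_0)=b$. For $\Gamma'_0$ the speed is $|\dot c|\equiv\sqrt{b^2+\varepsilon^2}/b$ over $[0,b]$ and $J_f=1$, whence $\ell(x)\equiv (b^2+\varepsilon^2)/b$ and $M_2(\Gamma'_0)=b/(b^2+\varepsilon^2)$. (Both values are just $\mathrm{area}/(\mathrm{length})^2$ for a parallelogram foliated by equal parallel segments, the area being $b$.)

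Finally I close the estimate. Monotonicity of $M_2$ applied to $\Sigma'_0\subseteq\Sigma'$ gives the lower bound $M(P(\varepsilon))=M_2(\Sigma')\geq M_2(\Sigma'_0)=b$. For the upper bound I combine monotonicity $M_2(\Gamma')\geq M_2(\Gamma'_0)$ with the reciprocity $M_2(\Sigma')\,M_2(\Gamma')=1$, valid since $P(\varepsilon)$ is conformally a rectangle, obtaining $M(P(\varepsilon))=1/M_2(\Gamma')\leq 1/M_2(\Gamma'_0)=(b^2+\varepsilon^2)/b=b+\varepsilon^2/b$. Since $M(Q_{1b})=b$, the two bounds $b\leq M(P(\varepsilon))\leq b+\varepsilon^2/b$ give $|M(P(\varepsilon))-M(Q_{1b})|\leq\varepsilon^2/b$ at once. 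The only delicate points are making sure each subfamily lies in the correct member of the conjugate pair so that reciprocity is applied in the right direction, and resisting the temptation to bound $M(P(\varepsilon))$ by a crude quasiconformal distortion factor, which---as the Remark following Proposition~\ref{inequalityt} illustrates---would be weaker than the sharp $\varepsilon^2/b$ obtained through $\Gamma'$.
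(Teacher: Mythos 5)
Your proof is correct and follows essentially the same route as the paper: the paper's proof is declared to be ``in analogy to Proposition~\ref{inequalityt}'', i.e.\ exactly your sandwich $M_2(\Sigma'_0)\leq M_2(\Sigma')\leq \bigl(M_2(\Gamma'_0)\bigr)^{-1}$ obtained from Rodin's Theorem~B, monotonicity, and the reciprocity of conjugate families. You also correctly use the normalization consistent with the stated vertices $0,1,1+\varepsilon+ib,\varepsilon+ib$ (slanted sides of length $\sqrt{b^2+\varepsilon^2}$, i.e.\ shear slope $\varepsilon/b$), which is the reading under which the bound $\varepsilon^2/b$ (rather than $b\varepsilon^2$) actually comes out.
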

The {\it proof} is done in analogy to Proposition \ref{inequalityt}.

 Now we turn to ring domains.
A {\it ring domain} $R$ is a conformal image of the annulus $R_{ab}=\{z\colon a\leq |z|\leq b\}$. The {\it conformal module}  $M(R)$ of $R$  is defined as 
 $$
 M(R)=\dfrac{\log  b/a}{2\pi}.
 $$
Extremal properties of modules of doubly connected regions, or ring domains, have been originally studied in the works of Teichm\"uller and Gr\"otzsch, see \cite{LehtoVirt} for some results and history of the topic. The concept   has played an essential role in the development of the theory of plane quasiconformal mappings and is naturally connected to the $2$-module of families of curves.

 If $\Gamma$ is a family of curves connecting the boundary circles $S_a$ and $S_b$, let $\Sigma$ be the family of curves separating $S_a$ and $S_b$ in  $R_{ab}.$ Denote by  $f(\Gamma)$ and $f(\Sigma)$  their images under a conformal map $f$ that maps $R_{ab}$ onto $R$. One can show that
\[
M_2(f(\Gamma))=M_2(\Gamma)=\frac{2\pi}{\log\, b/a},
\]
and that
\[
M_2(f(\Sigma))M_2(f(\Gamma))=M_2(\Sigma)M_2(\Gamma)=1.
\]
Therefore, $M(R)=M_2(\Sigma)=M_2(f(\Sigma)).$ Thus, the conformal module can be defined as the $2$-module of the family of curves $\Sigma=\Sigma(R_{ab};S_a,S_b)$ separating  the boundaries $S_a$ and $S_b$ in the annulus $R_{ab}$.

We denote by $\Gamma_0$  the family of radial intervals connecting $S_a$ and $S_b$ and by $\Sigma_0$ the family of concentric circles separating $S_a$ and $S_b$. The families of curves $\Sigma_0, f(\Sigma_0), \Gamma_0, f(\Gamma_0,)$ are  extremal families for the corresponding module problems.

Now, let   $f\colon R_{1b}\rightarrow R',$  be an orientation preserving homeomorphism (not necessarily conformal) defined in a neighborhood of  $R_{1b}$.  A point in $R_{1b}$ is called {\it regular} following~\cite{Cazacu1}, if $f$ is differentiable at this point and the Jacobian $J_f$ is  strictly positive.
At a regular point $z=re^{i\theta} \in R_{1b},$ the {\it complex dilatation} of $f$ is $\mu_f=\dfrac{f_{\bar z}}{f_z},$  and the Jacobian is $J_f=|f_z|^2-|f_{\bar z}|^2=|f_z|^2(1-|\mu_f|^2).$

Below we introduce the {\it directional dilatation} of $f$ in the direction $\alpha,$ at a regular point. This concept was studied by Andreian-Cazacu \cite{Cazacu1} for the purpose of generalizing the class of $K$--quasiconformal mappings. 

\begin{definition}\label{directional}
The directional dilatation of $f$  in the direction $\alpha \in \mathbb R,$ is defined~as $$D_{f,\alpha}=\dfrac{|1+e^{-2i\alpha}\mu_f|^2}{1-|\mu_f|^2}.$$ \end{definition}

Denote by $f_r=\dfrac{\partial}{\partial r}f(re^{i\theta})$ and  $f_{\theta}=\dfrac{\partial}{\partial \theta}f(re^{i\theta}).$ Then $f_r= e^{i\theta}(f_z+e^{-2i\theta}f_{\bar z}) $ and $f_\theta= ire^{i\theta}(f_z-e^{-2i\theta}f_{\bar z}).$ Hence,  $D_{f,\theta}=\dfrac{|f_r|^2}{J_f},\ \ \ D_{f,\theta+\frac{\pi}2}=\dfrac{|f_\theta|^2}{r^2J_f}.$ We assume that $0\leq \theta< 2\pi.$

Let $\Gamma_0'$, $\Gamma'$, $\Sigma_0'$, and $\Sigma'$ be the images of  $\Gamma_0$, $\Gamma$, $\Sigma_0$, and $\Sigma$ under $f,$ respectively.
Below we state results concerning modules of  images under $f$ of radial segments and concentric circles in an annulus and the corresponding extremal functions. For simplicity we assume that $f$ is an orientation preserving $C^1$-smooth homeomorphism but the results can be extended, after careful justification, to mappings of exponentially integrable finite distortion and $\mu$-homeomorphisms, which are locally absolutely continuous, see \cite{Brakalova2}.

\begin{proposition}\label{modulesigmaprime} 
The 2-module of the family of curves $\Gamma_0'$ is calculated as
\begin{equation}\label{modulesegments}
%M_2(\Sigma^\prime)= \int_{0}^{2\pi} \left(\int_{a}^{b} \dfrac{|f_r|^2}{J_f}\dfrac{dr}{r}\right)^{-1} d\theta.
M_2(\Gamma_0^\prime)= \int_{0}^{2\pi} \left( \int_1^b \dfrac{D_{f,\theta}}{r} dr  \right)^{-1} d\theta
\end{equation}
The extremal function for $M_2(\Gamma_0^\prime)$ is given by
$$
%\rho_0=|f_r|\left(rJ_f\int_{a}^{b}\dfrac{|f_r|^2}{J_f}\dfrac{dr}{r}\right)^{-1}\circ f^{-1},
\rho_0=\dfrac{\dfrac{D_{f,\theta}}{r}}{|f_r|\int_1^b \dfrac{D_{f,\theta}}{r}dr}\circ f^{-1}.
$$
\end{proposition}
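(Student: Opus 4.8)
The plan is to verify that the candidate $\rho_0$ meets the three hypotheses of Beurling's criterion (Theorem~A), applied with both the ambient family $\Gamma$ and the distinguished subfamily taken equal to $\Gamma_0'$ — this is exactly the proof of Theorem~B, but carried out in polar rather than rectangular coordinates. First I would parametrize $\Gamma_0'$ by the angle: the curve $c_\theta(r)=f(re^{i\theta})$, $r\in[1,b]$, has velocity $f_r$, so its arc-length element is $ds=|f_r|\,dr$, while the area measure transported by $f$ is $dm=J_f\,r\,dr\,d\theta$ (using $J_f>0$ and $dm_z=r\,dr\,d\theta$). Setting $\ell(\theta)=\int_1^b\frac{D_{f,\theta}}{r}\,dr=\int_1^b\frac{|f_r|^2}{J_f\,r}\,dr$ and recalling $D_{f,\theta}=|f_r|^2/J_f$, the candidate reads $\rho_0\circ f=\frac{1}{\ell(\theta)}\frac{|f_r|}{J_f\,r}$, which is the stated function after rewriting $\frac{|f_r|}{J_f\,r}=\frac{1}{|f_r|}\frac{D_{f,\theta}}{r}$.

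Next I would check the equality condition: integrating along $c_\theta$ gives $\int_{c_\theta}\rho_0\,ds=\int_1^b(\rho_0\circ f)\,|f_r|\,dr=\frac{1}{\ell(\theta)}\int_1^b\frac{|f_r|^2}{J_f\,r}\,dr=1$, so $\rho_0$ is admissible (with equality) on every curve of $\Gamma_0'$. For the third, decisive condition, I take any $g\in L^2(R')$ with $\int_{c_\theta}g\,ds\ge 0$ for all $\theta$ and compute $\int_{R'}\rho_0 g\,dm$ by the change of variables $y=f(z)$ and Fubini. The factors conspire: $(\rho_0\circ f)\,(g\circ f)\,J_f\,r=\frac{1}{\ell(\theta)}|f_r|\,(g\circ f)$, so that $\int_{R'}\rho_0 g\,dm=\int_0^{2\pi}\frac{1}{\ell(\theta)}\bigl(\int_1^b(g\circ f)|f_r|\,dr\bigr)d\theta=\int_0^{2\pi}\frac{1}{\ell(\theta)}\bigl(\int_{c_\theta}g\,ds\bigr)d\theta\ge 0$, since $\ell(\theta)>0$ and each inner integral is nonnegative by hypothesis.

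With all three hypotheses in place, Theorem~A gives that $\rho_0$ is extremal and $M_2(\Gamma_0')=\int_{R'}\rho_0^2\,dm$. The final computation is the same change of variables: $(\rho_0\circ f)^2 J_f\,r=\frac{1}{\ell(\theta)^2}\frac{|f_r|^2}{J_f\,r}=\frac{1}{\ell(\theta)^2}\frac{D_{f,\theta}}{r}$, whence $\int_{R'}\rho_0^2\,dm=\int_0^{2\pi}\frac{1}{\ell(\theta)^2}\bigl(\int_1^b\frac{D_{f,\theta}}{r}\,dr\bigr)d\theta=\int_0^{2\pi}\ell(\theta)^{-1}\,d\theta$, which is the asserted formula. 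Equivalently, the whole proposition is Rodin's Theorem~B transported through the conformal map $z=e^{s+i\theta}$, which carries the rectangle $[0,2\pi]\times[0,\log b]$ onto $R_{1b}$ and the vertical segments onto the radial family $\Gamma_0$; the substitution $s=\log r$ produces the weight $dr/r$ together with the factor $D_{f,\theta}$.

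The two changes of variables are routine; the genuine content is the third Beurling condition, which succeeds precisely because $\rho_0$ is built so that the area integral against $g$ factors through the arc-length integrals along the $c_\theta$. The main technical obstacle is instead the regularity hypothesis: the displayed quantities require $J_f>0$ and local integrability of $|f_r|^2/(J_f\,r)$, so to keep everything finite one must either assume every point regular (as in Theorem~B) or restrict the computation to the regular set and argue that the exceptional set carries no module. This is exactly the place where the promised extension to maps of exponentially integrable finite distortion and to $\mu$-homeomorphisms requires the careful justification referenced after the statement.
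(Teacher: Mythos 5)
Your proof is correct and takes essentially the same route as the paper: both verify Beurling's criterion (Theorem~A) with the family and the distinguished subfamily both equal to $\Gamma_0'$, checking $\int_{\gamma_\theta'}\rho_0\,ds=1$ and the sign condition via the polar change of variables $dm=J_f\,r\,dr\,d\theta$, $ds=|f_r|\,dr$ together with Fubini, and then computing $M_2(\Gamma_0')=\int_{R'}\rho_0^2\,dm=\int_0^{2\pi}\ell(\theta)^{-1}\,d\theta$. The only inessential differences are that you verify the third Beurling hypothesis for arbitrary $g\in L^2(R')$ whereas the paper, invoking Remark~\ref{extremalt}, tests only $g=\rho-\rho_0$, and that you add the (accurate) side observation that the whole statement is Theorem~B transported by the conformal map $z=e^{s+i\theta}$.
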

\begin{proof}
For $0\leq\theta<2\pi,$ denote by $\gamma_\theta$ the radial segment connecting the concentric circles $|z|=1$ and $|z|=b$, making angle $\theta$ with the $x$-axis, and by $\gamma_\theta^\prime$ its image under $f$. We show that  $\rho_0$ satisfies the conditions of Theorem~A. Indeed, $\int_{\gamma_\theta'}\rho_0\,ds=1$.
Next, let $\rho$ be any admissible function for $M_2(\Gamma_0^\prime).$ Then  $\int_{\gamma_\theta'}\left( \rho-\rho_0 \right) ds \geq 0,$ and
therefore, $\int_1^b \left( \rho-\rho_0 \right)\circ f |f_r| dr \geq 0.$ 
Hence,
$$
\int_{R^\prime} \rho_0(\rho-\rho_0) \,dm=\int_0^{2\pi} \left( \int_1^b \dfrac{D_{f,\theta}}{r} dr  \right)^{-1}\int_1^b (\rho-\rho_0)\circ f |f_r| \,dr d\theta\geq 0.
$$
By Theorem~A,  $M_2(\Gamma_0^\prime)=\int_{R^\prime}\rho_0^2\,dm.$
A simple calculation leads to (\ref{modulesegments}).
\end{proof}

The theorem below was proved in \cite{Brakalova2}. 
\begin{proposition}\label{modulegammaprime} 
The 2-module of the family of curves $\Sigma_0^\prime$ is calculated as
$$M_2(\Sigma_0^\prime)= \int_{1}^{b} \left( \int_0^{2\pi} D_{f,\theta+\frac{\pi}2} d\theta  \right)^{-1} \dfrac{dr}r,$$
and the extremal function is $$\rho_0=\dfrac{D_{f,\theta+\frac{\pi}2}}{|f_\theta|\int_0^{2\pi}D_{f,\theta+\frac{\pi}2}}\circ f^{-1}.$$
%$M_2(\Gamma^{\prime})=|f_r|\left(J_f\int_{a}^{b}\dfrac{|f_r|^2}{J_f}\dfrac{dr}{r}\right)^{-1}.$
%\rho_0= \int_{a}^{b}\left(\int_{0}^{2\pi} \dfrac{|f_\theta|^2}{J_f}d\theta\right)^{-1}\dfrac{dr}{r} \  \text {and}  \
\end{proposition}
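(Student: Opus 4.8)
The plan is to mirror the proof of Proposition~\ref{modulesigmaprime} almost verbatim, but now foliating the annulus by the \emph{separating} curves $\Sigma_0$ (concentric circles) rather than by the connecting radial segments $\Gamma_0$. Since the claimed extremal function $\rho_0$ is already supplied, the entire task reduces to verifying the three hypotheses of Beurling's Extremality Criterion (Theorem~A) for the family $\Sigma_0'=f(\Sigma_0)$, together with the final integral computation yielding the stated formula for $M_2(\Sigma_0')$.

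First I would fix notation: for each $r\in[1,b]$ let $\sigma_r$ denote the concentric circle $|z|=r$, parametrized by $\theta\in[0,2\pi)$, and let $\sigma_r'=f(\sigma_r)$ be its image, with arc-length element $ds=|f_\theta|\,d\theta$ along $\sigma_r'$. The first step is to check the normalization $\int_{\sigma_r'}\rho_0\,ds=1$ for every $r$. Pulling back via $f$ and using $ds=|f_\theta|\,d\theta$, the factor $|f_\theta|$ cancels the $|f_\theta|$ in the denominator of $\rho_0$, leaving $\int_0^{2\pi}D_{f,\theta+\pi/2}\,d\theta$ divided by the same quantity, which is $1$; here I would invoke the identity $D_{f,\theta+\pi/2}=|f_\theta|^2/(r^2J_f)$ recorded just before Definition~\ref{directional} to keep the bookkeeping clean. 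The second step is admissibility of the competitors: given any $\rho$ admissible for $M_2(\Sigma_0')$, one has $\int_{\sigma_r'}(\rho-\rho_0)\,ds\ge 0$, i.e.\ $\int_0^{2\pi}(\rho-\rho_0)\circ f\,|f_\theta|\,d\theta\ge 0$ for each $r$.

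The third and central step is the orthogonality condition \eqref{rho} with $g=\rho-\rho_0$, as licensed by Remark~\ref{extremalt}. Writing the integral over $R'$ in polar coordinates pulled back through $f$, with area element $dm=J_f\,r\,dr\,d\theta$, I would substitute the explicit $\rho_0$ and arrange the $r$-integral outside and the $\theta$-integral inside, so that the inner integral becomes exactly $\int_0^{2\pi}(\rho-\rho_0)\circ f\,|f_\theta|\,d\theta$ up to a positive $r$-dependent prefactor $\bigl(\int_0^{2\pi}D_{f,\theta+\pi/2}\,d\theta\bigr)^{-1}\tfrac{1}{r}$. Since each inner integral is nonnegative by the previous step and the prefactor is positive, the whole double integral is $\ge 0$, which is precisely \eqref{rho}. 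Theorem~A then gives extremality of $\rho_0$ and the equality $M_2(\Sigma_0')=\int_{R'}\rho_0^2\,dm$. The final computation evaluates this last integral: the $\theta$-integration collapses the squared normalizing constant against one power of $D_{f,\theta+\pi/2}$, producing the reciprocal $\bigl(\int_0^{2\pi}D_{f,\theta+\pi/2}\,d\theta\bigr)^{-1}$, and the radial measure assembles into $\tfrac{dr}{r}$, giving the asserted formula.

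The step I expect to require the most care is the change-of-variables bookkeeping in the third step: one must correctly pair the Jacobian $J_f$ from the area element with the $|f_\theta|^{-1}$ and $D_{f,\theta+\pi/2}$ factors in $\rho_0$ so that the inner integrand reduces exactly to $(\rho-\rho_0)\circ f\,|f_\theta|$, and one must confirm that the surviving prefactor is manifestly positive so the sign of the double integral is controlled. Because the proposition is stated for a $C^1$-smooth orientation-preserving homeomorphism with $J_f>0$, the change of variables and Fubini are unproblematic, so the obstacle is purely one of organizing the polar computation rather than any analytic subtlety; the authors indicate this result is established in \cite{Brakalova2}, so the argument is essentially dual to Proposition~\ref{modulesigmaprime}.
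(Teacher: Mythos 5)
Your proof is correct, and the key computations check out: in the orthogonality step the Jacobian $J_f\,r$ from the area element combines with $D_{f,\theta+\frac{\pi}{2}}/|f_\theta| = |f_\theta|/(r^2J_f)$ to leave exactly $(\rho-\rho_0)\circ f\,|f_\theta|$ under the inner integral with the positive prefactor $\bigl(r\int_0^{2\pi}D_{f,\theta+\frac{\pi}{2}}\,d\theta\bigr)^{-1}$, and the final integral of $\rho_0^2$ collapses to the stated formula. The paper itself gives no proof of this proposition, deferring to \cite{Brakalova2}, but your argument is precisely the dual of the paper's proof of Proposition~\ref{modulesigmaprime} (verification of Beurling's criterion with $g=\rho-\rho_0$ via Remark~\ref{extremalt}, followed by the polar-coordinate evaluation of $\int_{R'}\rho_0^2\,dm$), which is evidently the intended route.
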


From  the monotonicity property, Propositions \ref{modulesigmaprime} and \ref{modulegammaprime}, we have  $M_2(\Sigma_0^\prime)\leq M_2(\Sigma^\prime)\leq \left(M_2(\Gamma_0^\prime)\right)^{-1}$. Denote by $M(R^\prime)$ the conformal module of $R'.$ Since $M(R^\prime)=M_2(\Sigma^\prime),$ this leads to the following useful estimate for $M(R^\prime),$ in terms of directional dilatations.

\begin{theorem}\label{upperlower} For the conformal module $M(R^\prime)$ of $R'$  we have
$$
\int_{1}^{b} \left( \int_0^{2\pi} D_{f,\theta+\frac{\pi}2} d\theta  \right)^{-1} \dfrac{dr}r\leq M(R^\prime)\leq \left( \int_{0}^{2\pi} \left( \int_1^b \dfrac{D_{f,\theta}}{r} dr  \right)^{-1} d\theta \right)^{-1}.
$$
%or equivalently
%$$
%\int_{a}^{b}\left(\int_{0}^{2\pi} \dfrac{|f_\theta|^2}{J_f}d\theta\right)^{-1}\dfrac{dr}{r}\leq M(R_{ab}^\prime)\leq \left(\int_{0}^{2\pi}.
%\left(\int_{a}^{b} \dfrac{|f_r|^2}{J_f}\frac{dr}{r}\right)^{-1} d\theta\right)^{-1}.
%$$
\end{theorem}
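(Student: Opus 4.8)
The plan is to obtain the two-sided estimate as an immediate consequence of the monotonicity of the $2$-module together with the conjugacy relation between connecting and separating families, and then to insert the closed-form expressions already computed in Propositions~\ref{modulesigmaprime} and~\ref{modulegammaprime}. I would begin by recording the identity $M(R')=M_2(\Sigma')$: this holds because $R'$ is a ring domain and the $2$-module of its family of separating sets is a conformal invariant coinciding with the conformal module, as noted in the discussion preceding Definition~\ref{directional}.

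For the left-hand (lower) bound, I would observe that $\Sigma_0'\subseteq\Sigma'$, since every image under $f$ of a concentric circle separating $S_a$ and $S_b$ is in particular a separating set for the two boundary components of $R'$. Monotonicity of $M_2$ then yields $M_2(\Sigma_0')\leq M_2(\Sigma')=M(R')$, and substituting the formula for $M_2(\Sigma_0')$ from Proposition~\ref{modulegammaprime} produces the left inequality of the theorem.

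For the right-hand (upper) bound the key ingredient is the conjugacy relation $M_2(\Sigma')\,M_2(\Gamma')=1$ for the full separating and connecting families in the doubly connected domain $R'$. The step I expect to require the most care is justifying this identity in our setting: although $f$ is only an orientation-preserving $C^1$-homeomorphism and not conformal, the relation is an intrinsic conformal invariant of $R'$ itself, so it need not be inherited from any conformality of $f$. Since $R'$ is doubly connected with nondegenerate boundary, it is conformally equivalent to a standard annulus, and under such a conformal map the families $\Sigma'$, $\Gamma'$ transform into the corresponding full families on the annulus, for which the classical identity holds; conformal invariance of $M_2$ then transports it back to $R'$.

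Granting this, I would apply monotonicity to the inclusion $\Gamma_0'\subseteq\Gamma'$ (every image of a radial segment joining $S_a$ and $S_b$ is a connecting curve in $R'$) to obtain $M_2(\Gamma_0')\leq M_2(\Gamma')$, whence
\[
M(R')=M_2(\Sigma')=\bigl(M_2(\Gamma')\bigr)^{-1}\leq\bigl(M_2(\Gamma_0')\bigr)^{-1}.
\]
Inserting the expression for $M_2(\Gamma_0')$ from Proposition~\ref{modulesigmaprime} gives the right inequality, and the two bounds together are precisely the assertion. The whole argument is thus short once the conjugacy relation is secured; the only genuinely delicate point is emphasizing that it rests on the conformal geometry of the target ring domain rather than on any regularity of $f$ beyond its being a homeomorphism onto $R'$.
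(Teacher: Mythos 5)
Your proof is correct and takes essentially the same route as the paper: the paper likewise obtains the chain $M_2(\Sigma_0')\leq M_2(\Sigma')\leq \left(M_2(\Gamma_0')\right)^{-1}$ from monotonicity together with the conjugacy relation $M_2(\Sigma')M_2(\Gamma')=1$, identifies $M(R')=M_2(\Sigma')$, and then inserts the formulas of Propositions~\ref{modulesigmaprime} and~\ref{modulegammaprime}. Your explicit justification of the conjugacy identity intrinsically for $R'$ (via conformal equivalence of the doubly connected domain to a standard annulus, independent of any regularity of $f$) is a detail the paper leaves implicit, and it is the right way to secure that step.
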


Observe that the upper and lower estimates are  conformally invariant. Applying the Cauchy-Schwarz inequality leads to
\begin{equation}\label{ringdomainestimaes}
\int_{1}^{b}\left(\int_{0}^{2\pi} D_{f,\theta+\frac{\pi}2} d\theta \right)^{-1}\dfrac{dr}{r}\leq M(R^\prime)\leq \dfrac1{(2\pi)^2} \int_{R_{1b}} \dfrac{D_{f,\theta}}{|z|^2} \, dm.
\end{equation}

Such estimates have been used in e.g., \cite{BrakJenk1, BrakJenk2, Brakalova1, Brakalova2, GuMa03, Lehto73} and others. To obtain (\ref{ringdomainestimaes}) directly, one usually applies the length-area method, which involves  integrating over curves and using Fubini's theorem, see Volkovyskii \cite{Vo50}, Reich and Walczak \cite{Reich}, Gutlyanskii and Martio \cite{GuMa03} and others  (the history and  equivalence of such estimates  under the assumption that $f$ is quasiconformal, have been discussed in \cite{Brakalova0}). 
%Such estimates do not reveal the remarkable connection  they have with the exact module of the curve families images $\Sigma^\prime$ and $\Gamma^\prime$.

Now we derive expressions for the modules of a family of logarithmic spirals and their images under a smooth homeomorphism $f$ in terms of directional dilatations.
Let $\beta\in \mathbb R.$  Consider $f\colon R_{1b}\rightarrow R_{1b},$ $f(re^{i\theta})=|z|e^{i(-\beta\log |z|+\theta)},$ which maps the radial segments in $R_{1b}$  into portions of logarithmic spirals in $R_{1b}$ of inclination $\beta$. Denote by $\Gamma_{\beta}=\bigcup_{\theta\in [0,2\pi)} \{z: z=r e^{i(-\beta\log r +\theta)}, 1<r<b\}$ and by  $\Gamma'_{\beta}$ its $f$-image.  Applying previous results and using Rodin's theorem, we obtain the following useful for the applications result.

\begin{proposition}
The $2$-module of the family of logarithmic spirals $\Gamma_{\beta}$ is 
\begin{equation}\label{logspirals}
M_2(\Gamma_{\beta})=\dfrac{2\pi}{(1+\beta^2)\log b.}
\end{equation}
Also 
$$
M_2(\Gamma'_{\beta})=\int_0^{2\pi}\left(  \int_1^b (1+\beta^2) D_{f,\theta_0} \dfrac{dr}{r}\right)^{-1}d\theta,
$$
where $\theta_0=-\beta\log r+\theta+\arctan\beta$.
\end{proposition}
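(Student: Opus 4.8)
The plan is to compute the two modules in the proposition separately, first handling $M_2(\Gamma_\beta)$ directly and then obtaining $M_2(\Gamma'_\beta)$ by composing the logarithmic-spiral map with $f$ and invoking Rodin's theorem (Theorem~B) together with Proposition~\ref{modulesigmaprime}.

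For the first formula \eqref{logspirals}, I would parametrize a single spiral $\gamma_\theta$ in $\Gamma_\beta$ by the radial variable, writing $z(r)=r\,e^{i(-\beta\log r+\theta)}$ for $1<r<b$. The arc-length element is $ds=|z'(r)|\,dr$, and since $z'(r)=e^{i(-\beta\log r+\theta)}(1-i\beta)$ one gets $|z'(r)|=\sqrt{1+\beta^2}$, so $ds=\sqrt{1+\beta^2}\,dr$. Observe that $g(r e^{i\theta})=r e^{i(-\beta\log r+\theta)}$ is a conformal map of $R_{1b}$ onto itself (it rotates each circle by an $r$-dependent angle but preserves $|z|$), so $\Gamma_\beta=g(\Gamma_0)$ where $\Gamma_0$ is the family of radial segments. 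Hence I can apply Theorem~B with $f=g$: the Jacobian of $g$ is $J_g=1/r\cdot r=1$ after accounting for polar coordinates, and a short computation of $\ell(\theta)=\int_1^b |\dot c_\theta|^2/J_g\,dr$ using $|\dot c_\theta|^2=1+\beta^2$ and the measure factor yields $\ell(\theta)=(1+\beta^2)\log b$ independent of $\theta$. Then \eqref{Rodinformulat} gives $M_2(\Gamma_\beta)=\int_0^{2\pi}\ell^{-1}\,d\theta=2\pi/((1+\beta^2)\log b)$, matching \eqref{logspirals}. (Alternatively, since $g$ is conformal one could argue $M_2(\Gamma_\beta)=M_2(\Gamma_0)$ directly, but the Rodin computation is cleaner for bookkeeping the metric.)

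For the second formula I would treat $\Gamma'_\beta=f(\Gamma_\beta)=(f\circ g)(\Gamma_0)$ and apply Proposition~\ref{modulesigmaprime} (or equivalently Theorem~B) to the composite homeomorphism $F=f\circ g$. The key point is that along a spiral, the relevant direction is no longer the radial direction $\theta$ but the tangent direction of the spiral, which makes an angle $\arctan\beta$ with the radial direction. Thus the directional dilatation that enters is $D_{F,\alpha}$ evaluated in the spiral-tangent direction, and by the chain rule for dilatations under the conformal pre-composition $g$, this reduces to $D_{f,\theta_0}$ with $\theta_0=-\beta\log r+\theta+\arctan\beta$ — the first two terms account for the rotation induced by $g$ and the $\arctan\beta$ term for the inclination of the spiral. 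The factor $(1+\beta^2)$ appears because the spiral arc-length carries the factor $\sqrt{1+\beta^2}$ from the first part, which gets squared inside the integrand $|\dot c|^2/J_F$ of the $\ell$-computation. Assembling these, the effective length functional is $\int_1^b (1+\beta^2) D_{f,\theta_0}\,dr/r$, and the module formula \eqref{Rodinformulat} then gives the stated integral over $\theta$.

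The main obstacle I anticipate is the careful tracking of how the directional dilatation transforms under the composition $f\circ g$ and the correct identification of the direction angle $\theta_0$. Because $g$ is conformal, it should only rotate the direction in which the dilatation of $f$ is measured, sending the radial direction at $re^{i\theta}$ to the spiral-tangent direction; verifying that this rotation is exactly by $-\beta\log r+\arctan\beta$ (relative to the base angle $\theta$) requires computing $dg$ and composing with $df$, keeping the $(1+\beta^2)$ scaling consistent with the arc-length normalization used in the first part. Once the chain rule for $D_{F,\alpha}$ in terms of $D_{f,\cdot}$ and the conformal factor of $g$ is established, the rest is a substitution into the formula of Proposition~\ref{modulesigmaprime}. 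I would state the dilatation-composition identity as a short lemma or inline computation before plugging into Rodin's formula.
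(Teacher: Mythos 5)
Your overall route coincides with the paper's: for \eqref{logspirals} you apply Rodin's theorem (equivalently Proposition~\ref{modulesigmaprime}) to the spiral parametrization, getting $\ell(\theta)=(1+\beta^2)\log b$ and hence $M_2(\Gamma_\beta)=2\pi/((1+\beta^2)\log b)$; for the second formula you apply the same theorem to the composition $F=f\circ g$ and reduce the radial dilatation of $F$ to $D_{f,\theta_0}$. This is exactly what the paper does: it considers $g=f(re^{i(-\beta\log r+\theta)})$, computes $g_r=f_zw_r+f_{\bar z}\overline{w_r}$ and $J_g=rJ_f$, and reads off the directional dilatation from Definition~\ref{directional}. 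So the computational skeleton is the same and the formulas you assemble are the right ones.

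However, your repeated assertion that the spiral map $g(re^{i\theta})=re^{i(-\beta\log r+\theta)}$ is conformal is false, and it matters. Writing $g(z)=z\,e^{-i\beta\log|z|}=z^{1-i\beta/2}\,\bar z^{-i\beta/2}$, one finds $\mu_g=\frac{-i\beta/2}{1-i\beta/2}\cdot\frac{z}{\bar z}\neq 0$ for $\beta\neq 0$; in logarithmic coordinates $g$ is a shear, quasiconformal but not conformal. Your parenthetical alternative for the first part --- ``since $g$ is conformal one could argue $M_2(\Gamma_\beta)=M_2(\Gamma_0)$'' --- would give $2\pi/\log b$ by conformal invariance, contradicting the very formula \eqref{logspirals} you are proving: the factor $(1+\beta^2)$ in the denominator is precisely the failure of conformality. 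Likewise, in the second part the rotation of the direction cannot be justified by ``the chain rule for dilatations under conformal pre-composition'': a genuinely conformal pre-composition would only rotate the direction and contribute no extra factor, so your separate insertion of $(1+\beta^2)$ ``from arc-length'' is internally inconsistent with that justification. What is true, and what rescues your computation, is the general identity
$D_{f\circ g,\theta}=D_{g,\theta}\,\bigl(D_{f,\arg g_r}\circ g\bigr)$,
obtained from $F_r=f_z g_r+f_{\bar z}\overline{g_r}$, $J_F=(J_f\circ g)\,J_g$, and $J_f=|f_z|^2(1-|\mu_f|^2)$; here $D_{g,\theta}=|g_r|^2/J_g=1+\beta^2$ is the radial dilatation of the non-conformal spiral map, and $\arg g_r$ is the spiral-tangent direction $\theta_0$ of the statement. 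This single chain-rule computation --- the one the paper actually performs --- produces both the angle $\theta_0$ and the factor $1+\beta^2$ at once; you should replace the conformality claims by it.
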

\begin{proof}
Let $z=re^{i\theta}$ represent the polar coordinates of $z$ in the annulus $R_{1b}$. Then $f_r=e^{i(-\beta\log r+\theta)} (1-i\beta).$ Thus $|g_r|^2=(1+\beta^2).$ In addition $J_f=1.$ From Proposition \ref{modulesigmaprime}, it follows that the extremal function for  $M_2(\Gamma_{\beta})$ is

$$\rho_0\circ f(re^{i\theta})=\dfrac{\sqrt{1+\beta^2}}{r\int_1^b(1+\beta^2)\dfrac{dr}{r}}=\dfrac1{\sqrt{1+\beta^2}\left( r\log b\right)}.$$

Thus $$M_2(\Gamma_{\beta})=\int_{R_{1b}} \rho_0^2 dm=\int_1^b \int_0^{2\pi} \dfrac1{1+\beta^2}\dfrac{1}{(r\log  b)^2}rdrd\theta=\dfrac{2\pi}{(1+\beta^2)\log b},$$
which proves (\ref{logspirals}).

 Consider the mapping $g=f(re^{i(-\beta\log r +\theta)})$ of the annulus $R_{1b}$ in the polar coordinates $Q=\{(r,\theta):  1\leq r\leq b, 0\leq\theta<2\pi \}$ on $Q^\prime. $ Then $M_2(\Gamma'_{\beta})=\int_0^{2\pi} \left( \int_1^b \dfrac{|g_r|^2}{J_g} dr  \right)^{-1}d\theta. $ Since $|g_r|^2=|f_z+f_{\bar z} e^{i\theta_0}|^2,$ $J_g=rJ_f$, and by Definition~\ref{directional}, we obtain (\ref{logspirals}).
\end{proof}

Since $M_2(\Sigma')\leq \left( M_2(\Gamma'_\beta)\right)^{-1},$ after applying the Cauchy Schwarz inequality one obtains   estimates for $M_2(\Sigma'),$ which depend on $\beta$  :
$$\label{upperestimate}
M_2(\Sigma')\leq \dfrac{1+\beta^2}{(2\pi)^2} \int_{R_{1b}} D_{f,\theta_0}\dfrac{dm}{|z|^2}. 
$$ 

Since $M_2(\Sigma')=M(R^\prime)$ the above inequality provides a  conformally invariant upper estimate for $M(R^\prime),$ which is more general than the one obtained in (\ref{ringdomainestimaes}). After simple calculations, it can be shown that it is equivalent to the one obtained in~\cite[Corollary 3.4]{Brakalova2}.

%HERE ENDS THE NEW PLANAR SECTION%%%%%%%%%%%%%%%%%%%%%%%%%%%%%%%%%%%

\section{Rodin's theorem in Euclidean spaces}\label{RodinS}

\subsection{Module of a system of measures in  Euclidean spaces} \label{measures}
%%%%%%%%%%%%%%%%%%%%%%%%%%%%%%%%%%
The module of a systems of measures, introduced  in Definition \ref{FugledeM}, has the following properties, based on ~\cite[Chapter 1]{Fug}:
\begin{proposition}\label{modulusprop}
Let $(X,\mathfrak{M},m)$ be an abstract measure space where $m$ is a fixed measure defined on a subalgebra of  $\mathfrak{M}$. 
Denote by $\mathcal{M}$ the system of all measures $\mu$ in $X$, whose 
domains of definition contain $\mathfrak{M}$ and by $\bar{\mu}$ the completion of the measure $\mu$.
Then the following properties hold
\begin{enumerate}
\item $M_p(E)\leqslant M_p(E')$ if $E\subset E'$ and $E,E'\subset \mathcal M$;
\item $M_p(E)\leqslant\sum\limits_{i=1}^{\infty}M_p(E_i)$ if $E=\bigcup\limits_{i=1}^{\infty}E_i$, and $E_i\subset \mathcal M$;
\item If $A\subset X$ and $\bar{m}(A)=0$, then $\bar{\mu}(A)=0$ for $M_p$-a.a. 
$\mu\in\mathcal{M}$;
\item If $\rho\in L^p(X,\bar{m})$, then $\rho$ is $\bar{\mu}$-integrable for $M_p$-a.a. $\mu\in\mathcal{M}$;
\item If $\|\rho_i-\rho\|_{L^p(X,\bar{m})}\rightarrow 0$, then there is a subsequence $\rho_{i_j}$, such that
\[
\int_{X}|\rho_{i_j}-\rho|\,d\bar{\mu}\rightarrow0 \quad \text{for $M_p$-a.e. $\mu\in\mathcal{M}$};
\]
\item Let $E\subset\mathcal{M}$. Then $M_p(E)=0$, if and only if, there exists a non-negative function $\rho\in L^p(X,m)$,
such that
\[
\int_{X}\rho\,d\mu=+\infty \qquad \text{for every $\mu\in E$};
\]
\item If $p>1$ and $E\subset\mathcal{M}\setminus \{\mu\equiv 0\}$, then there exists a non-negative function $\rho$, such that
\[
\int_{X}\rho^p\,dm=M_p(E),\quad\text{and $\int_X\,\rho\,d\mu\geqslant 1$ for $M_p$-a.e. $\mu\in E$};
\]
\item For the measures, which are restrictions of the Hausdorff measure to compact sets, the following 
is true:
 if $p\geqslant 2$, $E_1\subset E_2\subset\cdots$ are sets of complete measures and $E=\bigcup E_i$, then 
\[
 M_p(E)=\lim\limits_{i\to\infty}M_p(E_i).
 \]
\end{enumerate}
\end{proposition}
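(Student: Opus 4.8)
The plan is to establish the eight properties essentially in the order stated, since several later items rest on the earlier ones, reserving the genuine analytic work for items (7) and (8). Monotonicity (1) is immediate from the definition: if $E\subset E'$, then every $\rho$ with $\rho\wedge E'$ also satisfies $\rho\wedge E$, so the admissible class for $E$ is the larger one and its infimum is no larger. For countable subadditivity (2) I would use the standard $\ell^p$-gluing trick: given $\varepsilon>0$, choose $\rho_i\wedge E_i$ with $\int_X\rho_i^p\,dm\le M_p(E_i)+\varepsilon 2^{-i}$ and set $\rho=\left(\sum_i\rho_i^p\right)^{1/p}$. Then $\rho\ge\rho_i$ pointwise, so $\rho\wedge E$ (any $\mu\in E$ lies in some $E_i$), while $\int_X\rho^p\,dm=\sum_i\int_X\rho_i^p\,dm\le\sum_iM_p(E_i)+\varepsilon$; letting $\varepsilon\to0$ gives (2).

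For the module-zero cluster (3)--(6) I would prove (6) first and deduce the rest. For the nontrivial (``only if'') direction of (6), if $M_p(E)=0$ pick $\rho_n\wedge E$ with $\|\rho_n\|_{L^p(X,m)}\le 2^{-n}$ and put $\rho=\sum_n\rho_n$; then $\rho\in L^p$ and $\int_X\rho\,d\mu=\sum_n\int_X\rho_n\,d\mu=+\infty$ for every $\mu\in E$. The converse is a scaling remark: if such $\rho$ exists, then $\rho/c\wedge E$ for all $c>0$, whence $M_p(E)\le c^{-p}\|\rho\|_{L^p}^p\to0$. The same scaling yields (4) directly, since $\{\mu:\int\rho\,d\bar\mu=+\infty\}$ is admissible via $\rho/c$ and so is module-null. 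Property (3) is the case $\rho=n\mathbf 1_A$: as $\bar m(A)=0$, the family $\{\mu:\bar\mu(A)\ge1/n\}$ has module $\le n^p\bar m(A)=0$, and applying (2) over $n$ shows $\{\mu:\bar\mu(A)>0\}$ is module-null. Finally (5) follows from (4): after passing to a subsequence with $\|\rho_{i_j}-\rho\|_{L^p}\le2^{-j}$, the function $G=\sum_j|\rho_{i_j}-\rho|$ lies in $L^p$, hence is $\bar\mu$-integrable for $M_p$-a.a.\ $\mu$, and for each such $\mu$ the tail of $\sum_j\int|\rho_{i_j}-\rho|\,d\bar\mu$ vanishes.

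Existence of an extremal function (7) is the analytic heart. The admissible class is convex, so I would take a minimizing sequence $\rho_n\wedge E$ with $\int_X\rho_n^p\,dm\to M_p(E)$ and exploit the uniform convexity of $L^p$ for $1<p<\infty$. Since $(\rho_n+\rho_k)/2$ is again admissible, its $p$-th power norm is $\ge M_p(E)$; feeding this into Clarkson's inequalities (the first for $p\ge2$, the second for $1<p<2$) forces $\|\rho_n-\rho_k\|_{L^p}\to0$, so $\rho_n\to\rho_0$ in $L^p$ with $\|\rho_0\|_{L^p}^p=M_p(E)$. It remains to see $\rho_0$ is admissible off a module-null set: by (5) a subsequence satisfies $\int|\rho_{n_j}-\rho_0|\,d\bar\mu\to0$ for $M_p$-a.a.\ $\mu$, hence $\int\rho_0\,d\mu=\lim_j\int\rho_{n_j}\,d\mu\ge1$ for $M_p$-a.a.\ $\mu\in E$. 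I expect this uniform-convexity step to be the main obstacle, both in invoking the correct Clarkson inequality in the two ranges of $p$ and in the passage to $M_p$-a.e.\ admissibility of the limit.

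Continuity from below (8) assembles everything. With $E_1\subset E_2\subset\cdots$ and $E=\bigcup E_i$, monotonicity gives $L:=\lim_i M_p(E_i)\le M_p(E)$, and the task is the reverse inequality (assume $L<\infty$). I would take the extremal functions $\rho_i$ furnished by (7), which are bounded in $L^p$, and rerun the uniform-convexity argument: for $i<k$, $\rho_k$ is admissible for $E_k\supseteq E_i$ up to a module-null set, so $(\rho_i+\rho_k)/2$ is admissible for $E_i$ up to such a set, and Clarkson's first inequality ($p\ge2$) yields $\|\rho_i-\rho_k\|_{L^p}^p\le \tfrac12\left(M_p(E_k)-M_p(E_i)\right)\to0$. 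Thus $\rho_i\to\rho_0$ in $L^p$ with $\|\rho_0\|_{L^p}^p=L$. Applying (5) on each $E_i$ and collecting the countably many module-null exceptional sets through (2) shows $\rho_0$ is admissible for $M_p$-a.a.\ $\mu\in E$, so $M_p(E)\le\|\rho_0\|_{L^p}^p=L$. Here the hypotheses $p\ge2$ and the restriction to Hausdorff measures on compacts are precisely what make the Clarkson step and the completions $\bar\mu$ behave, and the delicate bookkeeping is the management of the exceptional sets across the increasing family.
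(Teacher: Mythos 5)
Your proposal is correct and is essentially the same as the paper's: the paper offers no proof of this proposition, deferring to Fuglede's Chapter~I, and your arguments (the gluing construction for countable subadditivity, the scaling and series tricks for the module-null items (3)--(6), the $L^p$-completeness subsequence argument for (5), and the Clarkson uniform-convexity arguments for (7) and (8)) are precisely the classical proofs that the citation invokes. Apart from trivial bookkeeping (a factor $2^{p-1}$ dropped in your Clarkson step for (8), replacing $A$ by an $\mathfrak{M}$-measurable null superset in (3), and the degenerate case of an empty admissible class in (7)), there is nothing to fix.
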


 Badger, see \cite{Badger}, extended Beurling's criterion  (Theorem A)  to   Fuglede's $p$-module of systems of measures, as a sufficient and necessary condition. Let $X=\mathbb R^n$, let $\mathfrak M$ be a Borel $\sigma$-algebra in the topology defined by the Euclidean metric, and let $m$ be the Lebesgue measure.

\begin{theorem}[\cite{Badger}] \label{Badgertheorem} Let $E$ be a measure system in $\mathbb{R}^n$ and let $\rho$ be an admissible function for $E$ such that 
$\rho\in L^p(\mathbb{R}^n)$, $1<p<\infty$. Then $\rho_0$ is the extremal function for the $p$-module of $E$ if and only if, there exists a measure system $F$ such that
\begin{itemize}
\item $M_p(F\cup E)=M_p(E)$;
\item $\int_{\mathbb R^n} \rho_0 \,d\nu=1$ for every $\nu\in F$;
\item For all real valued   functions $g\in L^p(\mathbb{R}^n)$ the condition $\int_{\mathbb R^n}  g \,d\nu \geq 0,$ for all $\nu \in F,$ implies
$\int_{\mathbb R^n} g \rho_0^{p-1} \,dm\geq 0$.
\end{itemize}
\end{theorem}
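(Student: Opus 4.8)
The plan is to recognize Theorem~\ref{Badgertheorem} as the first-order optimality characterization for the convex minimization problem defining $M_p(E)$: one minimizes the strictly convex functional $\Phi(\rho)=\int_{\mathbb R^n}\rho^p\,dm$ over the convex set of nonnegative $\rho\in L^p(\mathbb R^n)$ with $\rho\wedge E$. Everything should reduce to the variational inequality
\[
\int_{\mathbb R^n}\rho_0^{p-1}(\rho-\rho_0)\,dm\ge 0\qquad\text{for every admissible }\rho,
\]
which, by strict convexity of $\Phi$ for $1<p<\infty$, is equivalent to $\rho_0$ being the minimizer. Note that $\rho_0\in L^p$ forces $\rho_0^{p-1}\in L^{q}$ with $q=p/(p-1)$, so all the pairings below are finite.

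For \emph{sufficiency}, suppose a system $F$ with the three listed properties exists. The key observation is that the first bullet lets us shrink the admissible class without changing the infimum: since $M_p(F\cup E)=M_p(E)$ and $\rho_0$ is admissible for $F\cup E$ (admissible for $E$ by hypothesis and, by the second bullet, $\int\rho_0\,d\nu=1$ on $F$), it suffices to prove $\int\rho_0^p\,dm\le\int\rho^p\,dm$ for every $\rho$ admissible for $F\cup E$; taking the infimum then yields $\int\rho_0^p\,dm\le M_p(F\cup E)=M_p(E)$, while $\rho_0\wedge E$ gives the reverse inequality. For such $\rho$ I would set $g=\rho-\rho_0$; admissibility of $\rho$ on $F$ together with $\int\rho_0\,d\nu=1$ gives $\int g\,d\nu\ge 0$ for all $\nu\in F$, so the third bullet produces $\int g\,\rho_0^{p-1}\,dm\ge 0$, that is $\int\rho_0^{p}\,dm\le\int\rho\,\rho_0^{p-1}\,dm$. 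Applying Hölder's inequality with exponents $p$ and $q$, exactly as in Remark~\ref{extremalt} but with Hölder replacing Cauchy--Schwarz, gives $\|\rho_0\|_p^{p}\le\|\rho\|_p\,\|\rho_0\|_p^{p-1}$ and hence $\|\rho_0\|_p\le\|\rho\|_p$.

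For \emph{necessity}, assume $\rho_0$ is extremal. For any admissible $\rho$ the segment $\rho_t=\rho_0+t(\rho-\rho_0)$, $t\in[0,1]$, is admissible because the constraints $\rho\wedge E$ are convex, so $t\mapsto\Phi(\rho_t)$ is minimized at $t=0$; differentiating at $t=0^{+}$ yields the displayed variational inequality. Reading it as the statement that the functional $g\mapsto\int g\,\rho_0^{p-1}\,dm$ is nonnegative on the cone of feasible directions $\rho-\rho_0$, I would then define $F$ to be the family of \emph{active} measures, those $\mu$ with $\int\rho_0\,d\mu=1$, which immediately gives the second bullet, and observe that adjoining active constraints to $E$ keeps $\rho_0$ admissible-with-equality and extremal, so that $M_p(F\cup E)=M_p(E)$ (first bullet). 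The third bullet is the content: via a Hahn--Banach/bipolar separation argument in the duality $(L^p,L^q)$ one must show that $\rho_0^{p-1}\,dm$ lies in the weak-$*$ closed convex cone generated by $\{d\nu:\nu\in F\}$, which is precisely the polarity statement.

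The hard part will be this necessity direction: converting the variational inequality into the existence of $F$ is an infinite-dimensional Karush--Kuhn--Tucker/separation step, and it is delicate on two counts. First, the feasible-direction cone also carries the pointwise sign constraint coming from $\rho\ge 0$, so one must verify that the cone generated by the active measures is large enough to capture $\rho_0^{p-1}\,dm$ while remaining closed, which is exactly where the bipolar theorem and the choice of $F$ interact. Second, because $E$ is an arbitrary measure system and the admissibility conditions $\int\rho\,d\mu\ge 1$ need only hold $M_p$-almost everywhere, one must track module-zero exceptional families throughout, using Proposition~\ref{modulusprop}, items~(3) and~(6). The sufficiency direction, by contrast, is the routine Hölder argument modeled on Remark~\ref{extremalt}.
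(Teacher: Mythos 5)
Your sufficiency argument is correct and is the standard one: take $g=\rho-\rho_0$ for $\rho$ admissible for $F\cup E$, apply the third bullet, then H\"older with exponents $p$ and $q$ (this is exactly Remark~\ref{extremaltB} of the paper with H\"older in place of Cauchy--Schwarz). Note that the paper itself offers no proof of this theorem --- it is quoted from Badger's paper --- so the only question is whether your argument is complete on its own. It is not: the necessity direction has a genuine gap. You reduce it to showing that $\rho_0^{p-1}\,dm$ lies in the weak-$*$ closed convex cone generated by your family $F$ of active measures, announce that this requires a Hahn--Banach/bipolar separation step, and then explicitly leave that step open, flagging it as delicate. As written, the ``only if'' half is therefore not proved.

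The irony is that no separation argument is needed, because the cone in question contains $\rho_0^{p-1}\,dm$ for a trivial reason: in the nondegenerate case $0<\int_{\mathbb R^n}\rho_0^p\,dm<\infty$, the normalized measure $d\nu_0=\rho_0^{p-1}\,dm\big/\int_{\mathbb R^n}\rho_0^p\,dm$ is itself an active measure, since $\int_{\mathbb R^n}\rho_0\,d\nu_0=1$. So one may simply take $F=\{\nu_0\}$: the second bullet holds by construction; the third bullet is tautological, because $\int_{\mathbb R^n} g\,d\nu_0\geq 0$ \emph{is} the statement $\int_{\mathbb R^n} g\,\rho_0^{p-1}\,dm\geq 0$ up to a positive constant; and the first bullet follows because $\rho_0$ is admissible for $F\cup E$, whence
\[
M_p(E)\leq M_p(F\cup E)\leq \int_{\mathbb R^n}\rho_0^p\,dm=M_p(E)
\]
by monotonicity (Proposition~\ref{modulusprop}, item (1)) and extremality of $\rho_0$. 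Equivalently, keeping your own choice of $F$ as the family of all active measures, you only had to observe that $\nu_0\in F$; then the polarity claim you wanted from the bipolar theorem is immediate. This one-line construction is how the cited result is actually proved; your variational inequality and the infinite-dimensional KKT machinery are not needed, and the closedness and module-zero issues you worry about never arise.
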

\noindent
Analogous theorem holds for $p=1$.

\begin{remark}\label{extremaltB} 
In the same manner, as in the plane, one can see from the proof of Theorem~\ref{Badgertheorem}, that we only need to consider $g=\rho-\rho_0$ to get extremality for $\rho_0$ and  the systems of measures $E$.
\end{remark}

\subsection{Rodin's theorem for families of connecting curves in a condenser}

Here we state an extension of Rodin's result~\cite[Theorem 14]{Rodin} to the $p$-module of families of curves in $\mathbb R^n,$ connecting the two plates $D_0$ and $D_1$ of a condenser $(\Omega;D_0,D_1),$ introduced at the beginning of Section \ref{introduction}.

Recall that for any Borel set $B$, we use the Hausdorff measure defined
by $H^d(B)=\lim\limits_{\delta\to 0} H_{\delta}^{d}(B)$, where
\[
H_{\delta}^{d}(B)=\inf\{\sum_{i=1}^{\infty}(\text{diam}\, U_i)^d\colon \text{$U_i$ are open, }\bigcup_{i=1}^{\infty}U_i\supseteq B,\text{diam}\, U_i<\delta\}.
\]
We observe that the $n$-dimensional Lebesgue measure coincides with the $H^n$-Hausdorff measure in $\mathbb R^n$ for all $n$, up to a constant which we choose to~be~1.
The number $d$ is called the Hausdorff dimension of $B.$

Let $D \subset  \mathbb R^{n} $ be an $(n-1)$-Hausdorff dimensional compact and let $({x},t)\mapsto \mathbf{u}({x},t),$  where ${x}\in D$ and $ t\in [a,b]$, be an embedding that parametrizes 
a condenser $(\Omega;D_0,D_1)$  in $\mathbb R^{n},$  so that $D_0= \mathbf{u}({x},a)$ and $D_1= \mathbf{u}({x},b)$.  We assume that $\mathbf{u}=(u_1,\dots, u_n)$ is a $C^1$-smooth homeomorphism in $D\times [a,b]$ with a positive Jacobian $J_{\mathbf{u}}.$ Let $f$ be a $C^1$-smooth homeomorphism  of $\Omega$ onto  $\Omega'\subset \mathbb R^{n}$, such that the Jacobian $J_f$  is  strictly positive  in $\Omega$. 
 Denote by $I_f=J_{\mathbf{u}}J_f$.
 
 Let $\Gamma_0$
be the family of  curves $$v_x(t)=\{\mathbf{u}({x},t)\colon t\in [a,b], \,\text{${x}\in D$ is fixed}\},$$ and let  $c_x(t)=f(v_x(t))\subset \Omega'$. Then 
\[
f(\Gamma_0)=\{c_x\colon [a,b]\stackrel{\mathbf{u}}{\longrightarrow}\Omega\stackrel{f}{\longrightarrow} \Omega',\,x\in D\}. 
\]
We write $\dot{c}_x=\frac{\partial}{\partial t}c_x$.

Our generalization of Rodin's theorem is as follows.

\begin{theorem}\label{Rodin1}
Set $1/p+1/q=1$, $p,q>1$, and let
\[
\ell(x)=\int_a^b\left(\frac{|\dot{c}_x|}{I_f}\right)^{q}I_f\,dt,\quad x\in D.
\]
Then 
\[
\rho_0(y)=\frac{1}{\ell(x)}\left(\frac{|\dot{c}_x|}{I_f}\right)^{\frac{1}{p-1}}\circ f^{-1},\quad y=f\circ \mathbf u(x,t)\in \Omega',
\]
is the extremal function for the $p$-module $M_{p}(f(\Gamma_0))$ of the family $f(\Gamma_0)$, where
$$M_{p}(f(\Gamma_0))=\int_{\Omega'}\rho_0^p\,dy=\int_{D} \ell^{1-p}\,dH^{n-1}(x).$$
$f(\Gamma_0)$ is the complete extremal family of curves for $M_{p}(f(\Gamma_0)).$
\end{theorem}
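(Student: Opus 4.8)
The plan is to verify that the candidate $\rho_0$ satisfies the hypotheses of the $p$-module extremality criterion (Theorem~\ref{Badgertheorem}, in the simplified form of Remark~\ref{extremaltB}), applied to the measure system $E=f(\Gamma_0)$ itself playing the role of both $E$ and $F$. This reduces to three tasks: showing $\rho_0$ is admissible with $\int_{c_x}\rho_0\,ds=1$ for every curve, verifying the orthogonality inequality $\int_{\Omega'}g\,\rho_0^{p-1}\,dm\ge 0$ for $g=\rho-\rho_0$, and then computing $\int_{\Omega'}\rho_0^p\,dm$ to arrive at the stated formula. Throughout I would work in the parameter coordinates $(x,t)\in D\times[a,b]$ and transport integrals to $\Omega'$ via the composite map $f\circ\mathbf u$.

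First I would compute the line integral of $\rho_0$ along a fixed curve $c_x$. Since $c_x(t)=f(\mathbf u(x,t))$, the arc-length element is $ds=|\dot c_x|\,dt$, and by the very definition of $\rho_0$ and $\ell(x)$,
\[
\int_{c_x}\rho_0\,ds=\int_a^b \frac{1}{\ell(x)}\left(\frac{|\dot c_x|}{I_f}\right)^{\frac{1}{p-1}}|\dot c_x|\,dt
=\frac{1}{\ell(x)}\int_a^b \left(\frac{|\dot c_x|}{I_f}\right)^{q}I_f\,dt=1,
\]
using $\tfrac{1}{p-1}=q-1$ so that $(\cdot)^{q-1}|\dot c_x|=(\cdot)^q I_f$. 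This simultaneously gives admissibility ($\int_{c_x}\rho_0\,ds=1\ge1$) and the exact equality needed for the second bullet of Theorem~\ref{Badgertheorem}. The key computational input here is the coarea/change-of-variable identity from Section~\ref{coareasection} that lets me write the Lebesgue integral over $\Omega'$ as an iterated integral: $dm=J_f\,d(\text{Lebesgue on }\Omega)$ and then $d(\text{Lebesgue on }\Omega)=J_{\mathbf u}\,dH^{n-1}(x)\,dt$, so that $dm = I_f\,dt\,dH^{n-1}(x)$ in these coordinates.

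Next comes the orthogonality step, which I expect to be the main obstacle. Given an admissible $\rho$, set $g=\rho-\rho_0$; admissibility of $\rho$ together with $\int_{c_x}\rho_0\,ds=1$ gives $\int_{c_x}g\,ds\ge0$ for every $x\in D$, i.e.\ $\int_a^b g\circ(f\circ\mathbf u)\,|\dot c_x|\,dt\ge0$. I must deduce $\int_{\Omega'}g\,\rho_0^{p-1}\,dm\ge0$. Pushing the left side into parameter coordinates and using $dm=I_f\,dt\,dH^{n-1}$, the integrand $\rho_0^{p-1}I_f$ factors as $\ell(x)^{-(p-1)}\bigl(|\dot c_x|/I_f\bigr)I_f=\ell(x)^{-(p-1)}|\dot c_x|$. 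Hence
\[
\int_{\Omega'}g\,\rho_0^{p-1}\,dm=\int_{D}\ell(x)^{-(p-1)}\left(\int_a^b g\circ(f\circ\mathbf u)\,|\dot c_x|\,dt\right)dH^{n-1}(x)\ge0,
\]
since the inner integral is nonnegative and the weight $\ell(x)^{-(p-1)}$ is positive. The delicate points are justifying the change of variables for the not-necessarily-smooth competitor $\rho$ (so that Fubini and the coarea formula apply and the fibers $\{x\}\times[a,b]$ carry well-defined line integrals for $M_p$-a.e.\ $x$), which is exactly what Proposition~\ref{modulusprop}, items~(3)--(5), guarantees.

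Finally I would evaluate the module. Using the same coordinate identity, $\rho_0^p I_f=\ell(x)^{-p}\bigl(|\dot c_x|/I_f\bigr)^{q}I_f$, whose $t$-integral over $[a,b]$ is exactly $\ell(x)^{-p}\cdot\ell(x)=\ell(x)^{1-p}$, so
\[
M_p(f(\Gamma_0))=\int_{\Omega'}\rho_0^p\,dm=\int_D \ell(x)^{1-p}\,dH^{n-1}(x).
\]
By Theorem~\ref{Badgertheorem} this proves $\rho_0$ extremal and yields the formula. For the closing assertion that $f(\Gamma_0)$ is the \emph{complete} extremal family, I would invoke the definition from Section~\ref{introduction}: we have shown $\int_{c_x}\rho_0\,ds=1$ for \emph{every} $x\in D$ (not merely $M_p$-a.e.), and since the system $F$ witnessing extremality can be taken to be all of $E=f(\Gamma_0)$, completeness is immediate.
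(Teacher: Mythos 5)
Your proposal is correct and follows essentially the same route as the paper's own proof: verify $\int_{c_x}\rho_0\,ds=1$ via the exponent identity $\tfrac1{p-1}+1=q$, transport $\int_{\Omega'}(\rho-\rho_0)\rho_0^{p-1}\,dm$ to the parameter coordinates using $dm=I_f\,dt\,dH^{n-1}$ to get the orthogonality inequality, invoke Remark~\ref{extremaltB}, and compute $\int_{\Omega'}\rho_0^p\,dm=\int_D\ell^{1-p}\,dH^{n-1}$. The only cosmetic difference is that you attribute the coordinate identity to the coarea machinery of Section~\ref{coareasection}, whereas here only the ordinary change-of-variables/Fubini for the $C^1$ diffeomorphism $f\circ\mathbf u$ is needed (the coarea tools are reserved for Theorem~\ref{Rodin2}); this does not affect the argument.
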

 
 The proof of the above theorem is given in Section \ref{proofRodin1}. 
 
 The only result known to us, that computes explicitly the weighted $p$-module of a family of curves in $R^n$ and provides the almost extremal function for it, belongs to Ohtsuka, see Theorem 3.4.3 in \cite{Ohtsuka}. The author studies a family of curves in a tube, which are trajectories of a solenoidal vector field. It is clear that, while considering different settings, Ohtsuka's result and Theorem \ref{Rodin1}, should lead to equivalent formulas, when the settings overlap.
 
Observe that  Theorem~\ref{Rodin1} applies to the following special cases. 

\begin{itemize}
\item $\Omega=\{t\in [a,b],\, x\in D\}$ is a cylinder  in $\mathbb R^{n}$, where $D$ is a compact set in $\mathbb R^{n-1}$ and $\mathbf u\equiv$id. Then, $I_f=J_f$. 
\item A spherical ring domain $\Omega=R_{ab}$ in $\mathbb R^n$ bounded by the concentric spheres $S_a$ and $S_b$  of radii $a$ and $b$ respectively.
Then $D=S_1$ is the unit sphere in $\mathbb R^n$, $\mathbf u(x,t)=tx$, and $I_f=t^{n-1}J_f$.
\item A conical cylinder $\Omega=\{(\beta tx,t):\,\,x\in D\subset \mathbb R^{n-1},\,t\in[a,b],\,\beta>0\}$. Then $I_f=(\beta t)^{n-1}J_f$
\end{itemize}

%%%%%%%%%%%%%%%%%%%%%%%%%%%%%%%%%%

%%%%%%%%%%RODIN'S THEOREM FOR FAMILIES OF SEPARATING SETS%%%%%%%%%%%%%%%%%%%%%%%%
\subsection{Modules of families of separating sets in a condenser}\label{sep_sets}

Let us define separating sets in a topological sense and describe the associated system of measures. Let  $D_0$ and $D_1$ be disjoint compact sets in the closure $\overline\Omega$ of a bounded open set $\Omega \subset \mathbb R^n$.  We denote $\Omega^*=\Omega\cup D_0\cup D_1$.

\begin{definition}\label{separating set0}
\index{separating set0}
We say that a set $\sigma$ separates $D_0$ from $D_1$ in $\Omega$ if 
\begin{itemize}
\item $\sigma\cap\Omega$ is closed in 
$\Omega$;
\item There are disjoint sets $U_1$ and $U_2$ which are open in $\Omega^*\setminus\sigma$ such that 
$\Omega^*\setminus\sigma=U_1\cup U_2$, $D_0\subset U_1$ and $D_1\subset U_2$.
\end{itemize}
\end{definition}

Let $\Sigma$ denote the class of all sets that separate $D_0$ from $D_1$. With every $\sigma\subset\Sigma$ we associate a complete measure $\mu$ in the following way: for every Hausdorff $H^{n-1}$-measurable set $A\subset \mathbb R^n$ we define 
\[
 \mu(A)={H}^{n-1}(A\cap\sigma\cap\Omega).
\]
From the properties of Hausdorff measure, it is clear that the Borel sets in $\mathbb R^n$ (here $\sigma\cap\Omega$ is closed in $\Omega$, and therefore, is Borel) are 
$\mu$-measurable, hence the module $M_q(\Sigma)$ of $\Sigma$  is the module of a family of measures $E$ in Definition~\ref{FugledeM}. We will use the two notations notations $M_q(E)$ and $M_q(\Sigma)$ interchangeably.

The next theorem states an analogue to Theorem~\ref{Rodin1}  for separating sets. 
Let $(\Omega;D_0,D_1)$ be a condenser in $\mathbb R^{n}$ defined as previously.
We define $\Sigma_0$ to be a family of  sets  $\sigma_t=\{\mathbf{u}(x,t): \,\,x\in D,\,t\in [a,b]\text{ is fixed}\}$ that separate the plates  $D_0=\mathbf{u}(x,a)$ and $D_1=\mathbf{u}(x,b)$. Assume that $f$ is $C^1$-smooth orientation preserving homeomorphism $\Omega\to \Omega'\subset \mathbb R^n$. 
 
 \begin{theorem}\label{Rodin2}
 Let $\frac{1}{p}+\frac{1}{q}=1$, and let
 \[
 \ell(t)=\int_{\sigma_t}\left(|(\nabla  (t(f^{-1}))\circ f(\mathbf{u}(x,t))|\right)^{p}I_f\,d\mu.
 \]
 Then,
 \[
 \rho_0=\frac{1}{\ell( t(f^{-1})}\left(|(\nabla ( t(f^{-1}))|\right)^{1/(q-1)}
 \]
 is the extremal metric for the $q$-module of $\Sigma'_0=f(\Sigma_0)$, and the $q$-module is equal to $M_{q}(\Sigma'_0)=\int_{\Omega'}\rho_0^q\,dy=\int_a^b \ell^{1-q}dt$.
 \end{theorem}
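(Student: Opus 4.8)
The plan is to establish Theorem~\ref{Rodin2} by verifying the hypotheses of Badger's criterion (Theorem~\ref{Badgertheorem}) for the candidate function $\rho_0$, in exact parallel with the proof of Theorem~\ref{Rodin1} but now transported to the foliation by the separating slices $\sigma_t=\mathbf{u}(\cdot,t)$ rather than the connecting curves $c_x$. The key structural observation is that the scalar parameter that labels the leaves of $\Sigma_0$ is precisely the last coordinate $t$, which on the image side is the function $t(f^{-1})(y)$, a smooth function on $\Omega'$ whose level sets are exactly the separated slices $\sigma'_t=f(\sigma_t)$. The measure $\mu$ associated with $\sigma'_t$ is the restriction of $H^{n-1}$ to that slice, so the integral of a metric $\rho$ against $\mu$ is a surface integral over $\sigma'_t$, and admissibility $\rho\wedge\Sigma'_0$ reads $\int_{\sigma'_t}\rho\,d H^{n-1}\ge 1$ for (almost) every $t$. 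The whole argument is the $q$-dual mirror image of the $p$-argument in Theorem~\ref{Rodin1}: there one integrated along curves and minimized over the transverse parameter $x\in D$; here one integrates over transverse slices and minimizes over the longitudinal parameter $t\in[a,b]$.

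First I would record the coarea/change-of-variable identities from Section~\ref{coareasection} that let me write an integral over $\Omega'$ as an iterated integral $\int_a^b\!\!\int_{\sigma'_t}(\cdots)\,dH^{n-1}\,dt$ against the correct Jacobian weight $I_f=J_{\mathbf u}J_f$; this is what makes $\int_{\Omega'}\rho_0^q\,dy=\int_a^b\ell^{1-q}\,dt$ come out, and it is where the factor $|\nabla(t(f^{-1}))|$ enters, since the coarea formula for the level function $t(f^{-1})$ produces exactly $|\nabla(t(f^{-1}))|$ as the density relating the ambient measure to the product of the slice measure and $dt$. Second I would verify admissibility, $\int_{\sigma'_t}\rho_0\,d\mu=1$ for each $t$, which follows directly from the definition of $\ell(t)$ and the exponent bookkeeping $1/(q-1)=p-1$ together with $1/p+1/q=1$; this simultaneously discharges the first two bullets of Theorem~\ref{Badgertheorem} with the choice $F=\Sigma'_0$. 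Third I would check the orthogonality condition: for real $g\in L^q(\Omega')$ with $\int_{\sigma'_t}g\,d\mu\ge 0$ for all $t$, invoking Remark~\ref{extremaltB} it suffices to take $g=\rho-\rho_0$ for admissible $\rho$, and then
\[
\int_{\Omega'}g\,\rho_0^{q-1}\,dy=\int_a^b\frac{1}{\ell(t)^{q-1}}\left(\int_{\sigma'_t}g\,d\mu\right)dt\ge 0,
\]
where the inner integral is nonnegative because $\rho$ is admissible and $\rho_0$ integrates to $1$ on each slice; here the crucial point is that $\rho_0^{q-1}$ is, up to the per-leaf constant $\ell(t)^{-(q-1)}$, exactly the Jacobian density that converts $dy$ into $dH^{n-1}\,dt$, so that $\rho_0^{q-1}\,dy$ factors through the slices.

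The main obstacle, and the step that needs genuine care rather than bookkeeping, is the rigorous justification of the slicing identity and the measurability of the fibered integrals: one must know that $t(f^{-1})$ is a legitimate Lipschitz (indeed $C^1$) function with nonvanishing gradient off a negligible set so that the coarea formula applies, that the slice measure $\mu$ on $\sigma'_t$ genuinely coincides with $H^{n-1}$ restricted to the level set for a.e.\ $t$, and that the system $\Sigma'_0$ exhausts $M_q$-almost every separating measure so that the completeness claim ($f(\Sigma_0)$ is the complete extremal system) holds. Establishing that the candidate $\rho_0$ lies in $L^q(\Omega')$ and that Badger's criterion may legitimately be applied with $F=\Sigma'_0$ completes the argument; the final formula $M_q(\Sigma'_0)=\int_a^b\ell^{1-q}\,dt$ then falls out of computing $\int_{\Omega'}\rho_0^q\,dy$ via the same slicing, exactly as in Theorem~\ref{Rodin1}.
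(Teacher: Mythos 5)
Your proposal is correct and follows essentially the same route as the paper: verify admissibility of $\rho_0$ on each slice via the coarea/change-of-variables machinery of Lemma~\ref{ChVar}, reduce Badger's criterion to $g=\rho-\rho_0$ via Remark~\ref{extremaltB}, and exploit the identity $|\nabla(t(f^{-1}))|=\ell^{q-1}\rho_0^{q-1}$ (your ``Jacobian density'' observation) to show $\int_{\Omega'}(\rho-\rho_0)\rho_0^{q-1}\,dy\geq 0$, after which the module formula follows by the same slicing. The only cosmetic difference is that you slice directly on $\Omega'$ by the level sets of $t(f^{-1})$, while the paper pulls each slice integral back to $\sigma_t$ in the parametrizing domain and applies Fubini there; these are the same computation.
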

\noindent 
%%%%%%%%%%%%%%%%%%%%%%

\medskip

The condition of the regularity for the mapping $f$ and the set $D$ can be relaxed according to the discussion in Section~\ref{coareasection}.

\medskip

Let us give simple but illustrative Examples \ref{ex33} and \ref{ex44} of calculation of the extremal functions and families for
the module of families of connecting curves and separating sets in the cylinder   $\Omega=D\times [a,b]$. Then, Examples \ref{e3} and \ref{e7} illustrate Theorem~\ref{Rodin1} for a cylinder and for a spherical ring domain.

\begin{example}\label{ex33} Let $\Gamma$ be a family of all locally rectifiable curves connecting the base sides $D_0$ and $D_1$ of the cylinder $\Omega$, and let $\Gamma_0\subset \Gamma$
be the family of  intervals $$v_x(t)=\{(x,t)\colon t\in [a,b], \,\text{$x\in D$ is fixed}\}.$$  
%Let us calculate explicitly the $p$-module of $\Gamma$  in $\Omega$. 
Observe that the function $\rho_0(x)=\frac{1}{b-a}$, $x\in \Omega$ is extremal for $\Gamma$ and
%\[
%\int_{\Omega}\rho_0^p\,dx=\frac{1}{(b-a)^p}H^{n-1}(D)(b-a)=\frac{H^{n-1}(D)}{(b-a)^{p-1}}.
%\]
 %Let $\rho$ be an arbitrary admissible function for $\Gamma$. We use the H\"older inequality and  Fubini's theorem
%\begin{eqnarray*}
%\int_{\Omega}\rho^p\,dx&=&\frac{1}{(b-a)^{p/q}}\int_{D}\left(\left(\int_a^bdt\right)^{1/q}\left(\int _a^b\rho^p dt\right)^{1/p}\right)^p \,dH^{n-1}\\&\geq &
%\frac{1}{(b-a)^{p/q}}\int_{D} \left(\int _a^b\rho dt\right)^p\,dH^{n-1}\geq \frac{H^{n-1}(D)}{(b-a)^{p/q}}=\frac{H^{n-1}(D)}{(b-a)^{p-1}}.
%\end{eqnarray*}
 $$M_p(\Gamma)=M_p(\Gamma_0)=\frac{H^{n-1}(D)}{(b-a)^{p-1}}.$$ 
 %It is easy to see that $\rho_0$ is the extremal function for both $\Gamma$ and $\Gamma_0\subset \Gamma$ is the extremal family. The function $\rho_0$ and the family $\Gamma_0$ satisfy all the conditions of  Beurling-Badger's criterion.
\end{example}
 
\begin{example}\label{ex44} 
Let $\Sigma$ be a family of Borel  sets $\sigma$ separating the base sides $D_0=\sigma_a$ and $D_1=\sigma_b$ of the cylinder $\Omega$, and let $\Sigma_0\subset \Sigma$ 
be the family of  sets $$\sigma_t=\{(x,t)\colon x\in D, \,\text{$t\in [a,b]$ is fixed}\}.$$ 
By $E$ we denote the family $H^{n-1}$-Hausdorff
 measures in $\Omega$ associated with  $\sigma\in \Sigma$ and $E_0\subset E$ are  $H^{n-1}$-Hausdorff measures associated with $\sigma_t\in \Sigma_0$.  
 It is easy to see that $\varrho_0=\frac{1}{H^{n-1}(D)}$ is the extremal function for both $\Sigma$ and $\Sigma_0\subset \Sigma$, satisfies all other conditions of  Theorem~\ref{Badgertheorem} and
 % We calculate the $q$-module of $\Sigma$  in $\Omega$ analogously to the previous example using the H\"older inequality and  Fubini's theorem to be
\[
 M_q(\Sigma)=\frac{b-a}{(H^{n-1}(D))^{q-1}}.
\] 
The classical equality $(M_p(\Gamma))^q(M_q(\Sigma))^p=1$ holds.
\end{example}

\begin{example}\label{e3} Now let us calculate the module in the image $\Omega'$ of $\Omega$ with $a=0$, $b=r$, under a `horizontal' shear transform $f$ given by 
$$f(x,t)=(x_1+\beta t,x_2,\dots,x_{n-1},t).$$
Let $\Gamma_0$ be as in Theorem~\ref{Rodin1} and $\Gamma_0'=f(\Gamma_0)$. Then the Jacobian  $J_f=1$  and the norm of the derivative is $|\dot{c}_x|=\sqrt{1+\beta^2}$. Besides, $\ell=(1+\beta^2)^{q/2}r$, 
$\rho_0=\frac{1}{r \sqrt{1+\beta^2}}$, and the $p$-module becomes 
\[
M_p(\Gamma'_0)=\frac{H^{n-1}(D)}{(1+\beta^2)^{p/2}r^{p-1}}.
\]

At the same time, let us calculate the $q$-module in Theorem~\ref{Rodin2} under the same transform. Obviously,  $\ell=\mu(D)$ and
\[
M_q(\Sigma_0')=\frac{r}{(H^{n-1}(D))^{q-1}}.
\]
We see that $(M_p(\Gamma'_0))^q(M_q(\Sigma'_0))^p\neq 1$, and the function $\rho_0=\frac{1}{r \sqrt{1+\beta^2}}$ is no longer admissible for a larger family $\Gamma'=f(\Gamma)$   while other conditions of Beurling-Badger's criterion applied to the families $\Gamma'$ and $\Gamma_0'$ hold. The statements on $\Sigma$, $\Sigma_0$ and $\Sigma'$, $\Sigma'_0$ remain as in Example~\ref{ex44} and in some cases can give interesting estimates as in Example~\ref{e4}. At the same time, using monotonicity of modules we can derive
estimates for the module of the whole family of curves connecting $D_0$ and $D_1$ as
\[
\frac{H^{n-1}(D)}{(1+\beta^2)^{p/2}r^{p-1}}=M_p(\Gamma'_0)\leq M_p(\Gamma')=(M_q(\Sigma'))^{p/q}\leq (M_q(\Sigma'_0))^{p/q}=\frac{H^{n-1}(D)}{r^{p-1}}.
\]
\end{example}

 \begin{example}\label{e7} Let us first give known expressions for the $p$-module of the family of locally rectifiable curves $\Gamma$ connecting $S_a$ and $S_b$ in the spherical ring domain $R_{ab}$ bounded by $S_a$ and $S_b$. The function
 \[
 \rho_0(x)=
 \begin{cases}
 \frac{|p-n|}{p-1}\left| b^{\frac{p-n}{p-1}}-a^{\frac{p-n}{p-1}}\right|^{-1}\left|\nabla |x|^{\frac{p-n}{p-1}}\right|,& \text{for $p\neq n$},\\
 (\log b/a)^{-1}|\nabla \log|x||,& \text{for $p= n$},
 \end{cases}
 \]
 is admissible for $\Gamma$ and $\int_{\gamma\in \Gamma_0}\rho_0\,ds=1$.  Besides, 
 \[
M_p(\Gamma)= \int_{R_{ab}}\rho_0^p\,dy= \begin{cases}
 \left(\frac{|p-n|}{p-1}\right)^{p-1}\left| b^{\frac{p-n}{p-1}}-a^{\frac{p-n}{p-1}}\right|^{1-p}\omega(S_1),& \text{for $p\neq n$},\\
 (\log b/a)^{1-n}\omega(S_1),& \text{for $p= n$}.
 \end{cases}
 \]
 We will prove analogous formulas in details  later for a more general case of polarizable groups, see also \cite{Geh62, Shlyk90, zim66}.
 
 Let us consider a spherical ring $R_{1r}$ bounded by the unit sphere $S_1$ and a sphere $S_r$ of radius $r,$ $1<r<\pi$. Recall that $R_{1r}$ is given by  the spherical transformation $G\colon  (\theta_1,\dots,\theta_{n-1},t)\to (x_1,\dots,x_n)$, where $\theta_1\in [0,2\pi)$, $\theta_k\in [0,\pi]$, $k=2,3,\dots, n-1$,  $t\in [1,r]$, and   
 \begin{eqnarray*}
 x_1 &=& t\sin \theta_1\sin\theta_2\dots\sin \theta_{n-1},\\
 x_2 &=& t\cos \theta_1\sin\theta_2\dots\sin \theta_{n-1},\\
 x_3 &=& t\cos \theta_2\sin\theta_3\dots\sin \theta_{n-1},\\
 \dots &\dots&\dots\\
 x_n &=& t\cos \theta_{n-1}.
 \end{eqnarray*}
 The Jacobian $J_{G}$ of the spherical transformation $G$
 is 
 \[
 J_{G}(\theta_1,\dots,\theta_{n-1},t)=t^{n-1}\omega=t^{n-1}\sin \theta_2\sin^2\theta_3\dots\sin^{n-2} \theta_{n-1}.
 \]
The line element $ds$ is computed as
 \begin{eqnarray*}
 ds^2=dt^2&+&t^2(d\theta_{n-1}^2+\sin^2\theta_{n-1}\,d\theta_{n-2}^2+\sin^2\theta_{n-1}\sin^2\theta_{n-2}\,d\theta_{n-3}^2\\
 &+&\dots +\sin^2\theta_{n-1}\sin^2\theta_{n-2}\dots \sin^2\theta_{2}\,d\theta_{1}^2).
 \end{eqnarray*}
Let us define the twisting map  $f$ of $R_{1r}$ by the shear transform
$$G^{-1}\circ f\circ G\colon (\theta_1,\theta_2,\dots,\theta_{n-1},t)\to ((\theta_1+t-1),\theta_2,\dots,\theta_{n-1},t),$$ i.e., the boundary
sphere $S_1$ remains unchanged while the  spheres $S_t$ rotate by an angle $t-1$, $t\in (1,r]$ by the element of SO$(n)$
\[
\left(
\begin{array}{rrrrrr} \cos(t-1) & \sin(t-1) & 0 & 0 & \dots & 0 \\ 
-\sin(t-1) & \cos(t-1) & 0 & 0& \dots & 0 \\ 
0 & 0 & 1 & 0 & \dots & 0\\
\dots & \dots &  \dots  & \ddots & \dots & \dots  \\  
0 & 0 & 0 & 0 & \dots & 1
\end{array}
\right).
\]
Observe that $J_f=J_{G^{-1}\circ f\circ G}=1$, and the radial intervals from $\Gamma_0$ are mapped onto the curves $c(t)$ with $|\dot{c}|=\sqrt{1+t^2}$. Theorem~\ref{Rodin1} implies that
\[
\ell(x)=\int_1^r\left(1+t^2\right)^{q/2}I_f^{1-q}\,dt=\int_1^r\left(1+t^2\right)^{q/2}t^{(n-1)(1-q)}\,dt=:K,
\]
and the $p$-module of $f(\Gamma_0)$ is
\[
M_p(f(\Gamma_0))=\int_{S_1} \ell^{1-p}dx=K^{1-p}\int_{S_1} dx=K^{1-p}\omega(S_1).
\]
Having in mind the previous example and the monotonicity of the module, we come to correct
inequalities for the following integrals
\[
\int_1^r\left(1+t^2\right)^{\frac{p}{2(p-1)}}t^{\frac{n-1}{1-p}}\,dt\geq  \frac{p-1}{|p-n|}\left( r^{\frac{p-n}{p-1}}-1\right),\quad p> 1, \quad p\neq n,
\]
and
\[
\int_1^r\frac{\left(1+t^2\right)^{\frac{n}{2(n-1)}}}{t}\,dt\geq  \log r,\quad p= n\geq 2.
\]
 \end{example}
Analogously we can define an automorphism $f$ of $R_{1r}$ by the twisting map
$$G^{-1}\circ f\circ G\colon (\theta_1,\theta_2,\dots,\theta_{n-1},t)\to ((\theta_1+\beta\log t),\theta_2,\dots,\theta_{n-1},t),$$ i.e., the boundary
sphere $S_1$ remains unchanged while the  spheres $S_t$ rotate by an angle $\beta\log t$, $t\in (1,r]$. Then the radial intervals from $\Gamma_0$ are mapped onto the curves $c(t)$ with $|\dot{c}|=\sqrt{1+\beta^2}=$const, and
\[
\ell(x)\equiv
\begin{cases}
(1+\beta^2)^{\frac{q}{2}}\,\,{\displaystyle \frac{r^{\frac{p-n}{p-1}}-1}{p-n}(p-1)}, & \text{for $p\neq n$},\\
(1+\beta^2)^{\frac{n}{2(n-1)}}\log r, & \text{if $q=\frac{n}{n-1}$ and $p=n$}.
\end{cases}
\]
Hence,
\[
M_p(f(\Gamma_0))=
\begin{cases}
\frac{1}{(1+\beta^2)^{p/2}} \left(\frac{|p-n|}{p-1}\right)^{p-1}\left| r^{\frac{p-n}{p-1}}-1\right|^{1-p}\omega(S_1),& \text{for $p\neq n$},\\
\frac{\omega(S_1)}{(1+\beta^2)^{n/2}(\log r)^{n-1}}, & \text{if $q=\frac{n}{n-1}$ and $p=n$}.
\end{cases}
\]
In particular, if $p=2$ and $n=2$, then
$
M_2(f(\Gamma_0))=\frac{2\pi}{(1+\beta^2)\log r}.
$ 
This, naturally, agrees with (\ref{logspirals}).

 \subsection{Proof of Theorem \ref{Rodin1} and \ref{Rodin2}}
 \subsubsection{Proof of Theorem \ref{Rodin1}}\label{proofRodin1}
 \begin{proof}
Let us first observe that $\int_{c_x}\rho_0\,ds=1$. Indeed,
\begin{eqnarray*}
\int_{c_x}\rho_0\,ds&=&\int_a^b(\rho_0\circ f\circ \mathbf u)|\dot{c}_x|\,dt=\frac{1}{\ell}\int_a^b\left(\frac{|\dot{c}_x|}{I_f}\right)^{\frac{1}{p-1}}|\dot{c}_x|\,dt\\
&=&\frac{1}{\ell}\int_a^b\left(\frac{|\dot{c}_x|}{I_f}\right)^{q}I_f\,dt=1,\quad \text{for all}\quad x\in D.
\end{eqnarray*}
Therefore, $\rho_0$ is admissible for $f(\Gamma_0)$ and 
\begin{equation}\label{modd1}
M_p(f(\Gamma_0))\leq \int_{\Omega'}\rho_0^p\,dy.
\end{equation}
On the other hand,
for any $\rho$ admissible for $f(\Gamma_0),$ we have $\int_{c_x}\rho\, ds \geq 1$, and therefore
\[
\int_{c_x}(\rho-\rho_0)\, ds\geq 0.
\]
This implies that
\[
\frac{1}{\ell^{p-1}(x)}\int_a^b [(\rho-\rho_0)\circ f\circ \mathbf u\,]\,|\dot{c}_x|\,dt\geq 0.
\]
Then
\[
\int_{D} \int_a^b \left((\rho-\rho_0)\rho^{p-1}_0\circ f\circ \mathbf u\right)\,I_f\, dt\,dH^{n-1}_x\geq 0.
\]
Equivalently,
\[
\int_{\Omega'}\rho\rho_0^{p-1}\,dy\geq \int_{\Omega'}\rho_0^{p}\,dy.
\]
Thus we can apply the Remark~\ref{extremaltB},  to conclude that $\rho_0$ is extremal for the module problem in consideration and that
$
M_p(f(\Gamma_0))=\int_{\Omega'}\rho_0^{p}\,dy.
$
Now we can calculate the $p$-module as
\begin{eqnarray*}
M_p(f(\Gamma_0))&=&\int_{\Omega'}\rho_0^{p}\,dy=\int_{D} \int_a^b [\rho_0^{p}\circ f\circ \mathbf u]\,I_fdt\,dH^{n-1}_x\\
&=&\int_{D} \int_a^b\frac{1}{\ell^p} \left(\frac{|\dot{c}_x|}{I_f}\right)^{\frac{p}{p-1}}\,I_f\,dt\,dH^{n-1}_x=\int_{D} \ell^{1-p}dH^{n-1}_x.
\end{eqnarray*}
\end{proof}

The condition of regularity of the mapping $f$ required for Theorem~\ref{Rodin1} can be relaxed, so that $f$ belongs to some more general known classes of maps, see Section~\ref{nondifferentiable}. 
For example, we can assume that $f$ is homeomorphic $W^{1,1}$ Sobolev and orientation preserving in $\mathbb R^3$,  see \cite{HMaly2010}, or quasiconformal  in $\mathbb R^n$, see \cite{va71}. 
 
 %%%%%%%%%%%%%%%%%%%%%%%%%%%%%%%%% BELOW ARE LEMMAS 1 and 2, WHICH ARE COLLAPSED in ONE LEMMA> THE DIFFERENTIABILITY SHOULD BR SUBSTITUTED BT MAPPINGS OF FINITE DISTORTION. THIS SECTION NEEDS TO BE REWRITTEN %%%%%%%%%%%%%%%%%%%%%%%%
 
 \subsubsection{Coarea formula and change of variables}\label{coareasection}

In order to prove Theorem~\ref{Rodin2} we need a variant of the coarea and change-of-variable formulas which we formulate first for smooth maps.

Let $u\colon \mathbb R^n\to \mathbb R$ be $C^1$-smooth, $f\colon\mathbb R^n\rightarrow \mathbb R^n$ a $C^1$ orientation preserving homeomorphism. Let $D$ be $H^{n-1}$-measurable set, $\Omega=D\times (-\infty,\infty),$ and $\Omega^\prime=f(\Omega). $ Let $\sigma_r=\{{x\in\Omega: (u\circ f )(x)=r\}}$ and $\sigma'_r=\{y\in \Omega^\prime \colon u(y)=r\}$ be the level sets of $u\circ f$ and $u$ for a fixed $r,$ respectively. A variant of Sard's theorem, see~\cite{Dub57}, states that  $H^{n-1}_y(\sigma'_r\cap Z_u)=0$ for almost all $r\in\mathbb R$, where $Z_u$
is the  set of the points where the Jacobian $J_u$ vanishes.
%%%%%%%%%%%%%%%%%%
%%%%%%%%%%%%%%%%%%%

In particular, let us consider the condenser $(\Omega;D_0,D_1)$  in $\mathbb R^{n}$  parametrised by an embedding $({x},t)\mapsto \mathbf{u}({x},t)$, where ${x}\in D\subset  \mathbb R^{n}$, $D$ is $(n-1)$-Hausdorff dimensional compact , $\mathbf{u}=(u_1,\dots, u_n)$ is $C^1$-smooth homeomorphism with positive Jacobian $J_{\mathbf{u}}$,
 and $t\in [a,b]$. The function $u$ will be defined to be $u(y)=t\circ f^{-1}(y)$ where $t(\cdot)$ is the last coordinate $t$ of $(x,t)$ of the inverse map $\mathbf{u}^{-1}$. Define $I_f=J_fJ_{\mathbf{u}}$ .
%%%%%%%%%%%%%%%%%%%%
%%%%%%%%%%%%%%%%%%%

Let us recall the change of variables formula. For any Borel set $U\subset \mathbb R^n$ and for any homeomorphism $f\colon \mathbb R^n\to\mathbb R^n$ we define the pull-back of the Hausdorff measure $\mu$ by $f$ as $f^*\mu(U)\equiv(\mu\circ f)(U)=\mu(f(U))$. Then $\mu\circ f$ is absolutely continuous with respect to $\mu$ and the Radon-Nikodym theorem implies the change of variables formula 
 \[
\int_{f(U)}\rho'd\mu= \int_{U}(\rho'\circ f)\frac{d(\mu\circ f)}{d\mu}d\mu.
 \]
  
\begin{lemma}\label{ChVar} 
For a positive measurable function $\rho$ in $\mathbb R^n$ the following change of variables formula holds
\begin{equation}\label{changeofv0}
\int_{\sigma'_r}\rho dH^{n-1}(y)=\int_{\sigma_r}\frac{\rho |\nabla u|\circ f}{|\nabla(u\circ f)|}J_fdH^{n-1}(x).
\end{equation}
In addition, if $u=t\circ f^{-1}(y),$ then
\begin{equation}\label{changeofv}
\int_{\sigma'_r}\rho dH^{n-1}(y)=\int_{\sigma_r} (\rho\,|(\nabla (t(f^{-1}))\circ f(\mathbf{u}(x,t))|) I_fdH^{n-1}(x).
\end{equation}
\end{lemma}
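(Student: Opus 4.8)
The plan is to derive both identities \eqref{changeofv0} and \eqref{changeofv} from the coarea formula combined with the ordinary change-of-variables formula for the Lebesgue integral under the diffeomorphism $f$. Recall the coarea formula: for $C^1$ (or Lipschitz) $g\colon\mathbb R^n\to\mathbb R$ and measurable $h\geq 0$,
\[
\int_{\mathbb R^n} h\,|\nabla g|\,dx=\int_{\mathbb R}\Big(\int_{g^{-1}(r)}h\,dH^{n-1}\Big)dr.
\]
I would work with an arbitrary auxiliary weight $\eta\geq 0$ on $\mathbb R$ and use that $\eta$ is free to pass from an identity of $r$-integrals to an identity of level-set integrals for almost every $r$.

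First I would prove \eqref{changeofv0}. Fix $\eta\geq 0$ on $\mathbb R$ and measurable $\rho\geq 0$ on $\mathbb R^n$, and apply the coarea formula to $u$ on $\Omega'$ with integrand $\rho\,(\eta\circ u)$, obtaining
\[
\int_{\Omega'}\rho\,(\eta\circ u)\,|\nabla u|\,dy=\int_{\mathbb R}\eta(r)\Big(\int_{\sigma'_r}\rho\,dH^{n-1}(y)\Big)dr.
\]
Then I would change variables $y=f(x)$ on the left, producing the factor $J_f$, and recognize the resulting $\Omega$-integral as the coarea expansion of the $C^1$ function $u\circ f$ (whose level sets are the $\sigma_r$), by inserting and cancelling $|\nabla(u\circ f)|$. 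This manipulation is legitimate away from the set where $\nabla(u\circ f)=0$; since $f$ is a $C^1$ homeomorphism and $\sigma_r\cap\{\nabla(u\circ f)=0\}=f^{-1}(\sigma'_r\cap Z_u)$, the quoted variant of Sard's theorem ($H^{n-1}_y(\sigma'_r\cap Z_u)=0$ for a.e. $r$) guarantees that the quotient $|\nabla u|\circ f/|\nabla(u\circ f)|$ is defined $H^{n-1}$-a.e. on $\sigma_r$ for a.e. $r$. Comparing the two coarea representations and letting $\eta$ vary then yields \eqref{changeofv0}.

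For \eqref{changeofv} I would specialize $u=t\circ f^{-1}$, so that $u\circ f=t$ and hence $|\nabla(u\circ f)|=|\nabla t|$. It then remains to transport the integral over $\sigma_r\subset\Omega$ to an integral over $D$ through the parametrization $\mathbf u$. Applying the area formula for $\mathbf u$ (equivalently, the coarea formula for $t$ followed by the change of variables $x=\mathbf u(x',s)$ and Fubini) gives, for a.e. $r$,
\[
\int_{\sigma_r}G\,dH^{n-1}=\int_{D}(G\circ\mathbf u)\,(|\nabla t|\circ\mathbf u)\,J_{\mathbf u}\,dH^{n-1}(x).
\]
Taking $G$ to be the integrand of \eqref{changeofv0} makes the factor $|\nabla t|$ cancel against the $1/|\nabla t|$ already present, while the two Jacobians combine into $J_fJ_{\mathbf u}=I_f$, producing \eqref{changeofv}.

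The main obstacle is the measure-theoretic bookkeeping rather than any single hard estimate: one must keep the two changes of variables ($f$ on $\Omega$ and $\mathbf u$ on $D\times[a,b]$) separate, verify that the factors $|\nabla t|$ cancel exactly so that only $I_f$ survives, and justify each step up to $H^{n-1}$-null sets and for a.e. level $r$. The Sard-type statement preceding the lemma is exactly what makes the quotients $|\nabla u|\circ f/|\nabla(u\circ f)|$ well defined on almost every level set; once those null sets are controlled, the passage from equality of the $r$-integrals for all $\eta$ to equality of the level-set integrals for a.e. $r$ is routine.
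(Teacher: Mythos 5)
Your proof is correct and rests on the same two pillars as the paper's own argument: the coarea formula combined with the change-of-variables formula under $f$, localized to almost every level set. The differences are organizational rather than substantive. For the localization, the paper integrates $\rho|\nabla u|$ over the sublevel sets $B'_r=\{y\in\Omega'\colon u(y)\le r\}$ and $B_r=\{x\in\Omega\colon u\circ f(x)\le r\}$ and then differentiates in $r$ (valid for a.e.\ $r$), whereas you insert an arbitrary weight $\eta\circ u$ and let $\eta$ vary; these devices are interchangeable, though yours avoids invoking differentiation of the cumulative integrals. For \eqref{changeofv}, the paper simply re-applies \eqref{changeofv0} with $f$ replaced by $f\circ\mathbf u$ (and $\Omega$ by the parameter space $D\times[a,b]$), so that the denominator becomes $|\nabla(t(f^{-1})\circ f\circ\mathbf u)|=|(0,\dots,0,1)|=1$ and the Jacobian is automatically $I_f=J_fJ_{\mathbf u}$; you instead keep \eqref{changeofv0} on $\Omega$ and transport the integral over $\sigma_r$ to $D$ by a separate area-formula step with tangential Jacobian $(|\nabla t|\circ\mathbf u)\,J_{\mathbf u}$, letting the $|\nabla t|$ factors cancel. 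Both routes are valid: the paper's substitution is shorter and needs no tangential-Jacobian identity, while your two-step version keeps the roles of $f$ and $\mathbf u$ cleanly separated and makes the cancellation explicit; moreover, your explicit appeal to the Sard-type statement to control the set where $\nabla(u\circ f)=0$ spells out a point that the paper's proof leaves implicit.
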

\begin{proof}  Indeed, the conditions on the function $u$ and the mapping $f$  ensure that both $u$ and $u\circ f$ are $C^1$-smooth.   For a fixed $r$ denote by $B_r=\{x \in  \Omega \colon u\circ f(x)\leq r\}$ and $B'_r=\{y\in \Omega' \colon u(y)\leq r\}$. 

By the co-area formula (see, e.g., \cite{Maly2003}) we have that
\[
\int_{B'_r} \rho |\nabla u| dy= \int_{-\infty}^r  \int_{\sigma'_r}\rho dH^{n-1}(y) dr. 
\]
Then
\begin{equation}\label{coarea}
\frac{\partial}{\partial r}\int_{B'_r}\rho |\nabla u| dy=\int_{\sigma'_r}\rho dH^{n-1} (y)
\end{equation}
holds as the above derivative exists for almost all $r$, see, e.g.,~\cite[pp. 29--33]{Morgan}.

By the change of variables formula (see \cite{Hajlasz0}) $$ \int_{B'_r}\rho |\nabla u| dy=\int_{B_r}(\rho|\nabla u|)\circ f J_f dx .$$  Applying  the co-area formula, once more and in a similar fashion as before, to the right hand side of the above equation and differentiating with respect to $r$ gives

\begin{equation}\label{coarea1}
\frac{\partial}{\partial r}\int_{B_r}(\rho|\nabla u|)\circ f J_f  dx=\int_{\sigma_r}\dfrac{(\rho |\nabla u|)\circ f }{|\nabla (u\circ f)|} J_fdH^{n-1}(x) .
\end{equation}

Comparison of the right-hand sides in \eqref{coarea} and \eqref{coarea1} leads to the first statement of  Lemma \ref{ChVar}.

To prove the second part of  Lemma \ref{ChVar}, observe that we set $u=t(f^{-1}).$  Thus $\sigma_r=\{x\colon (x_1,\dots,x_{n-1})\in D,\,t=r\in(a,b)\}$ and
$\sigma_r'$ is a level set of the function $t(f^{-1})$, i.e.,  $\sigma'_r=\{y\in f(\Omega)\colon t(f^{-1})=r\}.$ Then we must change $f\to f\circ \mathbf{u}$ and $u\to t(f^{-1})$ in (\ref{changeofv0}). 
The gradient in the denominator $|\nabla(u\circ f)|$ in (\ref{changeofv0})  becomes $|\nabla(t(f^{-1})\circ f\circ \mathbf{u})|=|(0,\dots,1)|=1$ from where follows (\ref{changeofv}).
\end{proof}

 Lemma~\ref{ChVar} implies that in our case the Radon-Nikodym derivative becomes
 \begin{equation}\label{sarea}
 \frac{d(\mu\circ f)}{d\mu}=|(\nabla (t(f^{-1}))\circ f(\mathbf{u}(x,t))|I_f(x).
 \end{equation}

%%%%%%%%%%%%%PROOF of THEOREM \ref{RODIN2}%%%%%%%%%
%%%%%%%%%%%%%%%%%%%%%%
\subsubsection{Proof of Theorem \ref{Rodin2}}

%%%%%%
%Inserted Text to replace the Above
%%%%%%%%

 \begin{proof}
The proof is similar to that of Theorem~\ref{Rodin1} where we substitute Radon-Nikodym derivative \eqref{sarea} in place of $|\dot{c}_x|$ in the claim on admissibility of $\rho_0$, and use  Lemma~\ref{ChVar} when the change-of-variable formula (\ref{changeofv}) is applied.
First we show that $\rho_0$ is an admissible function:
$$
\int_{\sigma'_t}\rho_0dH^{n-1}=\int_{\sigma'_t} \frac{1}{\ell( t(f^{-1}))}\left(|\nabla  t(f^{-1})|\right)^{1/(q-1)}dH^{n-1}=
$$
$$
\int_{\sigma_t} \frac{1}{\ell( t(f^{-1}))}\left(|\nabla  t(f^{-1})|\right)^{1/(q-1)}|\nabla  t(f^{-1})|I_fdH^{n-1}=
$$
$$
\dfrac{1}{\ell( t(f^{-1}))}\int_{\sigma_t} |\nabla  t(f^{-1})|^pI_fdH^{n-1}=1.
$$

Now let $\rho$ be any admissible function for $E'_0,$ i.e. $\int_{\sigma'_t}\rho dH^{n-1}\geq 1.$ Since $\int_{\sigma'_t}\rho_0dH^{n-1}\geq 1$, we have
$$
\dfrac1{\ell^{q-1}}\int_{\sigma'_t}(\rho-\rho_0)dH^{n-1}\geq 0 .
$$
and by Lemma \ref{ChVar}
$$
\dfrac1{\ell^{q-1}}\int_{\sigma_t}((\rho-\rho_0) |\nabla  t(f^{-1})|)\circ f  I_f dH^{n-1}\geq 0 .
$$
Since $|\nabla  t(f^{-1})|= \ell^ {q-1} \rho^{q-1}\circ f\circ\mathbf{u},$
it follows that
$$
\int_a^b \int_{\sigma_t}((\rho-\rho_0) \rho_0^{q-1})\circ f \circ\mathbf{u}\,I_f dH^{n-1} dt\geq 0.
$$
By Fubini's theorem and change of variables formula we obtain that
$
 \int_{\Omega'}(\rho-\rho_0) \rho_0^{q-1}dy \geq 0,
$
and therefore applying Remark~\ref{extremaltB}, we conclude that $\rho_0$ is extremal for the module problem under consideration and that $M_q(\Sigma'_0)=\int_{\Omega'} \rho_0^{q}dy.$
\end{proof}

%%%%%%%
%END of inserted text
%%%%%%%%%%

\subsubsection{Theorem \ref{Rodin1} and \ref{Rodin2} for non-smooth homeomorphisms}\label{nondifferentiable}

The conditions we imposed on the function $u$ and the map $f$ can be relaxed. Let us analyse the following ingredients of the above lemmas. We need the following properties:
\begin{itemize}
\item[(i)] The function $u\colon \mathbb R^n\to \mathbb R$ and the homeomorphism $f\colon \mathbb R^n\to \mathbb R^n$ must be such that $u$ and 
the superposition $u\circ f$ are of the same type of regularity;
\item[(ii)]  The homeomorphism $f\colon \mathbb R^n\to \mathbb R^n$ and its inverse $f^{-1}$ must be of the same type of regularity;
\item[(iii)]  The critical set for the level sets $\sigma_r$ and $\sigma'_r$ must have the $H^{n-1}$-measure zero. 
\end{itemize}

First, let us  describe the level sets of the function $u$.

\begin{definition}
Given integers $1\leq k\leq n$, we say that a Borel set $B\subset \mathbb R^n$ is {\it countably Hausdorff $H^k$-rectifiable} if there exists a sequence of Lipschitz maps
$\psi_i\colon E_i\subset \mathbb R^k\to \mathbb R^n$ such that  $H^k\left(B\setminus \bigcup_{i=1}^{\infty}\psi_i(E_i)\right)=0$. 
\end{definition}

The set $B\subset \mathbb R^n$ is countably $H^k$-rectifiable if and only if there exists a sequence of $k$-dimensional $C^1$-smooth manifolds $M_1, M_2,\dots$, such
that \linebreak $H^k\left(B\setminus \bigcup_{i=1}^{\infty}M_i\right)=0$, see~\cite{Fed,Hajlasz}. A version of Sard's theorem tells us that if $u\colon \mathbb R^n\to\mathbb R$ is Lipschitz, then
for almost every point $r\in \mathbb R$, $u^{-1}(r)$ is  countably $H^{n-1}$-rectifiable, see \cite{Hajlasz}. 
If $u\in W^{1,p}(\mathbb R^n,\mathbb R)$, then there exists a Borel representative of $u$ such that $u^{-1}(r)$ is countably $H^{n - 1}$-rectifiable for almost all $r$, see \cite{AS, Boya05, Fed, Hajlasz, Maly2003} for more on rectifiable sets. We remark that this analogue of Sard's theorem is not enough to state the coarea formula. The following definition is found in~\cite{Resh69}.

\begin{definition}
A function $u\colon \mathbb R^n\to\mathbb R$ is called {\it $p$-quasicontinuous} if it is Borel measurable and continuous on a set $\mathbb R^n\setminus U$ where $U$ is a set of arbitrary small $p$-capacity.
\end{definition}

%The notion of $p$-capacity was first introduced by Loewner~\cite{Loew} in late 50's, and then, used by Serrin~\cite{Serrin} in the study of removable singularities of PDE and by Gehring~\cite{Geh620} in the theory of spatial quasiconformal mappings. 
Reshetnyak's theorem~\cite{Resh69} claims that
any $W^{1,p}(\mathbb R^n, \mathbb R)$ Sobolev function, $p\geq 1$, has a $p$-quasicontinuous representative for $1\leq p \leq n$, or a continuous representative for $p> n$.
Now  let $u\colon \mathbb R^n\to \mathbb R$ be a $p$-quasicontinuous representative of a $W^{1,p}$-Sobolev function, $p\geq 1$, and let
 $g\colon \mathbb R^n\to \mathbb R$ be positive and measurable.
Then the  coarea formula 
\eqref{coarea}
holds, where  $\nabla$ means the distributional gradient.

We discuss now the composition operators. We say that a locally Lipschitz function $u\colon \mathbb R^n\to \mathbb R$ satisfies the polynomial growth
condition for its partial derivatives almost everywhere in $\mathbb R^n$, if
\[
\left|\frac{\partial u}{\partial x_j}(x)\right|\leq a(1+|x_j|^{\nu}),\quad \text{a.e. in $\mathbb R^n$, $j=1,\dots, n$},
\]
where 
$
\nu=\frac{n(p-r)}{r(n-p)}, \quad 1\leq r\leq p<n<\infty,
$
 and $a>0$ is a constant. Given a function $u\colon\mathbb R^n\to\mathbb R$, and a map $f\in W^{1,p}(\Omega, \mathbb R^n)$, $p\geq 1$ let us define a composition operator  $T_u$ by $T_u(f)=u\circ f$. The operator $T_u$ maps   $W^{1,p}(\Omega, \mathbb R^n)$ into $W^{1,r}(\Omega, \mathbb R)$, if and only if,  $1\leq p<n $, the domain $\Omega$ is bounded domain in $\mathbb R^n$ satisfying the cone condition, and $u$ is locally Lipschitz having its partial derivatives  of polynomial growth, see Marcus and Mizel~\cite{Marcus79}. For $p>n$,
the polynomial growth of the partial derivatives can be omitted.

A different composition operator  $T_f\colon W^{1,p}(\Omega',\mathbb R)\to W^{1,p}(\Omega,\mathbb R)$ defined
by  $T_f(u)=u\circ f$ was considered in a series of papers \cite{Gold84, Vodopis76, Vodopis89}. It was proved that the necessary and sufficient condition
for $T_f$ to induce an isomorphism of $W^{1,p}$ spaces is that $f$ is quasiisometric.
Ukhlov and Vodopyanov~\cite{Vodopis2002, Vodopis2005}  studied also the operator  $T_f\colon L^{1,p}(\Omega',\mathbb R)\to L^{1,q}(\Omega,\mathbb R)$, for Sobolev spaces whose Sobolev norm does not contain $\|u\|_p$. A necessary
and sufficient condition  
 involved ACL function of bounded distortion, which in particular, included the case $p=n$. Let us remark that $ACL^p=W^{1,p}$ in the sense that any $ACL^p$ function represents a function from $W^{1,p}$ and any $W^{1,p}$ function contains an $ACL^p$ representative.

Finally, let us turn to the problem of regularity of the inverse map. For this we need the definition of a function of finite distortion, see~\cite{IvaMar}.

\begin{definition}
A map $f\colon \Omega\to \mathbb R^n$ on an open set $\Omega\subset \mathbb R^n$ has finite distortion if $f\in W^{1,1}_{\text{loc}}(\Omega,\mathbb R^n)$, the Jacobian $J_f\geq 0$ almost everywhere in $\Omega$, $J_f\in L^1_{\text{loc}}(\Omega)$ and there exists some function $K\colon \Omega\to [1,\infty]$ finite almost everywhere in $\Omega$ such that
\[
\|Df(x)\|^n\leq K(x) J_f(x), \quad \text{a.e. in $\Omega$}.
\]
\end{definition}

Let $\Omega$ be an open set in $\mathbb R^n$, and let $f\in W^{1,n-1}(\Omega,\mathbb R^n)$ be a homeomorphism of finite distortion, $\Omega'=f(\Omega)$. A result by Cs\"ornyei, Hencl, and Mal\'y \cite{CMaly2010} implies that $f^{-1}\in W^{1,n-1}(\Omega',\mathbb R^n)$ and $(f^{-1})_n\in W^{1,n-1}(\Omega',\mathbb R)$.
For the results on the functions of the Sobolev class $L^{1,p}$, see~\cite{Vodopis2012}.

\medskip

\noindent
{\bf Conclusion.}
 {\it  In view of the above, the conditions on the set $\Omega\subset \mathbb R^n$, on the 
function $u$, and on the mapping $f$ in Lemma~\ref{ChVar}
can be chosen, for example, as follows.
\begin{itemize}
\item The function $u$ can be chosen to be locally Lipschitz function from $W^{1,p}$ satisfying the polynomial growth condition
for its partial derivatives for $1\leq p<n$, and the homeomorphism $f$ to be a $p$-quasicontinuous representative of a $W^{1,p}$-Sobolev map in Lemma~\ref{ChVar};
\item The homeomorphism $f$ in 
Lemma~\ref{ChVar} can be chosen to be $W^{1,n-1}$-Sobolev of finite distortion.
\end{itemize}
}

%%%%%%%%%%%%%%%%%%%%%%%%

%%%%%%%%%%%%%%%%%%%%%%%%

\section{Extremal measures  on polarizable groups}\label{groups}

%%%%%%%%%%%%%%%%%%%%%%%%

In this section we want to prove an analogue of  Rodin's theorem and to discuss extremal functions and extremal families for the module of curves and separating sets in a geometry different from the Euclidean one, namely for a special type of Carnot groups. We start with some necessary definitions.

%%%%%%%%%%%%%%%%%%%%%%%%

\subsection{Definition of polarizable groups}

%%%%%%%%%%%%%%%%%%%%%%%%

\begin{definition}
\index{Carnot group}
The Carnot group $G$ is a connected, simply connected Lie group, whose Lie algebra $\mathfrak{g}$ is nilpotent and possesses a
stratification
$
\mathfrak{g}=\bigoplus_{j=1}^{l}V_j,
$
where $[V_1,V_j]=V_{j+1}$ for all $j\in\mathbb{N}$ with $V_j=\{0\}$, whenever $j>l$. The positive integer $l$
is called a step of the group. 
\end{definition}

We assume that the underlying layer $V_1$ is  endowed with an inner product $\langle.\,,.\rangle_0$, and let $X_1,\cdots,X_k$ be an orthonormal basis of $V_1$ with respect to this inner product. The vector fields $X_1,\cdots,X_k$ are usually called horizontal, and a sub-bundle $HG$ of the tangent bundle $TG$ of the group $G$ with the typical fiber $H_g G=\spn\{X_1(g),\cdots,X_k(g)\}\subset T_gG$, $g\in G$, is called a horizontal sub-bundle. As a consequence, any vector $v\in H_g G$ is also called horizontal. The inner product $\langle.\,,.\rangle_0$ on $V_1$ defines a left-invariant 
sub-Riemanian metric on $G$ through the left translations, which we denote by the same symbol. We write $\|v\|_0^2=\langle v,v\rangle_0$ for $v\in H_g G$.
Let us use the normal coordinates of the first kind, where an element $g\in G$ is identified with 
$(x_1,\cdots,x_k,t_{k+1},\cdots,t_m)\in\R^m$ by the formula
\[
g=\exp\left(\sum\limits_{i=1}^k x_iX_i+\sum\limits_{i=k+1}^m t_iT_i\right),
\]
where $T_{k+1},\cdots,T_m$ denotes a set of vectors, extending the horizontal basis $X_1,\cdots,X_k$ to the entire basis of $\mathfrak{g}$.
The stratified structure of the Lie algebra naturally defines the dilation $\delta_s$, $s>0$, that in the introduced coordinates can be written as
\begin{eqnarray*}
\delta_sg & = &\delta_s(x_1,\cdots,x_k,t_{k+1},\cdots,t_{k+\dim (V_2)},\ldots,t_{k+\sum\limits_{j=1}^{l-1}\dim(V_{j})+1},\cdots,t_{m})
\\
& = & (sx_1,\cdots,sx_k,s^2t_{k+1},\cdots,s^2t_{k+\dim (V_2)},\ldots,s^lt_{k+\sum\limits_{j=1}^{l-1}\dim(V_{j})+1},\cdots,s^lt_{m}).
\end{eqnarray*}
As a simply connected nilpotent group, admitting dilations $\delta_s$, $G$ is globally diffeomorphic to 
$\mathfrak{g}\cong\R^m$, $m=\sum_{i=1}^l\dim{V_i}$, via the exponential map, see~\cite[Proposition 1.2]{folland1982}. The number $m$ is called a topological dimension of the group. The sub-Riemannian metric induces the distance function $d_{cc}$ on $G$ in the same way as it does for a Riemannian metric. The distance function $d_{cc}$ is usually called the Carnot-Carath\'eodory distance, and the Hausdorff dimension of the metric space $(G,d_{cc})$ is equal to $Q=\sum_{i=1}^li\dim{V_i}$, see~\cite{Mitch}. The number $Q$ is also called a {\it homogeneous dimension} of $G$, and it will play an important role in the forthcoming calculations. The Haar measure on $G$ is induced by the exponential map from the Lebesgue measure on $\mathfrak{g}\cong\R^m$. 
A norm $N_G$ on the group $G$ is called homogeneous if it is a homogeneous of order one function with respect to the dilation $\delta_s$: $N_G(\delta_sg)=sN_G(g)$ for all $g\in G$.

The horizontal gradient $\nabla_0$ is a unique horizontal vector such that  
\[
\langle\nabla_0f,v\rangle_0=v(f),\quad\text{for any}\quad v\in H_gG,\quad f\in C^{\infty}(G).
\]
The horizontal gradient is expressed in the orthonormal basis $X_1,\ldots, X_k$ as  $\nabla_0f=(X_1f,\ldots,X_kf)$.

Given a domain $U\subset G$, a function $u\in C^2(U)$ is called $p$-harmonic \index{$p$-harmonic function}
if it satisfies the
$p$-sub-Laplace
equation in $U$:
\begin{equation}\label{eq:p_laplace}
 \Delta_{0,p}u:=\sum\limits_{i=1}^kX_i\left(\|\nabla_0u\|_0^{p-2}X_iu\right)=0,
\end{equation}
and
$\infty$-harmonic \index{$\infty$-harmonic function}
if it satisfies the
$\infty$-sub-Laplace
equation in $U$:
\[
 \Delta_{0,\infty}u:=\frac{1}{2}\langle\nabla_0\|\nabla_0u\|_0^{2},\nabla_0u\rangle_0=0.
\]
The derivatives of $u$ can also be understood in the generalised sense. By a result of Folland~\cite[Theorem 2.1]{folland1975},  there exists a unique fundamental solution $u_2$ in any Carnot group $G$ to
the Kohn sub-Laplacian $\Delta_{0,2}$, which is smooth away from zero and homogeneous of degree 
$2-Q:\,u_2\circ \delta_s=s^{2-Q}u_2$.
\begin{definition}\cite{balogh2001}
\label{polarizable group}
We say that a Carnot group $G$ is polarizable if the fundamental solution $u_2$ of the Kohn sub-Laplacian $\Delta_{0,2}$ has 
the property that the homogeneous norm $N_G=u_2^{\frac{1}{2-Q}}$ associated with $u_2$ is $\infty$-harmonic 
away from zero in $G$. 
\end{definition}
Examples of polarizable groups are $\R^m$, $n$-th Heisenberg group $\heis^n$, and $H$(eisenberg)-type groups introduced by Kaplan~\cite{Kaplan}, which definition will be given in Section~\ref{sec:Htype}. The main result of~ 
\cite{balogh2001} is that in any polarazible group it is possible to carry out the construction of some sort of spherical coordinates in
the same way as it has been done in~\cite{KorReim87} for the Heisenberg group. 

Let $G$ be a polarizable Carnot group, and let $N_G$ be a norm from Definition~\ref{polarizable group}. We denote by $\mathcal Z$ the characteristic set of the function $N_G$:
\[
\mathcal Z:=\{0\}\cup\{g\in G\setminus \{0\}\mid\ \nabla_0N_G(g)=0\}.
\]
The radial flow is the solution to the Cauchy initial-value problem in $G\setminus\mathcal Z$
\begin{equation}\label{radialflow}
\left\{ \begin{array}{lllllll}
 &\frac{\partial}{\partial{s}}\,\phi(s,g)=\frac{N_G(\phi(s,g))}{s}\cdot\frac{\nabla_0N_G(\phi(s,g))}{\|\nabla_0N_G(\phi(s,g))\|_0^2},\\
 &\phi(1,g)=g.
       \end{array}
\right. 
\end{equation}
%%%%%%Lemma radial flow of horizontal curves
\begin{proposition} \label{lemmahorizflow}\cite{balogh2001}
\renewcommand{\theenumi}{\roman{enumi}}
\noindent The flow $\phi$ satisfies the following properties:
\begin{enumerate}
 \item $N_G(\phi(s,g))=sN_G(g)$ for $s>0$, and $g\in G\setminus\mathcal{Z}$;
 \item $\|(\partial{\phi}/\partial{s})\|_0$ is independent of $s$, i.e., 
\[
\|(\partial{\phi}/\partial{s})\|_0=\frac{N_G(g)}{\|\nabla_0N_G(\phi(s,g))\|_0}=:\lambda(g)^{-1},
\]
for a non-zero real-valued function $\lambda$ on $G\setminus\mathcal{Z}$;
 \item $\det D_g\phi(s,g)=s^Q$ for $s>0$ and $g\in G\setminus\mathcal{Z}$, where $D_g\phi$ denotes the differential of the map
$\phi(s,\cdot):G\setminus\mathcal{Z}\to G\setminus\mathcal{Z}$, where $G\setminus\mathcal{Z}$ is considered as a domain in $\mathbb{R}^m$.
\end{enumerate}
\end{proposition}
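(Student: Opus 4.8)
The plan is to prove the three properties in the order stated, since (i) feeds into (ii), and both feed into (iii). The engine throughout is the defining ODE \eqref{radialflow} together with the two distinguished PDEs satisfied by $N_G$: the $\infty$-harmonicity $\Delta_{0,\infty}N_G=0$ from Definition~\ref{polarizable group}, and a second-order identity for $\Delta_{0,2}N_G$ extracted from the fact that $u_2=N_G^{2-Q}$ is $\Delta_{0,2}$-harmonic away from the origin. For (i), I would differentiate $s\mapsto N_G(\phi(s,g))$ along the flow. Since $\partial\phi/\partial s$ is horizontal, its action on $N_G$ is computed by pairing with $\nabla_0N_G$, and \eqref{radialflow} gives
\[
\frac{\partial}{\partial s}N_G(\phi(s,g))=\Big\langle \nabla_0N_G(\phi),\tfrac{\partial\phi}{\partial s}\Big\rangle_0=\frac{N_G(\phi)}{s}\cdot\frac{\|\nabla_0N_G(\phi)\|_0^2}{\|\nabla_0N_G(\phi)\|_0^2}=\frac{N_G(\phi)}{s}.
\]
This is a linear ODE in $y(s)=N_G(\phi(s,g))$ with $y(1)=N_G(g)$, whose solution is $y(s)=sN_G(g)$, which is (i).

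For (ii), taking norms directly in \eqref{radialflow} and inserting (i) yields
\[
\Big\|\tfrac{\partial\phi}{\partial s}\Big\|_0=\frac{N_G(\phi)}{s\,\|\nabla_0N_G(\phi)\|_0}=\frac{N_G(g)}{\|\nabla_0N_G(\phi(s,g))\|_0},
\]
so it remains to show that the gradient norm is constant along each flow line. This is where polarizability enters: differentiating $\|\nabla_0N_G(\phi)\|_0^2$ along the flow and using that $\partial\phi/\partial s$ points in the direction of $\nabla_0N_G$ produces, up to a positive factor, the $\infty$-Laplacian $\Delta_{0,\infty}N_G(\phi)=\tfrac12\langle\nabla_0\|\nabla_0N_G\|_0^2,\nabla_0N_G\rangle_0=0$. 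Hence $\|\nabla_0N_G(\phi(s,g))\|_0$ is independent of $s$, so the right-hand side above is $s$-independent, and $\lambda(g)^{-1}$ is well defined and nonzero because $g\notin\mathcal Z$ forces $\nabla_0N_G\neq0$ everywhere along the (gradient-norm-preserving) orbit.

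For (iii), I would pass to the autonomous form of the flow: with $\tau=\log s$ the velocity becomes the $s$-independent horizontal field $V=N_G\,\nabla_0N_G/\|\nabla_0N_G\|_0^2$, and the original velocity is $\tfrac1s V$. By Liouville's formula the Jacobian $\mathcal J(s,g)=\det D_g\phi(s,g)$ obeys $\partial_s\log\mathcal J=\tfrac1s(\operatorname{div}V)\circ\phi$, where $\operatorname{div}$ is the Euclidean divergence on $\R^m\cong G$. Since each $X_i$ is divergence-free for the Haar/Lebesgue measure, the Euclidean divergence of the horizontal field $V$ equals its horizontal divergence, and a direct computation gives
\[
\operatorname{div}V=1+\frac{N_G\,\Delta_{0,2}N_G}{\|\nabla_0N_G\|_0^2},
\]
once the cross term $\langle\nabla_0N_G,\nabla_0\|\nabla_0N_G\|_0^2\rangle_0$ is killed again by $\Delta_{0,\infty}N_G=0$. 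Expanding $\Delta_{0,2}(N_G^{2-Q})=0$ via the product rule supplies the identity $\Delta_{0,2}N_G=(Q-1)N_G^{-1}\|\nabla_0N_G\|_0^2$, whence $\operatorname{div}V=Q$. Then $\partial_s\log\mathcal J=Q/s$ with $\mathcal J(1,g)=1$ integrates to $\mathcal J(s,g)=s^Q$.

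I expect the computation of $\operatorname{div}V$ to be the main obstacle. It simultaneously needs the $\infty$-harmonicity to discard the first-order cross term, the separately derived second-order identity for $\Delta_{0,2}N_G$ (which is where the fundamental solution $u_2$ and the homogeneous dimension $Q$ actually enter), and the easily overlooked fact that on a Carnot group the Euclidean and horizontal divergences of a horizontal field coincide. By contrast, (i) and (ii) are short ODE arguments once the definition of $\nabla_0$ and the $\infty$-harmonicity are in hand.
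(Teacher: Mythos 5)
Your proof is correct, and it is worth noting that the paper itself gives no proof of this proposition at all---it is imported verbatim from \cite{balogh2001}---so the only benchmark is the argument in that reference, which yours reconstructs essentially step for step: (i) from the ODE $\frac{\partial}{\partial s}(N_G\circ\phi)=\frac{1}{s}(N_G\circ\phi)$, (ii) from the constancy of $\|\nabla_0 N_G\|_0$ along flow lines forced by $\Delta_{0,\infty}N_G=0$, and (iii) from Liouville's formula together with $\operatorname{div}\bigl(N_G\,\nabla_0 N_G/\|\nabla_0 N_G\|_0^2\bigr)=Q$, where the divergence computation uses both the $\infty$-harmonicity and the identity $\Delta_{0,2}N_G=(Q-1)N_G^{-1}\|\nabla_0 N_G\|_0^2$ extracted from Folland's fundamental solution. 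The two supporting facts you flag---that left-invariant fields are divergence-free for Haar/Lebesgue measure (so horizontal and Euclidean divergence agree), and that the orbit stays in $G\setminus\mathcal{Z}$ because the gradient norm is preserved---are exactly the points one must check, and your treatment of them is sound.
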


\begin{definition}
An absolutely continuous curve $c\colon I\to G$ is called horizontal if the vector $\frac{d}{ds}c(s)$ is horizontal, i.e., $\frac{d}{ds}c(s)\in H_{c(s)}G$ for all $s\in I$, when it is defined.
\end{definition}
The solution $\phi$ of the Cauchy problem~\eqref{radialflow} is a horizontal curve simply because its tangent vector $\frac{\partial\phi(s,g)}{\partial s}$ is proportional to the horizontal gradient of some function, namely, of the homogeneous norm.

At the end of this section we notice that if $c\colon I\to G$ is a horizontal curve whose locus belongs to the level set of the function $N_G$, then
\[
\frac{d}{d\,\tau}N_G(c(\tau))=\langle \nabla_0 N_G,\frac{d\, c(\tau)}{d\,\tau}\rangle_0=0.
\]
We say that the horizontal gradient is orthogonal to the level set meaning that it is orthogonal to any horizontal curve (whose locus belongs to the level set) with respect to the inner product $\langle .\,,.\rangle_0$. As a consequence, we conclude that the flow $\phi$ solving Cauchy problem~\eqref{radialflow} is orthogonal to the level set of the function $N_G$, where the orthogonality is understood with respect to the inner product  $\langle .\,,.\rangle_0$ of the tangent vector to the flow and the tangent vectors to the horizontal curves lying on the level set.

Note that not all Carnot groups are polarizable. It is known that the fundamental solution to $2$-sub-Laplacian always 
exists, but it is not necessarily $\infty$-harmonic. An example of such kind of groups can be anisotropic $H$-type  groups, see reasoning in~\cite{balogh2001}. Anisotropic $H$-type groups were studied for instance in~\cite{CChGr,ChMar1}. Now we continue with examples of polarizable groups.

%%%%%%%%%%%%%%%%%%%%%%%%%%%%%%%%%%%%

\subsubsection{Euclidean space}

%%%%%%%%%%%%%%%%%%%%%%%%%%%%%%%%%%%%

First of all, notice that $G=\mathbb R^k$ is a Carnot group of step 1, where the Lie algebra is the space $V_1=\spn\{X_1,\ldots,X_k\}$ with $X_j=\frac{\partial}{\partial x_j}$. Since the commutation relations of $X_j$, $j=1,\ldots,k$ all vanish, the spaces $V_2=\ldots=V_l=\{0\}$. The exponential map is a map identifying $\mathbb R^k$ with $V^1$. The Kohn sub-Laplacian $\Delta_{0,2}$ is the usual Laplacian $\Delta_{2}$, whose fundamental solution $u_2(x)=|x|^{2-n}$ defines the homogeneous norm $N_{\mathbb R^k}(x)=|x|$, which is the Euclidean norm of the element $x\in\mathbb R^k$. It is trivial to check that $|\cdot|$ is an $\infty$-harmonic function. The radial flow $\phi(s,x)=sx$, $x\in\mathbb R^k$ is the solution to the corresponding Cauchy problem~~\eqref{radialflow}.

%%%%%%%%%%%%%%%%%%%%%%%%%%%%%%%%%%%%

\subsubsection{H-type groups}\label{sec:Htype}

%%%%%%%%%%%%%%%%%%%%%%%%%%%%%%%%%%%%

\begin{definition}
\index{$H$-type group}
We say that a Carnot group $G$ is of Heisenberg type $(H$-type$)$ if its Lie algebra $\mathfrak{g}=V_1\oplus V_2$ of $G$ 
is 2 step, and if it is endowed with an inner product 
$\langle.\,,.\rangle$, and admits a linear map $J\colon V_2\to \End(V_1)$,  compatible with the 
inner product in the following sense:
\begin{enumerate}
 \item $\langle J_ZU,V\rangle=\langle Z,[U,V]\rangle\quad\text{for all}\quad Z\in V_2\quad U,V\in V_1$ and
 \item $J_Z^2=-\|Z\|\Id\qquad\text{for all}\quad Z\in V_2,\quad\text{where}\quad\|Z\|^2=\langle Z,Z\rangle$.
\end{enumerate}
\end{definition}
The inner product $\langle.\,,.\rangle_0$ on $V_1$ is the restriction of $\langle.\,,.\rangle$ on $V_1$. 

Let $G$ be a group of $H$-type. Since the exponential map of $G$ is an analytic diffeomorphism, we can define real analytic mappings
$u\colon G\to V_1$ and $z\colon G\to V_2$ by
$g=\exp\left(u(g)+z(g)\right)$, $g\in G$.
The function
\begin{equation}\label{eq:normHtype}
 N_G(g)=\left(\|u(g)\|^4+16\|z(g)\|^2\right)^{1/4}
\end{equation}
is  a homogeneous norm on $G$. It is well known that $N_G$ is smooth on $G\backslash\{0\}$, see \cite[Theorem 2]{Kaplan}. It was shown in~\cite[Proposition 5.6]{balogh2001}  that $H$-type groups are polarizable with the norm $ N_G$, defined in~\eqref{eq:normHtype}.

%%%%%%%%%%%%%%%%%%%%%%%%%

\subsubsection{Heisenberg group}

An example of $H$-type groups is the Heisenberg group.
\begin{definition}\label{Heisenberg group}
The $n$-dimensional Heisenberg group $\heis^n$ is an analytic, nilpotent Lie group whose underlying manifold is 
$\R^{2n+1}$, and whose Lie algebra $\mathfrak{h}$ is graded
\begin{itemize}
 \item[(1)] $\mathfrak{h}=V_1\oplus V_2$, where $\dim(V_1)=2n$ and $\dim(V_2)=1$, and
 \item[(2)] $\mathfrak{h}$  admits the following commutation relations: $$[V_1,V_1]=V_2,\quad [V_1,V_2]=[V_2,V_2]=\{0\}.$$
\end{itemize}
\end{definition}
Choose any inner product $\langle.\,,.\rangle$ on $\mathfrak{h}$, and define the map $J\colon V_2\to\End(V_1)$ as follows. Given any $U\in V_1$, define $\ad_U\colon V_1\to V_2 $ by $\ad_UV:=[U,V]$. Then the map $J$ is the formal adjoint map $J=\ad_U^*$ given by $\langle J_ZU,V\rangle=\langle Z,\ad_UV\rangle$. 

Using normal coordinates of the first kind $g=(x,y,t)$, $x,y\in\mathbb R^n$, $t\in\mathbb R$, we write the homogeneous norm, for instance, as 
\begin{equation} \label{hcomplexnorm}
N_G(x,y,t)=((|x|^2+|y|^2)^2+16t^2)^{1/4}.
\end{equation}

%%%%%%%%%%%%%%%%%%%%%%%%%%%%%%%%%%

\subsection{The $p$-module of $\Gamma(R_{ab};S_a,S_b)$ on polarizable groups}

%%%%%%%%%%%%%%%%%%%%%%%%%%%%%%%%%%

Let $B(g,r)$, $g\in G$, be an open ball with respect to the norm $N_G$ in a polarizable Carnot group $G$, and let $S_r=\partial B(g,r)$ be the boundary of $B(g,r)$. We want to present the extremal function and the extremal family of curves for the condenser $(R_{ab};S_a,S_b)$ in the problem of the $p$-module.

One of the results in~\cite{KorReim87} by Kor\'{a}nyi and Reimann is the precise value of $M_p(\Gamma((R_{ab};S_a,S_b)))$ in $\heis^1$, where $p=4$ is the homogeneous dimension of~$\heis^1$. The $p$-module of the family of curves for the spherical ring domain on $H$-type groups and polarizable Carnot groups,  in terms of $p$-capacity is given in~\cite{balogh2001,capogna1996}. It is 
known, that  the value of $p$-capacity on the Carnot groups, see Definition~\ref{def:p_capac}, and  of the $p$-module of the family of curves connecting $S_a$ and $S_b$ in the spherical ring domain coincide, see~\cite{markina2003}. We present here brief calculations of $M_p(\Gamma((R_{ab};S_a,S_b)))$ on a polarizable Carnot group, for the completeness.

We recall that if $c\colon [a,b]\to G$ is an absolutely continuous curve in a Carnot group $G$, which is not horizontal for some 
open subinterval $I\subset [a,b]$, then it is non-rectifiable. It was shown in~\cite{Pansu} that even if $c$ is only continuous and rectifiable, then the tangent vector $\dot c(s)$ exists and it is horizontal for almost all $s\in [a,b]$.
Thus, when computing the $p$-module of a family of curves, we can restrict 
ourselves to horizontal curves, because the $p$-module of a family of non-rectifiable curves vanishes~\cite{Fug}. 
Note also that  any system of curves for $0<p<1$ has vanishing $p$-module~\cite{Fug}.

Let $\phi$ be a solution to the Cauchy problem~\eqref{radialflow} satisfying the initial data $\phi(1,\xi)=\xi$, $\xi\in S_1$. The presence of the horizontal flow $\phi(\cdot,\xi)\colon (0,\infty)\to G$, allows us to write the integral over $G$ in terms of spherical coordinates. Namely, the following proposition holds.

\begin{proposition}\label{prop:sph_int}\cite{balogh2001}
Let $G$ be a polarizable Carnot group. There exists a unique Radon measure $dv$ on $S_1\setminus\mathcal Z$, such that the integration formula
\begin{equation} \label{polardecompG}
\int_G f(g)d\mathbf{g}=\int_{S_1\setminus\mathcal{Z}}\int_0^\infty f(\phi(s,\xi))s^{Q-1}ds\,dv(\xi)
\end{equation}
is valid for all $f\in L^1(G)$, where $d\mathbf{g}$ denotes the Haar measure on $G$.
\end{proposition}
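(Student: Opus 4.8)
The plan is to recognize the radial flow $\phi$ as a one-parameter group and then to transport the Haar measure through the smooth change-of-variables formula, so that the whole statement reduces to a factorization of the Jacobian of the polar map $\Phi(s,\xi)=\phi(s,\xi)$. First I would reparametrize the Cauchy problem~\eqref{radialflow} by $\tau=\log s$: since the vector field on its right-hand side carries the factor $1/s$, the function $\tilde\phi(\tau,g):=\phi(e^{\tau},g)$ solves the \emph{autonomous} equation $\partial_\tau\tilde\phi=W(\tilde\phi)$ with $W(g):=N_G(g)\,\nabla_0N_G(g)/\|\nabla_0N_G(g)\|_0^2$. Hence $\tilde\phi$ is a genuine flow and $\phi$ obeys the multiplicative group law $\phi(s_1s_2,g)=\phi(s_1,\phi(s_2,g))$. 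Together with property (i) of Proposition~\ref{lemmahorizflow}, which gives $N_G(\phi(s,\xi))=s$ whenever $\xi\in S_1$, this shows that
\[
\Phi\colon(0,\infty)\times(S_1\setminus\mathcal Z)\longrightarrow G\setminus\mathcal Z,\qquad \Phi(s,\xi)=\phi(s,\xi),
\]
is a diffeomorphism with inverse $g\mapsto\bigl(N_G(g),\,\phi(N_G(g)^{-1},g)\bigr)$.

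The heart of the argument, and the step I expect to be the main obstacle, is to show that the Jacobian of $\Phi$ factors as $s^{Q-1}$ times a function of $\xi$ alone. Fixing $s$ and writing $A:=D_g\phi(s,\cdot)\big|_{\xi}$, I would differentiate the flow relation along its own generator to obtain the commutation $A[W(\xi)]=W(\phi(s,\xi))$, while $\partial_s\phi(s,\xi)=\tfrac1s\,W(\phi(s,\xi))=\tfrac1s\,A[W(\xi)]$ follows directly from~\eqref{radialflow}. Parametrizing $S_1$ locally by $u=(u_1,\dots,u_{m-1})$, the columns of $D\Phi$ are $\partial_s\phi$ and the vectors $A[\partial_{u_j}\xi]$, and pulling $A$ out of the determinant yields
\[
J_\Phi(s,\xi)=\frac1s\,(\det A)\,\det\bigl[\,W(\xi)\mid\partial_{u_1}\xi\mid\cdots\mid\partial_{u_{m-1}}\xi\,\bigr]=s^{\,Q-1}J_0(\xi),
\]
where $\det A=s^Q$ by property (iii) of Proposition~\ref{lemmahorizflow} and $J_0(\xi)$ is the remaining, $s$-independent, determinant, nonzero because the flow is transverse to $S_1$. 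The delicate points here are the commutation identity $A[W]=W\circ\phi$ and the clean separation of the radial column, which is exactly where the group structure and properties (i), (iii) must be combined.

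Finally I would assemble the formula. Since $\mathcal Z$ carries vanishing Haar measure, integration over $G$ and over $G\setminus\mathcal Z$ coincide; applying the change-of-variables formula through the diffeomorphism $\Phi$ and inserting the factorization gives
\[
\int_G f\,d\mathbf{g}=\int_{S_1\setminus\mathcal Z}\int_0^{\infty}f(\phi(s,\xi))\,s^{Q-1}\,ds\,dv(\xi),\qquad dv(\xi):=|J_0(\xi)|\,dH^{m-1}(\xi),
\]
first for $f\in C_c(G)$ and then for all $f\in L^1(G)$ by a standard density argument. Uniqueness of $dv$ I would obtain by testing this identity against product functions of the form $f(g)=\alpha(N_G(g))\,\beta(\phi(N_G(g)^{-1},g))$ and varying $\alpha$ and $\beta$, which pins down $dv$ among all Radon measures on $S_1\setminus\mathcal Z$.
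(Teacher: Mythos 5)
First, a point of reference: the paper itself gives no proof of Proposition~\ref{prop:sph_int}; it is imported verbatim from \cite{balogh2001}. So your proposal can only be measured against the argument in that reference, not against anything in this text.

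Your route---reparametrize by $\tau=\log s$ to get an autonomous flow, deduce the group law $\phi(s_1s_2,g)=\phi(s_1,\phi(s_2,g))$, and factor the Jacobian of $\Phi(s,\xi)=\phi(s,\xi)$---is sound in its core. The commutation identity $A[W(\xi)]=W(\phi(s,\xi))$ is the standard differentiation of $\tilde\phi_\tau\circ\tilde\phi_\sigma=\tilde\phi_\sigma\circ\tilde\phi_\tau$ at $\sigma=0$; combined with $\det A=s^{Q}$ from property (iii) of Proposition~\ref{lemmahorizflow} and the transversality $dN_G[W(\xi)]=\|\nabla_0N_G(\xi)\|_0^{2}\,N_G(\xi)/\|\nabla_0N_G(\xi)\|_0^{2}=1\neq 0$, it does give $J_\Phi(s,\xi)=s^{Q-1}J_0(\xi)$ with $J_0\neq 0$, and your uniqueness argument via product test functions works. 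Two points, however, need repair or explicit citation. (a) The definition $dv:=|J_0(\xi)|\,dH^{m-1}(\xi)$ is not meaningful as written: $J_0$ is defined through a local parametrization $u\mapsto\xi(u)$ and changes by the Jacobian of any reparametrization, so it is not a function of $\xi$ alone. The correct statement is that $|J_0|\,du$ defines a measure chart by chart (consistent under change of charts), equivalently $dv=|J_0|\bigl(\det(D\xi^{T}D\xi)\bigr)^{-1/2}dH^{m-1}$. This is easily fixed but, as stated, the displayed identification is incorrect. (b) You use silently that $\mathbf{g}(\mathcal Z)=0$, that the flow exists for all $s\in(0,\infty)$, and that every point of $G\setminus\mathcal Z$ lies on a flow line through $S_1\setminus\mathcal Z$. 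None of these is formal: the measure-zero statement in particular depends on polarizability and on properties of Folland's solution, is proved in \cite{balogh2001}, and is logically indispensable here, since the right-hand side of \eqref{polardecompG} never sees $\mathcal Z$. Given that Proposition~\ref{lemmahorizflow} is cited, these facts may be cited as well, but they must be named as hypotheses, not absorbed into ``standard'' steps.

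It is also worth contrasting your proof with the one in \cite{balogh2001}, which is measure-theoretic and avoids both issues above. Using only the group law and the scaling $\det D_g\phi(s,\cdot)=s^{Q}$, for a Borel set $A\subset S_1\setminus\mathcal Z$ and the polar rectangle $A_{(a,b)}=\{\phi(s,\xi)\colon\xi\in A,\ a<s<b\}$ one has $A_{(0,b)}=\phi(b,\cdot)\bigl(A_{(0,1)}\bigr)$, hence $\mathbf{g}(A_{(a,b)})=(b^{Q}-a^{Q})\,\mathbf{g}(A_{(0,1)})$. Setting $v(A):=Q\,\mathbf{g}(A_{(0,1)})$ makes \eqref{polardecompG} hold for indicators of polar rectangles; these form a $\pi$-system generating the Borel $\sigma$-algebra of $G\setminus\mathcal Z$, so a monotone-class argument extends the identity to all of $L^1$, and uniqueness is automatic because $v(A)$ is forced by $f=\mathbf{1}_{A_{(a,b)}}$. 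That route needs no smooth-hypersurface structure on $S_1\setminus\mathcal Z$, no joint Jacobian, and no Gram-determinant bookkeeping; your route, once repaired, buys something the original does not state explicitly, namely a pointwise coordinate formula for the density of $dv$.
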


Observe that the only information one needs to carry on the construction of spherical coordinates and forthcoming calculation of $M_p(\Gamma((R_{ab};S_a,S_b)))$ on a polarizable Carnot group is the
existence of homogeneous norm $N_G$. 

\begin{theorem}~\label{ThmodulusCarnot}
 Let $G$ be a polarazible Carnot group of Hausdorff dimension $Q$ with a homogeneous norm $N_G$ associated to Folland's 
solution to the Kohn sub-Laplacian. Let $\Gamma=\Gamma(R_{ab};S_a,S_b)$ be a family of horizontal locally rectifiable curves connecting the boundaries $S_a$ and $S_b$ in $R_{ab}$.
Then, for $p>1$,
$$
M_p(\Gamma)=C_{S_1}(p)C_{ab}^{1-p}(p,Q),
$$
where 
$$
 C_{S_1}(p)=\int_{S_1\setminus\mathcal{Z}}\lambda^{p}(\xi)\,dv(\xi),
\quad\text{and}\quad 
C_{ab}(p,Q):=\int_a^{b}s^{\frac{1-Q}{p-1}}\,ds.
$$
\end{theorem}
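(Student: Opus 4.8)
The plan is to identify the extremal family with the radial horizontal flow $\Gamma_0=\{\phi(\cdot,\xi):\xi\in S_1\setminus\mathcal Z\}$ restricted to $s\in[a,b]$, to produce an explicit candidate extremal function $\rho_0$, and then to pinch $M_p(\Gamma)$ from both sides. Since $\Gamma_0\subset\Gamma$, monotonicity (Proposition~\ref{modulusprop}, item~(1)) gives $M_p(\Gamma_0)\le M_p(\Gamma)$; conversely, once $\rho_0$ is shown to be admissible for the \emph{whole} family $\Gamma$ we get $M_p(\Gamma)\le\int_{R_{ab}}\rho_0^p\,d\mathbf g$. Hence it suffices to establish three facts: that $\int_{R_{ab}}\rho_0^p\,d\mathbf g=C_{S_1}(p)C_{ab}^{1-p}(p,Q)$, that every $\rho$ admissible for $\Gamma_0$ already has $\int\rho^p\,d\mathbf g$ at least this value, and that $\rho_0$ is admissible for $\Gamma$. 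Throughout I may restrict to horizontal, locally rectifiable curves, since all other curves carry vanishing $p$-module.

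For the candidate I would take
\[
\rho_0(g)=C_{ab}^{-1}(p,Q)\,\|\nabla_0N_G(g)\|_0\,N_G(g)^{\frac{1-Q}{p-1}}.
\]
Using the spherical coordinates of Proposition~\ref{prop:sph_int} together with the flow identities of Proposition~\ref{lemmahorizflow}, namely $N_G(\phi(s,\xi))=s$, $\|\nabla_0N_G(\phi(s,\xi))\|_0=\lambda(\xi)$ and the Jacobian weight $s^{Q-1}$, the integral $\int_{R_{ab}}\rho_0^p\,d\mathbf g$ factors as $\int_{S_1\setminus\mathcal Z}\lambda^p\,dv$ times $C_{ab}^{-p}\int_a^b s^{p(1-Q)/(p-1)+Q-1}\,ds$. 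A short computation collapses the $s$-exponent to $(1-Q)/(p-1)$, so the radial integral equals $C_{ab}^{-p}\,C_{ab}=C_{ab}^{1-p}$ and we obtain the claimed value. The same identities show $\int_{c_\xi}\rho_0\,ds=1$ along each radial curve, so $\rho_0$ is admissible for $\Gamma_0$.

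For the lower bound I would argue on $\Gamma_0$ directly. Along a radial curve the arc-length element is $\lambda(\xi)^{-1}\,ds$, so admissibility of $\rho$ reads $\int_a^b(\rho\circ\phi)(s,\xi)\,\lambda(\xi)^{-1}\,ds\ge1$. Applying Hölder with exponents $p,q$ against the weight $s^{(Q-1)/p}$ yields
\[
\int_a^b(\rho\circ\phi)^p(s,\xi)\,s^{Q-1}\,ds\ \ge\ \lambda(\xi)^p\,C_{ab}^{1-p}(p,Q),
\]
and integrating over $\xi$ in spherical coordinates gives $M_p(\Gamma_0)\ge C_{S_1}(p)C_{ab}^{1-p}(p,Q)$, with equality realized by $\rho_0$. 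Combined with the two inequalities above this already forces the three quantities $M_p(\Gamma_0)$, $M_p(\Gamma)$ and $\int\rho_0^p\,d\mathbf g$ to coincide, provided the final admissibility step holds.

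The hard part will be verifying that $\rho_0$ is admissible for the full family $\Gamma$, i.e.\ $\int_\gamma\rho_0\,ds\ge1$ for \emph{every} horizontal rectifiable $\gamma$ joining $S_a$ to $S_b$. Here I would use horizontality decisively: since $\dot\gamma$ is horizontal, $\tfrac{d}{d\tau}N_G(\gamma(\tau))=\langle\nabla_0N_G,\dot\gamma\rangle_0$, and Cauchy--Schwarz gives $\|\nabla_0N_G\|_0\,\|\dot\gamma\|_0\ge|\tfrac{d}{d\tau}N_G(\gamma(\tau))|$. Substituting into $\int_\gamma\rho_0\,ds$ and changing variables to $u=N_G(\gamma(\tau))$ bounds the line integral below by $C_{ab}^{-1}\bigl|\int_a^b u^{(1-Q)/(p-1)}\,du\bigr|=1$, using only that $\gamma$ runs between the levels $N_G=a$ and $N_G=b$. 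The delicate points I expect are justifying the absolute continuity of $N_G\circ\gamma$ and the fundamental theorem of calculus along a merely locally rectifiable horizontal curve, and controlling the characteristic set $\mathcal Z$, where $\nabla_0N_G$ vanishes and $\rho_0$ must be interpreted with care even though $\mathcal Z$ is negligible. Existence of an extremal function is guaranteed a priori for $p>1$ by Proposition~\ref{modulusprop}, item~(7).
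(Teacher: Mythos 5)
Your proposal is correct and follows essentially the same route as the paper: the same extremal function (your unified formula $C_{ab}^{-1}\|\nabla_0 N_G\|_0 N_G^{(1-Q)/(p-1)}$ coincides with the paper's case-split expression \eqref{eq:extr_curve}), the same H\"older-inequality argument along the radial flow in spherical coordinates for the lower bound, and the same Cauchy--Schwarz plus fundamental-theorem-of-calculus argument for admissibility of $\rho_0$ on all of $\Gamma$, which is exactly the paper's Lemma~\ref{Admisfunc}. The delicate points you flag (absolute continuity of $N_G\circ\gamma$ and neglecting $\mathcal Z$) are handled in the paper in the same brief way, so there is no gap relative to the published argument.
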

%%%%%%%Theorem p-module on polirizable groups

\begin{proof}
Let us use the integration in spherical coordinates~\eqref{polardecompG} in order to calculate the module 
of $\Gamma$.
For all admissible functions $\varrho$ we have
\begin{equation*}
 1\leqslant\left(\int_{\phi(\cdot,\xi)}\varrho\right)^p=\left(\int_a^{b}\varrho(\phi(s,\xi))\lambda(\xi)^{-1}ds\right)^p\quad\Longrightarrow\quad \lambda(\xi)\leq\int_{\phi(\cdot,\xi)}\varrho,
\end{equation*}
where  $\lambda(\xi)^{-1}=\|\frac{d\phi(s,\xi)}{ds}\|_0=\frac{1}{\|\nabla_0N_G(\phi(s,\xi))\|_0}$, $\xi\in S_1$. H\"{o}lder's inequality implies
\begin{multline*}
\lambda(\xi)^{p}\leqslant\left(\int_a^{b}\left(\varrho s^{\frac{Q-1}{p}}\right)s^{-\frac{Q-1}{p}}ds\right)^p\leqslant
\left[\left(\int_a^{b}\varrho^p s^{Q-1} ds\right)^{\frac{1}{p}}\left(\int_a^{b}s^{\frac{1-Q}{p-1}} ds\right)^{\frac{p-1}{p}}\right]^p\\
=\left(\int_a^{b}\varrho^p s^{Q-1} ds\right)\left(\int_a^{b}s^{\frac{1-Q}{p-1}} ds\right)^{p-1}.
\end{multline*}
Therefore
\begin{equation*}
 \int_a^{b}\varrho^p(\phi(s,\xi))s^{Q-1} ds\geqslant C_{ab}(p,Q)^{1-p}\lambda(\xi)^{p},	
\end{equation*}
and,
\begin{eqnarray*}
\int_{R_{ab}}\varrho^p(g)\,d\mathbf{g} & = & 
\int_{S_1\setminus\mathcal{Z}}\int_a^b \varrho^p(\phi(s,\xi))s^{Q-1}ds\,dv(\xi)
\\
& \geq &
C_{ab}(p,Q)^{1-p}\int_{S_1\setminus\mathcal{Z}}\lambda(\xi)^{p}\,dv(\xi)=C_{ab}(p,Q)^{1-p}C_{S_1}(p).	
\end{eqnarray*}
If we denote by $\Gamma_0$ the family of curves formed by the radial flow $\phi$, then $\Gamma_0$ is a subfamily of the family $\Gamma$. Taking infimum over admissible functions we obtain that
\begin{equation}
 M_p(\Gamma)\geq M_p(\Gamma_0)\geq C_{ab}(p,Q)^{1-p}C_{S_1}(p).
\end{equation}

To find an estimation from above for $ M_p(\Gamma)$ we present the extremal function on which this estimate is attained. It is given by
\begin{equation}\label{eq:extr_curve}
\varrho_0=
\begin{cases}\Big((\tau+1)C_{ab}(p,Q)\Big)^{-1}\,\|\nabla_0 \left(N_G^{\tau+1}\right)\|_0,\quad\tau+1=\frac{p-Q}{p-1},\quad &p\neq Q,
\\
C_{ab}(p,Q)^{-1}\,\|\nabla_0 (\log N_G)\|_0,\quad &p=Q.
\end{cases}
\end{equation}
Considering $\varrho_0$ along the flow $\phi(s,\xi)$ of radial curves we obtain
\begin{equation*}
\varrho_0(\phi(s,\xi))=C_{ab}(p,Q)^{-1}\,s^\tau\lambda(\xi).
\end{equation*}
Using the integration in spherical coordinates~\eqref{polardecompG} we calculate 
\begin{multline*}
 \int_{R_{ab}}\varrho_0^p\,d\mathbf{g}=\int_{S_1\setminus\mathcal{Z}}\int_a^b \varrho_0^p(\phi(s,\xi))s^{Q-1}ds\,dv(\xi)\\
=C_{ab}(p,Q)^{-p}\int_{S_1\setminus\mathcal{Z}}\int_a^b\,s^{\frac{p(1-Q)}{p-1}}\lambda(\xi)^{p}s^{Q-1}ds\,dv(\xi)\\
=C_{ab}(p,Q)^{-p}\int_{S_1\setminus\mathcal{Z}}\lambda(\xi)^{p}dv(\xi)\int_a^bs^{\frac{1-Q}{p-1}}ds
=C_{ab}(p,Q)^{1-p}C_{S_1}(p).
\end{multline*}
The function $\varrho_0$ is admissible for $\Gamma$ as it will be shown in the next  Lemma~\ref{Admisfunc}. Finally, we have  
\begin{equation*}
 M_p(\Gamma)\leqslant\int_{R_{ab}}\varrho_0^p\,d\mathbf{g}=C_{ab}(p,Q)^{1-p}C_{S_1}(p).
\end{equation*}
\end{proof}

\begin{lemma} \label{Admisfunc}
For all $p>1$, the function $\varrho_0$, defined by~\eqref{eq:extr_curve} is admissible for the module $M_p(\Gamma)$ of a
the family of curves connecting $S_a$ to $S_b$ in the spherical ring domain $R_{ab}$.
\end{lemma}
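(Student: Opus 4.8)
The plan is to realize $\varrho_0$ as the horizontal-gradient norm of an explicit radial potential $h$ on $R_{ab}$, and then to bound the line integral of $\varrho_0$ along an arbitrary connecting curve from below by the increment of $h$ between the two boundary spheres, which will turn out to be exactly $1$. This is the Carnot-group analogue of the Euclidean computation in Example~\ref{e7}, with the homogeneous norm $N_G$ playing the role of $|x|$ and the radial flow $\phi$ replacing the radial rays.

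First I would introduce the potential
$$
h=\begin{cases}\dfrac{1}{(\tau+1)C_{ab}(p,Q)}\,N_G^{\tau+1}, & p\neq Q,\quad \tau+1=\dfrac{p-Q}{p-1},\\[2mm] \dfrac{1}{C_{ab}(p,Q)}\,\log N_G, & p=Q,\end{cases}
$$
and verify, via the chain rule $\nabla_0(N_G^{\tau+1})=(\tau+1)N_G^{\tau}\nabla_0 N_G$ (respectively $\nabla_0\log N_G=N_G^{-1}\nabla_0 N_G$) together with $N_G>0$ on $R_{ab}$, that $\varrho_0=\|\nabla_0 h\|_0$ (the constant being normalized so that $\varrho_0\geq 0$). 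Because $h$ is a smooth, strictly monotone function of $N_G$ and $N_G$ is smooth away from the characteristic set $\mathcal Z$, this identity holds at every point of $R_{ab}\setminus\mathcal Z$, while both sides vanish on $\mathcal Z$.

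Next, let $\gamma\colon[\alpha,\beta]\to R_{ab}$ be any horizontal locally rectifiable curve connecting $S_a$ and $S_b$, parametrized by arc length, so that $\dot\gamma(s)\in H_{\gamma(s)}G$ and $\|\dot\gamma(s)\|_0=1$ for a.e.\ $s$. Since $N_G$ is Lipschitz in the Carnot--Carath\'eodory distance and $\gamma$ is rectifiable, the superposition $h\circ\gamma$ is absolutely continuous, so the fundamental theorem of calculus applies. On the full-measure set where $\gamma(s)\notin\mathcal Z$, the chain rule and the Cauchy--Schwarz inequality for $\langle\cdot\,,\cdot\rangle_0$ give $\big|\tfrac{d}{ds}(h\circ\gamma)(s)\big|=\big|\langle\nabla_0 h(\gamma(s)),\dot\gamma(s)\rangle_0\big|\leq\|\nabla_0 h(\gamma(s))\|_0=\varrho_0(\gamma(s))$. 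Integrating,
$$
\int_\gamma\varrho_0\,ds=\int_\alpha^\beta\varrho_0(\gamma(s))\,ds\geq\Big|\int_\alpha^\beta\tfrac{d}{ds}(h\circ\gamma)\,ds\Big|=\big|h(\gamma(\beta))-h(\gamma(\alpha))\big|.
$$
As $h$ depends only on $N_G$ and the endpoints have $N_G=a$ and $N_G=b$, the right-hand side equals $|\,h|_{N_G=b}-h|_{N_G=a}|$, which a direct computation—using $C_{ab}(p,Q)=\frac{b^{\tau+1}-a^{\tau+1}}{\tau+1}$ for $p\neq Q$ and $C_{ab}(Q,Q)=\log(b/a)$—shows to be exactly $1$. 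Hence $\int_\gamma\varrho_0\,ds\geq1$ for every $\gamma\in\Gamma$, which is the claimed admissibility.

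The main obstacle is the behaviour on $\mathcal Z=\{0\}\cup\{g:\nabla_0 N_G(g)=0\}$, where $\varrho_0$ vanishes and the chain-rule estimate degenerates; the integration above is only legitimate if the excised set $\gamma^{-1}(\mathcal Z)$ is $\mathcal L^1$-negligible under arc-length parametrization. This is where the sub-Riemannian geometry enters: the characteristic directions are transverse to the horizontal distribution, so at a.e.\ point of $\gamma^{-1}(\mathcal Z)$ the horizontal velocity $\dot\gamma$ must vanish (a density-point argument shows the relevant coordinate derivatives are zero, and horizontality then forces the remaining ones to vanish as well), which is incompatible with $\|\dot\gamma\|_0=1$ except on a null set. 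I expect this transversality step to require the most care in a general polarizable group; if one prefers to bypass it, an alternative is to invoke the a.e.\ horizontal differentiability of the Lipschitz function $N_G$ along horizontal curves, giving $\big|\tfrac{d}{ds}(N_G\circ\gamma)\big|\leq\|\nabla_0 N_G(\gamma)\|_0$ a.e.\ directly, and then multiply by $|\eta'(N_G)|$, where $h=\eta\circ N_G$.
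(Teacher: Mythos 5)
Your proposal is correct and is in substance the paper's own proof: the paper likewise combines the Cauchy--Schwarz inequality $\langle\nabla_0 N_G,\dot\gamma\rangle_0\leq\|\nabla_0 N_G\|_0$ for unit-speed horizontal curves with the Fundamental Theorem of Calculus applied to the normalized potential $N_G^{\tau+1}\big/\bigl((\tau+1)C_{ab}(p,Q)\bigr)$ (resp.\ $\log N_G\big/C_{ab}(Q,Q)$), whose increment between $S_a$ and $S_b$ is exactly $1$; packaging the integrand as $\|\nabla_0 h\|_0$ for a potential $h$ is only a cosmetic reorganization.

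One correction worth recording: the ``main obstacle'' you anticipate at the characteristic set $\mathcal Z$ does not exist. Since $0\notin R_{ab}$, the norm $N_G$ is smooth on all of $R_{ab}$ (including at points of $\mathcal Z$), and for a \emph{horizontal} velocity the chain rule reads $\tfrac{d}{ds}(h\circ\gamma)(s)=\dot\gamma(s)(h)=\langle\nabla_0 h(\gamma(s)),\dot\gamma(s)\rangle_0$ by the defining property of the horizontal gradient; at characteristic points both sides vanish, so the estimate $\bigl|\tfrac{d}{ds}(h\circ\gamma)\bigr|\leq\varrho_0(\gamma(s))$ holds there with no excision of $\gamma^{-1}(\mathcal Z)$ and no transversality argument. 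This is exactly your ``alternative'' route and is what the paper does implicitly; the density-point/transversality claim, whose validity in a general polarizable group is unclear, should simply be dropped.
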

\begin{proof}
Let $\gamma\colon [0,l_{\gamma}]\to G$ be any curve in $\Gamma$ parametrized by arc-length: $\|\dot\gamma\|_0=1$, and such that $a=N(\gamma(0))$ and 
$b=N(\gamma(l_{\gamma}))$. Then, by the Schwarz inequality we have 
$\langle\nabla_0 N,\dot{\gamma}(s)\rangle_0\leqslant\|\nabla_0 N\|_0$ for almost all $s\in[0,l_{\gamma}]$. It follows that for $p\neq Q$, 
\begin{eqnarray*}
 \int_{\gamma}\varrho_0 
 & = & \int_0^{l_{\gamma}}\varrho_0(\gamma(s))ds
 \\
 & = &
\Bigl((\tau+1)C_{ab}(p,Q)\Bigr)^{-1}\int_0^{l_{\gamma}}(\tau+1)N^\tau(\gamma(s))\|\nabla_0 N(\gamma(s))\|_0\,ds
\\
& \geq & \Bigl((\tau+1)C_{ab}(p,Q)\Bigr)^{-1}\int_0^{l_{\gamma}} (\tau+1)N^\tau(\gamma(s))\langle\nabla_0 N(\gamma(s)),\dot{\gamma}(s)\rangle ds
\\
& = & \Bigl((\tau+1)C_{ab}(p,Q)\Bigr)^{-1}\int_0^{l_{\gamma}}\frac{d}{ds}N^{\tau+1}(\gamma(s))ds.
\end{eqnarray*}
Since $N^{\tau+1}(\gamma)\colon [0,l_{\gamma}]\to R_{ab}$ is absolutely continuous, the Fundamental Theorem of Calculus results in
\begin{equation*}
\int_{\gamma}\varrho_0\geq \Bigl((\tau+1)\int_a^bs^\tau\,ds\Bigr)^{-1}\Bigl(N^{\tau+1}(\gamma(l_{\gamma}))-N^{\tau+1}(\gamma(0))\Bigr)=1.
\end{equation*}
For $p=Q$, we obtain
\begin{eqnarray*}
 \int_{\gamma}\varrho_0
 & = & \int_0^{l_{\gamma}}\varrho_0(\gamma(s))ds
  = 
\Bigl(C_{ab}(p,Q)\Bigr)^{-1}\int_0^{l_{\gamma}}N^{-1}(\gamma(s))\|\nabla_0 N(\gamma(s))\|_0\,ds
\\
& \geq & 
\Bigl(C_{ab}(p,Q)\Bigr)^{-1}\int_0^{l_{\gamma}}N^{-1}(\gamma(s))\langle\nabla_0 N(\gamma(s)),\dot{\gamma}(s)\rangle ds
\\
& = & 
\Bigl(\int_a^b\frac{ds}{s}\Bigr)^{-1}\int_0^{l_{\gamma}}\frac{d}{ds}\log N(\gamma(s))ds=1.
\end{eqnarray*}
 \end{proof}

\begin{corollary}
The family of radial curves $\Gamma_0$ satisfying~\eqref{radialflow} is the extremal family for the module $R_{ab}$ of the spherical ring domain $R_{ab}$ on polarizable Carnot groups. The function $\varrho_0$ given by~\eqref{eq:extr_curve} is the extremal function. Moreover, calculating the integral $C_{ab}(p,Q)^{1-p}$, we obtain
\begin{equation*} 
M_p(\Gamma)=
\begin{cases}
C_{S_1}(p)\Bigl(\frac{|p-Q|}{p-1}\Bigr)^{p-1}\Bigl|(b^{\frac{p-Q}{p-1}}-a^{\frac{p-Q}{p-1}})\Bigr|^{1-p},\quad & p\neq Q,
\\
C_{S_1}(p)\Bigl(\log\frac{b}{a}\Bigr)^{1-Q},& p=Q.
\end{cases}
\end{equation*}
\end{corollary}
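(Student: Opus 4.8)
The plan is to read off the corollary from Theorem~\ref{ThmodulusCarnot} and Lemma~\ref{Admisfunc}, which together already yield the two-sided bound
\[
M_p(\Gamma)=M_p(\Gamma_0)=C_{S_1}(p)\,C_{ab}(p,Q)^{1-p}
\]
and the admissibility of $\varrho_0$ for the whole family $\Gamma$. Since these lower and upper estimates coincide and are attained at $\varrho_0$, the function $\varrho_0$ in \eqref{eq:extr_curve} is by definition extremal, which disposes of the second claim. The only genuinely new pieces of work are (a) checking that the radial family $\Gamma_0$ meets Fuglede's definition of an extremal family, and (b) evaluating the constant $C_{ab}(p,Q)^{1-p}$ in closed form.

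For (a), I would verify that $\int_\gamma\varrho_0=1$ along each radial curve $\gamma\in\Gamma_0$. Using the representation $\varrho_0(\phi(s,\xi))=C_{ab}(p,Q)^{-1}s^\tau\lambda(\xi)$ obtained in the proof of Theorem~\ref{ThmodulusCarnot}, together with Proposition~\ref{lemmahorizflow}(ii), by which $\|\partial_s\phi\|_0=\lambda(\xi)^{-1}$ is independent of $s$, one computes
\[
\int_{\phi(\cdot,\xi)}\varrho_0=\int_a^b\varrho_0(\phi(s,\xi))\,\lambda(\xi)^{-1}\,ds=C_{ab}(p,Q)^{-1}\int_a^b s^{\tau}\,ds=1,
\]
the last equality holding because $\tau=\frac{p-Q}{p-1}-1=\frac{1-Q}{p-1}$, so that $\int_a^b s^\tau\,ds=C_{ab}(p,Q)$. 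Combined with the equality $M_p(\Gamma)=M_p(\Gamma_0)$ from the theorem, this is precisely the pair of conditions defining an extremal family, establishing the first claim; in fact the normalization holds for every $\xi\in S_1\setminus\mathcal Z$, not merely almost everywhere.

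For (b), I would split according to whether the exponent $\frac{1-Q}{p-1}$ equals $-1$. If $p\neq Q$ it does not, and integrating the power gives
\[
C_{ab}(p,Q)=\int_a^b s^{\frac{1-Q}{p-1}}\,ds=\frac{p-1}{p-Q}\Bigl(b^{\frac{p-Q}{p-1}}-a^{\frac{p-Q}{p-1}}\Bigr).
\]
Raising to the power $1-p<0$ and recording that $C_{ab}(p,Q)>0$ forces the factors $\frac{p-1}{p-Q}$ and $b^{\frac{p-Q}{p-1}}-a^{\frac{p-Q}{p-1}}$ to share a sign, so both may be replaced by their absolute values; this yields the first branch after multiplication by $C_{S_1}(p)$. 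If $p=Q$ the exponent is $-1$, the integral is logarithmic, $C_{ab}(Q,Q)=\log(b/a)$, and $C_{ab}(Q,Q)^{1-Q}=(\log(b/a))^{1-Q}$ gives the second branch.

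The argument is essentially bookkeeping once Theorem~\ref{ThmodulusCarnot} is available; the only step requiring a little care is the sign analysis in case (b), since $b^{\frac{p-Q}{p-1}}-a^{\frac{p-Q}{p-1}}$ is negative exactly when $p<Q$ while $p-Q<0$ there as well, and it is the absolute-value formulation that makes a single formula valid uniformly for $p<Q$ and $p>Q$.
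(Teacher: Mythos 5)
Your proposal is correct and follows exactly the route the paper intends: the corollary is stated there without a separate proof, being an immediate consequence of Theorem~\ref{ThmodulusCarnot} together with Lemma~\ref{Admisfunc} (which sandwich $M_p(\Gamma_0)$ between $M_p(\Gamma)$ and $C_{S_1}(p)C_{ab}(p,Q)^{1-p}$), plus the elementary evaluation of $C_{ab}(p,Q)=\int_a^b s^{(1-Q)/(p-1)}\,ds$ in the two cases $p\neq Q$ and $p=Q$. Your verification that $\int_{\phi(\cdot,\xi)}\varrho_0=C_{ab}(p,Q)^{-1}\int_a^b s^{\tau}\,ds=1$ using Proposition~\ref{lemmahorizflow}(ii), and your sign analysis justifying the absolute values in the closed-form expression, fill in precisely the bookkeeping the paper leaves implicit.
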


As it was mentioned,  the $H$-type groups are polarizable~\cite[Proposition 5.6]{balogh2001}, and the form of the homogeneous norm is given by~\eqref{eq:normHtype}. This allows us to calculate precisely the value of the constant $C_{S_1}(p)$:  
\begin{equation*} 
 C_{S_1}(p)=\int_{S_1\setminus\mathcal{Z}}\lambda(\xi)^{p}\,dv(\xi)=\frac{2\pi^{k+l/2}\,\Gamma\left(\frac{k+p}{4}\right)}
{4^l\Gamma\left(\frac{k}{2}\right)\Gamma\left(\frac{k+2l+p}{4}\right)},
\end{equation*}
where $k=\dim V_1$, $l=\dim V_2$.
The details can be found in~\cite{balogh2001}.

%%%%%%%%%%%%%%%%%%%%%%%%%%%%%%%%%%

\subsection{The $p$-module of a family of separating sets in $R_{ab}$}

%%%%%%%%%%%%%%%%%%%%%%%%%%%%%%%%%%

We recall the definition of separating sets, given in Section~\ref{sep_sets}. Unfortunately, some technical difficulties do not allow us to consider the separating sets in full generality in this section. 
Let $\Sigma=\Sigma(R_{ab}; S_a,S_b)$ denote the class of all countably $H^{Q-1}$-rectifiable sets that separate $S_a$ from $S_b$ in $R_{ab}\subset G$. With every $\sigma\subset\Sigma$ we associate a complete measure $\mu$ in the following way: for every Hausdorff $H^{Q-1}$-measurable set $A\subset G$ define 
\[
 \mu(A)=\mathcal{H}_H^{Q-1}(A\cap\sigma\cap R_{ab}),
\]
where $Q$ is the Hausdorff dimension of the group $G$. Let $E$ denote the family of such measures associated with $\Sigma$. Let us describe this measures on spheres $S_s$ in details. The integration formula~\eqref{polardecompG} implies that the volume element $d\mathbf g$ along the flow defined by $\phi$ can be written as 
$$
d\mathbf g=s^{Q-1}dsdv(\xi)=s^{Q-1}\Big\|\frac{\partial\phi}{\partial s}\Big\|_0^{-1}dv(\xi)\Big\|\frac{\partial\phi}{\partial s}\Big\|_0ds=s^{Q-1}\lambda(\xi)dv(\xi)\lambda(\xi)^{-1}ds.
$$
Therefore, the measure $dS_1(\xi)=\lambda(\xi)dv(\xi)$, $\xi\in S_1$ is absolutely continuous with respect to the Radon measure $dv(\xi)$ and represents an $H^{Q-1}$ dimensional surface measure on the unit sphere $S_1$. The element of the surface area on the sphere $S_{s}$ of radius $s$ is given by $dS_s=s^{Q-1}\lambda(\xi)dv(\xi)$. The part $d\phi=\lambda(\xi)^{-1}ds$ defines the element of  length of the curve $\phi(\cdot,\xi)$. In the case $G=\mathbb R^k$, we obtain that $\lambda(\xi)\equiv 1$, and $dS_1=dv$ is the usual surface element on the unit sphere. 

\begin{theorem}
\label{th:ModSerface}
 Let $G$ be a polarazible Carnot group of Hausdorff dimension $Q$ with the homogeneous norm $N_G$ associated to Folland's 
solution to the Kohn sub-Laplacian. Let $E$ be the family of measures associated with sets separating $S_a$ and $S_b$ in $R_{ab}$. Then for $q>1$ we obtain
\begin{equation*} 
M_q(E)=K_{ab}(q,Q)K_{S_1}^{1-q}(q),
\end{equation*}
 where 
\begin{equation}\label{eq:constants2} 
K_{S_1}(q)=\int_{S_1\setminus\mathcal{Z}}\lambda^{\frac{q}{q-1}}(\xi)\,dv(\xi),\quad
K_{ab}(q,Q)=\int_a^{b}s^{(1-q)(Q-1)}ds.
\end{equation}
\end{theorem}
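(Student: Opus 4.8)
The proof will mirror the structure of the proof of Theorem~\ref{ThmodulusCarnot} for connecting curves, but now working on the spheres $S_s$ rather than along the radial flow. The key is to exploit the spherical decomposition~\eqref{polardecompG} and the surface measure $dS_s=s^{Q-1}\lambda(\xi)\,dv(\xi)$ derived just before the statement, which allows us to integrate over the separating spheres $\sigma_s=S_s$ and then over the radial parameter $s\in[a,b]$. The strategy is the standard two-sided estimate: first establish the lower bound $M_q(E)\geq K_{ab}(q,Q)K_{S_1}^{1-q}(q)$ for \emph{every} admissible $\varrho$ using H\"older's inequality, and then exhibit an explicit extremal function $\varrho_0$ attaining this value, so that the reverse inequality $M_q(E)\leq\int\varrho_0^q\,d\mathbf g$ completes the identification.

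\textbf{Lower bound.} Let $\varrho$ be admissible for $E$, so that for $M_q$-almost every separating set, in particular for each sphere $S_s$ with $a\le s\le b$, one has $\int_{S_s}\varrho\,dS_s\ge 1$. Writing this out in spherical coordinates gives $\int_{S_1\setminus\mathcal Z}\varrho(\phi(s,\xi))\,s^{Q-1}\lambda(\xi)\,dv(\xi)\ge 1$. I would then split the integrand as $\bigl(\varrho\,\lambda^{1/q}s^{(Q-1)/q}\bigr)\cdot\bigl(\lambda^{1/q'}s^{(Q-1)/q'}\bigr)$ with $1/q+1/q'=1$ and apply H\"older over $S_1\setminus\mathcal Z$ with respect to $dv$, which produces a factor $\bigl(\int_{S_1\setminus\mathcal Z}\varrho^q s^{Q-1}\lambda\,dv\bigr)^{1/q}$ times $K_{S_1}(q)^{1/q'}$. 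Rearranging yields a pointwise-in-$s$ inequality $\int_{S_s}\varrho^q s^{Q-1}\lambda\,dv\ge K_{S_1}(q)^{1-q}$; integrating this over $s\in[a,b]$ against $s^{(1-q)(Q-1)}\,ds$ and invoking~\eqref{polardecompG} to recognize the left-hand side as $\int_{R_{ab}}\varrho^q\,d\mathbf g$ gives the claimed lower bound after taking the infimum over admissible $\varrho$.

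\textbf{Extremal function and upper bound.} For the matching upper estimate I would guess the extremal measure to be supported equidistributionally over the spheres, with density
\[
\varrho_0(\phi(s,\xi))=K_{S_1}(q)^{-1}K_{ab}(q,Q)^{-1}\,s^{(1-q)(Q-1)/(q-1)}\lambda(\xi)^{1/(q-1)},
\]
chosen precisely so that H\"older becomes an equality; one checks $\int_{S_s}\varrho_0\,dS_s=1$ for each $s$ using the definition~\eqref{eq:constants2} of $K_{S_1}$ and $K_{ab}$, establishing admissibility for $E_0$ (the subfamily of spheres), and then computes $\int_{R_{ab}}\varrho_0^q\,d\mathbf g=K_{ab}(q,Q)K_{S_1}(q)^{1-q}$ directly via~\eqref{polardecompG}. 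Since $E_0\subset E$ and monotonicity gives $M_q(E)\le M_q(E_0)\le\int\varrho_0^q\,d\mathbf g$, this closes the argument.

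\textbf{Main obstacle.} The delicate point is admissibility of $\varrho_0$ for the \emph{entire} class $E$ of arbitrary countably $H^{Q-1}$-rectifiable separating sets, not merely for the concentric spheres $S_s$; this is the sub-Riemannian analogue of the difficulty flagged in Beurling's criterion and in Example~\ref{e4}. One must verify that $\int_\sigma\varrho_0\,d\mu\ge 1$ for every separating $\sigma$, which requires a geometric argument---essentially that any set separating $S_a$ from $S_b$ must, when projected along the radial flow $\phi$, cover the unit sphere, together with a coarea/divergence estimate controlling how $H^{Q-1}$-measure transforms under $\phi$. I expect this step to demand the rectifiability hypothesis on $\Sigma$ and careful use of Proposition~\ref{lemmahorizflow}(iii) on the Jacobian $\det D_g\phi=s^Q$, and it is where the restriction to polarizable groups and the technical caveat mentioned before the theorem genuinely enter.
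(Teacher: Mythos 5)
Your overall architecture (H\"older on each sphere plus radial integration for the lower bound, an explicit density of the form $s^{1-Q}\lambda^{1/(q-1)}$ for the upper bound, and admissibility for all of $E$ as the hard point) is the same as the paper's, but the argument as written has a genuine gap at the upper bound. Your closing step invokes monotonicity backwards: since $E_0\subset E$, Proposition~\ref{modulusprop}(1) gives $M_q(E_0)\le M_q(E)$, not $M_q(E)\le M_q(E_0)$, so verifying $\int_{S_s}\varrho_0\,dS_s=1$ only on the concentric spheres bounds $M_q(E_0)$ from above and says nothing about $M_q(E)$. You do acknowledge in your final paragraph that admissibility of $\varrho_0$ for \emph{every} countably $H^{Q-1}$-rectifiable separating set is what is really needed, but what you offer there --- a radial-flow projection argument controlled by $\det D_g\phi(s,g)=s^Q$ --- is a description of what must be proved, not a proof: the flow Jacobian controls volume, not how the $(Q-1)$-dimensional Hausdorff measure of a rectifiable set transforms under the flow projection, and no Lipschitz or coarea estimate of that kind is established (or available) in this generality on Carnot groups; indeed the paper explicitly says the lack of such approximation/transformation theorems is why it restricts to rectifiable separating sets. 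The paper closes this gap by Gehring's capacity method: Theorem~\ref{th:adm} (via Lemmas~\ref{lem:1} and~\ref{lem:2}) shows that $\|\nabla_0u\|_0^{p-1}$, with $u$ the $p$-capacity extremal of $(R_{ab};S_a,S_b)$, integrates over any $\sigma\in\Sigma$ to at least $\capac_p(R_{ab};S_a,S_b)$ --- using a truncated distance function to $\sigma$ as a test function in the weak $p$-sub-Laplace equation, $\beta$-neighborhoods $\sigma(\beta)$, integral means $f_r$, left translations and Fubini, the coarea formula, and Fuglede's convergence property (Proposition~\ref{modulusprop}, item (5)) --- and only then identifies $\rho_0=C_{ab}^{p-1}(p,Q)\,C_{S_1}^{-1}(p)\,\|\nabla_0 u\|_0^{p-1}$. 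That subsection is the real substance of the upper bound and is absent from your proposal.

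Separately, the computations you do spell out contain errors. In the lower bound, your H\"older splitting $(\varrho\,\lambda^{1/q}s^{(Q-1)/q})\cdot(\lambda^{1/q'}s^{(Q-1)/q'})$ produces as second factor $\bigl(\int_{S_1\setminus\mathcal Z}\lambda\, s^{Q-1}\,dv\bigr)^{1/q'}$, which is not $K_{S_1}(q)^{1/q'}=\bigl(\int_{S_1\setminus\mathcal Z}\lambda^{q/(q-1)}\,dv\bigr)^{1/q'}$; moreover the resulting $\lambda$-weighted quantity $\int_{S_1\setminus\mathcal Z}\varrho^q s^{Q-1}\lambda\,dv$, integrated against $s^{(1-q)(Q-1)}ds$, is not $\int_{R_{ab}}\varrho^q\,d\mathbf g$, because the spherical decomposition \eqref{polardecompG} carries the weight $s^{Q-1}\,dv\,ds$ with no factor $\lambda$. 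The paper's splitting avoids both problems: pull the constant $s^{Q-1}$ out of the sphere integral and apply H\"older to the product $\varrho\cdot\lambda$ alone, which gives $\int_{S_1\setminus\mathcal Z}\varrho^q\,dv\ge s^{-q(Q-1)}K_{S_1}^{1-q}(q)$ and then integrates correctly against $s^{Q-1}ds$. Finally, your candidate extremal function carries a spurious factor $K_{ab}^{-1}(q,Q)$: with it, $\int_{S_s}\varrho_0\,dS_s=K_{ab}^{-1}(q,Q)\ne 1$, so it is not even admissible for the spheres when $K_{ab}>1$, and one computes $\int_{R_{ab}}\varrho_0^q\,d\mathbf g=\bigl(K_{ab}(q,Q)K_{S_1}(q)\bigr)^{1-q}$ rather than the claimed $K_{ab}(q,Q)K_{S_1}^{1-q}(q)$. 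The correct normalization is $\varrho_0(\phi(s,\xi))=K_{S_1}^{-1}(q)\,s^{1-Q}\lambda^{1/(q-1)}(\xi)$, as in \eqref{eq:extr_func_surf}. Both of these are repairable, but the missing admissibility argument for arbitrary separating sets is not a detail: without it the stated equality for $M_q(E)$ is unproved.
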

\begin{proof}

Let $\rho$ be an admissible function for the family $E$. Then, for any sphere $S_s$, $a<s<b$ we have 
\begin{eqnarray*}
1 & \leq & \Big(\int_{S_s\setminus{\mathcal Z}}\rho(\phi(s,\xi))\, dS_s\Big)^q=\Big(\int_{S_1\setminus{\mathcal Z}}s^{Q-1}\rho(\phi(s,\xi))\lambda(\xi)dv(\xi)\Big)^q
\\
& \leq & s^{q(Q-1)}\Big(\int_{S_1\setminus{\mathcal Z}}\rho^q(\phi(s,\xi))dv(\xi)\Big)\Big(\int_{S_1\setminus{\mathcal Z}}\lambda^{\frac{q}{q-1}}(\xi)dv(\xi)\Big)^{q-1}.
\end{eqnarray*}
Thus,
$$
\int_{S_1\setminus{\mathcal Z}}\rho^q(\phi(s,\xi))dv(\xi)\geq s^{-q(Q-1)}\Big(\int_{S_1\setminus{\mathcal Z}}\lambda^{\frac{q}{q-1}}(\xi)dv(\xi)\Big)^{1-q}.
$$
Then, we arrive at the inequality
$$
\int_{R_{ab}}\rho^qd\mathbf g=\int_a^bs^{Q-1}ds\int_{S_1\setminus\mathcal Z}\rho^qdv\geq \int_a^{b}s^{(1-q)(Q-1)}\Big(\int_{S_1\setminus\mathcal Z}\lambda^{\frac{q}{q-1}}(\xi)dv(\xi)\Big)^{1-q}.
$$
Making use of notations~\eqref{eq:constants2}, we come to a lower bound for the module $M_q(E)$ of the family of separating sets
$$
M_q(E)\geq M_q(E_0)\geq K_{ab}(q,Q)K_{S_1}(q)^{1-q},
$$
where $E_0$ is the family of measures associated with the spheres $S_s=\{g\in G\mid N_G(g)=s\}$ for $a<s<b$ which separate the boundaries of the spherical ring domain $R_{ab}$.

Now we turn to the estimation of $M_q(E)$ from above. The extremal function in this case is given by the following expression
\begin{equation}\label{eq:extr_func_surf}
\rho_0(g)=
\begin{cases}
(\tau+1)^{\frac{1}{1-q}}K_{S_1}^{-1}(q)\|\nabla_0(N_{G}^{\tau+1}(g))\|_0^{\frac{1}{q-1}}, \tau=(q-1)(1-Q),& q\neq \frac{Q}{Q-1},
\\
K_{S_1}^{-1}(q)\|\nabla_0(\log N_G(g))\|_0^{\frac{1}{q-1}}, & q=\frac{Q}{Q-1}.
\end{cases}
\end{equation}
Restricting the value of $\rho_0$ to the sphere $N_G(g)=s$ we conclude that
$$
\rho_0(\varphi(s,\xi))=\rho_0(g)=K_{S_1}^{-1}(q)s^{1-Q}\lambda^{\frac{1}{q-1}}(\xi).
$$ 
Thus,
$$
\int_{R_{ab}}\rho_0^qd\mathbf g=K_{S_1}^{-q}(q)\int_a^bs^{Q-1+q(1-Q)}ds\int_{S_1\setminus\mathcal Z}\lambda^{\frac{q}{q-1}}dv(\xi)=K_{ab}(q,Q)K_{S_1}^{1-q}(q).
$$
The function $\rho_0$ is admissible for the family of separating sets as it will be proved in Subsection~\ref{subsec:adm_mod_surf}. Finally, taking the infimum over the admissible functions, we obtain
$$
M_q(E)\leq\int_{R_{ab}}\rho_0^qd\mathbf g=K_{ab}(q,Q)K_{S_1}^{1-q}(q).
$$
This finishes the proof.
\end{proof}

\begin{corollary}
The family of measures $E_0$ associated with the spheres $\Sigma_0=\{S_s,\ a<s<b\}$ is the extremal family for the module $M_q(E)$ of   sets $\Sigma$ separating the spheres $S_a$ and $S_b$ in the spherical ring domain $R_{ab}$ on polarizable Carnot groups. The function $\rho_0$ given by~\eqref{eq:extr_func_surf} is  extremal. In particular, $\int_{S_s}\rho_0\,dS_s=1$ for any sphere $S_s$, $a<s<b$.
\end{corollary}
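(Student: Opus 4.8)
The plan is to read the corollary off from Theorem~\ref{th:ModSerface} together with its proof, supplemented by a single explicit normalization computation. Recall from Section~\ref{introduction} that a subsystem $E_0\subseteq E$ is extremal for $M_q(E)$ once one knows both that $\int_G\rho_0\,d\mu=1$ for $M_q$-almost every $\mu\in E_0$ and that $M_q(E)=M_q(E_0)=\int_{R_{ab}}\rho_0^q\,d\mathbf g$. The second condition is already contained in the proof of Theorem~\ref{th:ModSerface}: there the family $E_0$ carried by the spheres $S_s$, $a<s<b$, appears in the sandwich
\[
M_q(E)\geq M_q(E_0)\geq K_{ab}(q,Q)K_{S_1}^{1-q}(q),
\]
while the candidate $\rho_0$ from~\eqref{eq:extr_func_surf} realizes the upper bound $M_q(E)\leq\int_{R_{ab}}\rho_0^q\,d\mathbf g=K_{ab}(q,Q)K_{S_1}^{1-q}(q)$. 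Since the upper and lower bounds agree, all three quantities coincide; in particular $M_q(E)=M_q(E_0)=\int_{R_{ab}}\rho_0^q\,d\mathbf g$, which already says that $\rho_0$ is extremal once its admissibility is known.

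First I would verify the normalization $\int_{S_s}\rho_0\,dS_s=1$, the only genuinely new computation. Using the restriction of $\rho_0$ to the sphere $N_G(g)=s$ recorded in the proof, namely $\rho_0(\phi(s,\xi))=K_{S_1}^{-1}(q)\,s^{1-Q}\lambda^{1/(q-1)}(\xi)$, together with the surface element $dS_s=s^{Q-1}\lambda(\xi)\,dv(\xi)$ coming from the spherical decomposition~\eqref{polardecompG}, one finds
\[
\int_{S_s}\rho_0\,dS_s=K_{S_1}^{-1}(q)\int_{S_1\setminus\mathcal Z}s^{1-Q}s^{Q-1}\lambda^{1/(q-1)}(\xi)\lambda(\xi)\,dv(\xi)=K_{S_1}^{-1}(q)\int_{S_1\setminus\mathcal Z}\lambda^{\frac{q}{q-1}}(\xi)\,dv(\xi),
\]
since the powers of $s$ cancel and the exponents of $\lambda$ add to $\tfrac{1}{q-1}+1=\tfrac{q}{q-1}$. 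By the definition of $K_{S_1}(q)$ in~\eqref{eq:constants2} the last integral equals $K_{S_1}(q)$, so $\int_{S_s}\rho_0\,dS_s=K_{S_1}^{-1}(q)K_{S_1}(q)=1$ for every $s\in(a,b)$. As each $\mu\in E_0$ is precisely the restriction of $\mathcal H_H^{Q-1}$ to such a sphere inside $R_{ab}$, this is exactly the condition $\int_G\rho_0\,d\mu=1$ for all $\mu\in E_0$.

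Combining the two ingredients then finishes the argument: the normalization supplies $\int_G\rho_0\,d\mu=1$ for every $\mu\in E_0$, while the sandwich supplies $M_q(E)=M_q(E_0)=\int_{R_{ab}}\rho_0^q\,d\mathbf g$, and together these are exactly the defining conditions for $E_0$ to be extremal and for $\rho_0$ to be the extremal function. I expect no serious obstacle internal to the corollary itself; its whole content is the short normalization plus the bookkeeping of equalities inherited from Theorem~\ref{th:ModSerface}. The one substantive hypothesis it rests on is the admissibility of $\rho_0$ for the entire class $\Sigma$ of separating sets, which is not checked here but deferred to Subsection~\ref{subsec:adm_mod_surf}; that admissibility, rather than the normalization, is the delicate point, in analogy with the planar phenomenon illustrated in Example~\ref{e4}.
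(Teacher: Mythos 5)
Your proposal is correct and follows essentially the same route as the paper: the corollary is read off from the sandwich $M_q(E)\geq M_q(E_0)\geq K_{ab}(q,Q)K_{S_1}^{1-q}(q)$ and the matching upper bound $M_q(E)\leq\int_{R_{ab}}\rho_0^q\,d\mathbf g$ established in the proof of Theorem~\ref{th:ModSerface}, supplemented by the normalization computation $\int_{S_s}\rho_0\,dS_s=K_{S_1}^{-1}(q)\int_{S_1\setminus\mathcal Z}\lambda^{\frac{q}{q-1}}\,dv=1$, which you carry out correctly. You also correctly identify that the only nontrivial external input is the admissibility of $\rho_0$ for the full family $E$, which the paper defers to Subsection~\ref{subsec:adm_mod_surf}.
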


Let us observe the following relations that reveal Theorems~\ref{ThmodulusCarnot} and~\ref{th:ModSerface}.

\begin{corollary}\label{cor:1-2} For $\frac{1}{p}+\frac{1}{q}=1$, Theorems~\ref{ThmodulusCarnot} and~\ref{th:ModSerface} imply 
\begin{itemize}
\item[1.] $K_{ab}(q,Q)=C_{ab}(p,Q)$,
\item[2.] $K_{S_1}(q)=C_{S_1}(p)$,
\item[3.] $M_p^{\frac{1}{p}}(\Gamma)M_q^{\frac{1}{q}}(\Sigma)=1$,
\item[4.] $\rho_0=C_{ab}^{p-1}(p,Q)C_{S_1}^{-1}(p)\varrho_0^{p-1}$, $p\neq Q$ 
\end{itemize}
\end{corollary}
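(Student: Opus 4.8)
The plan is to verify all four relations by direct computation, the only input being the algebraic consequences of the conjugacy condition $\frac{1}{p}+\frac{1}{q}=1$. First I would record the identities I will use repeatedly: from $\frac{1}{p}+\frac{1}{q}=1$ one gets $p+q=pq$, hence $(p-1)(q-1)=1$, and therefore $q-1=\frac{1}{p-1}$, $\frac{q}{q-1}=p$, $\frac{1}{q-1}=p-1$, and $\frac{1}{1-q}=1-p$. These substitutions are exactly what turn the $q$-indexed quantities of Theorem~\ref{th:ModSerface} into the $p$-indexed ones of Theorem~\ref{ThmodulusCarnot}. For item~1 I would compare exponents of $s$ in the defining integrals: in $K_{ab}(q,Q)$ the exponent is $(1-q)(Q-1)$, and since $1-q=-\frac{1}{p-1}$ this equals $\frac{1-Q}{p-1}$, the exponent in $C_{ab}(p,Q)$; as the limits of integration agree, the integrals coincide. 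For item~2 the integrands are $\lambda^{q/(q-1)}$ versus $\lambda^{p}$ over the same set $S_1\setminus\mathcal Z$ against the same measure $dv$, and $\frac{q}{q-1}=p$ makes them equal.

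For item~3 I would abbreviate the two common constants as $C:=C_{ab}(p,Q)=K_{ab}(q,Q)$ and $S:=C_{S_1}(p)=K_{S_1}(q)$, which are legitimate by items~1 and~2. Then $M_p(\Gamma)=S\,C^{1-p}$ and $M_q(\Sigma)=C\,S^{1-q}$, so
\[
M_p^{1/p}(\Gamma)\,M_q^{1/q}(\Sigma)=S^{1/p}C^{(1-p)/p}\cdot C^{1/q}S^{(1-q)/q}=S^{\frac1p+\frac{1-q}{q}}\,C^{\frac{1-p}{p}+\frac1q}.
\]
The exponent of $S$ is $\frac1p+\frac1q-1=0$ and the exponent of $C$ is $\frac1q+\frac1p-1=0$, so the product equals $1$.

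For item~4 I would compare the two explicit extremal functions directly. The step needing care is checking that the parameter $\tau$ agrees in both formulas: in~\eqref{eq:extr_curve} it is defined through $\tau+1=\frac{p-Q}{p-1}$, i.e. $\tau=\frac{1-Q}{p-1}$, while in~\eqref{eq:extr_func_surf} it is $\tau=(q-1)(1-Q)=\frac{1-Q}{p-1}$, so they coincide and both functions are built from the same $\|\nabla_0(N_G^{\tau+1})\|_0$. Using $\frac{1}{q-1}=p-1$ and $\frac{1}{1-q}=1-p$ together with $K_{S_1}(q)=C_{S_1}(p)$, formula~\eqref{eq:extr_func_surf} becomes $\rho_0=(\tau+1)^{1-p}C_{S_1}^{-1}(p)\,\|\nabla_0(N_G^{\tau+1})\|_0^{\,p-1}$; on the other hand raising~\eqref{eq:extr_curve} to the power $p-1$ gives $\varrho_0^{p-1}=(\tau+1)^{1-p}C_{ab}^{1-p}(p,Q)\,\|\nabla_0(N_G^{\tau+1})\|_0^{\,p-1}$, and multiplying by $C_{ab}^{p-1}(p,Q)C_{S_1}^{-1}(p)$ reproduces $\rho_0$ exactly. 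The proof has no genuine obstacle, as every relation is a substitution dictated by conjugacy; the one place to stay vigilant is precisely this item~4, where one must confirm that the two occurrences of $\tau$ name the same number and that the fractional powers of $\|\nabla_0(N_G^{\tau+1})\|_0$ line up after applying $\frac{1}{q-1}=p-1$, since a reciprocal or sign slip there would silently break the identity.
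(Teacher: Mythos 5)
Your verification is correct and is exactly the argument the paper leaves implicit: the corollary is stated there as a direct observation from Theorems~\ref{ThmodulusCarnot} and~\ref{th:ModSerface}, and your substitutions $q-1=\frac{1}{p-1}$, $\frac{q}{q-1}=p$, $\frac{1}{1-q}=1-p$ are precisely what make the two sets of constants, moduli, and extremal functions match, including the check that both occurrences of $\tau$ equal $\frac{1-Q}{p-1}$. Nothing further is needed.
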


%%%%%%%%%%%%%%%%%%%

\subsubsection{Relations between $M_p(\Gamma)$, $M_q(E)$, and the capacity $\capac_p(\R_{ab})$.}

Before we proceed to show that the function $\rho_0$ is admissible for the family $E$, we review the relations between $M_p(\Gamma)$, $M_q(E)$, and the capacity $\capac_p(\R_{ab})$.  

\begin{definition}\label{def:p_capac}
Let $\Omega$ be a domain in $G$, and let $D_0,D_1$ be two disjoint compacts in the closure $\overline \Omega$ of  $\Omega$. A function $u\in W^{1,p}(\Omega)$, such that $u\vert_{D_0}=0$ and $u\vert_{D_1}=1$, is called admissible for the condenser $(\Omega; D_0,D_1)$. The value 
$$
\capac_p(\Omega;D_0,D_1)=\inf\int_{\Omega}\|\nabla_0 u\|_0^p\,dx,
$$ 
is called a $p$-capacity of the condenser $(\Omega; D_0,D_1)$, where the infimum is taken over all admissible functions $u$.
\end{definition}

L\"owner introduced $3$-capacity in $\mathbb R^3$ in~\cite{Loew}, and showed that $\capac_3(\Omega)>0$.  Gehring~\cite{Geh62} proved that the L\"owner $3$-capacity (or conformal capacity) for a ring domain in $\mathbb R^3$, coincides with the module $M_{3}(\Gamma)$  of a family of curves, which was calculated  by V\"ais\"al\"a earlier in~\cite{va61}, and that it is also equal to the module $M_{3/2}(E)^{-2}$ of the family of surface measures on compact piecewise smooth surfaces $\Sigma$ separating $D_0$ and $D_1$ in a bounded domain $\Omega\subset\mathbb R^3$. The latter notion was used by \v{S}abat~\cite{Shabat} in his study of quasiconformal maps in $\mathbb R^3$. The restriction to smooth surfaces was relaxed in~\cite{Kriv} provided that admissible functions behave sufficiently nice. Later in 1966-68, Zimmer showed that the module $M_n(\Gamma)$ of a family of curves connecting $D_0$ and $D_1$ in a bounded domain $\Omega\subset\mathbb R^n$ is in the following relation with  the module $M_{\frac{n}{n-1}}(E)$ of the family of measures associated with the sets separating $D_0$ and $D_1$~\cite{zim66}:
\begin{equation}\label{eq:mm}
\Big(M_n(\Gamma)\Big)^{\frac{1}{n}}\Big(M_{\frac{n}{n-1}}(E)\Big)^{\frac{n-1}{n}}=1.
\end{equation}
In order to relax the conditions on admissible functions, the method of symmetrisaition in~\cite{Geh61} and surface-theoretical approximation theorems permit to consider general separating sets, see~\cite{Fed}. Shlyk showed in~\cite{Shlyk90}, that for a rather general condenser $(\Omega;D_0,D_1)$ in $\mathbb R^n$, the equality~\eqref{eq:mm} can be extended as follows
\begin{equation}\label{eq:mmp}
\Big(M_p(\Gamma)\Big)^{\frac{1}{p}}\Big(M_{q}(E)\Big)^{\frac{1}{q}}=1,\quad\frac{1}{p}+\frac{1}{q}=1.
\end{equation}
For further interesting generalizations for modules in $\mathbb R^n$ see~\cite{AikOht}. Some extensions to the Carnot groups can be found in~\cite{mar2003,mar2004,mar2005}, and for arbitrary metric measure spaces for instance in~\cite{Shan}.

Zimmer proved \cite{zim68, zim69}, that the capacity $\capac_p(\Omega;D_0,D_1)$ coincides with the module $M_p(\Gamma(\Omega;D_0,D_1))$ of a family of curves connecting $D_0$ and $D_1$ in $\Omega$, where the domain $\Omega\subset\mathbb R^n$ is assumed to be bounded. Hesse~\cite{Hesse} extended his result to unbounded domains. In particular, he showed that the set of admissible functions for the $p$-module of a family of curves can be restricted from non-negative Borel measurable functions in $\mathbb R^n$ to lower semicontinuous $L_p$-functions  in $\mathbb R^n$, which are continuous in $\Omega$, provided that $D_0\cup D_1\subset\Omega$. Shlyk~\cite{Shlyk93}  generalized the result of Hesse from a connected open set (domain) $\Omega$ to an arbitrary open set in $\overline{\mathbb R}^n$.

In general, the relation between the admissible function $u$ for the $p$-capacity of a condenser $(\Omega; D_0, D_1)$, and the admissible function $\rho$ for the $p$-module of a family of curves connecting $D_0$ and $D_1$ is as follows. Let $\rho$ be an admissible function for the family of curves connecting $D_0$ and $D_1$. Then the function $u(x)=\min\{1,\inf\int_{\beta_x}\rho\}$ is admissible for the $p$-capacity of the condenser $(\Omega; D_0, D_1)$, where the infimum is taken over all locally rectifiable curves $\beta_x$ in $\Omega$ connecting $D_0$ and the point $x\in \Omega$.  Moreover, 
$$
|\nabla u|\leq \rho\quad\text{almost everywhere in $\Omega$}.
$$
This immediately implies the inequality
$$
\capac_p(\Omega; D_0, D_1)\leq\int_{\mathbb R^n}|\nabla u|^p\,dx\leq \int_{\mathbb R^n}\rho^p\,dx\leq M_p(\Gamma),
$$
by taking infimum over all admissible functions $\rho$ for the $p$-module. On the other hand, if $u$ is an admissible $W^{1,p}$-function for the $p$-capacity of $(\Omega; D_0, D_1)$, then 
$$
\rho(x)=
\begin{cases}
|\nabla u(x)|,\quad & x\in\Omega,
\\
0, & x\in\mathbb R^n\setminus\Omega
\end{cases}
$$
is an admissible function for the module $M_p(\Gamma)$ of the family of curves connecting $D_0$ and $D_1$, that implies the inequality
$$
M_p(\Gamma)\leq\int_{\mathbb R^n}\rho^p\,dx=\int_{\mathbb R^n}|\nabla u|^p\,dx\leq \capac_p(\Omega; D_0, D_1)
$$
upon taking infimum over all admissible functions $u$ for the $p$-capacity.

Let us also mention a relation between the extremal functions $\varrho_0$, $\rho_0$ for the modules $M_p(\Gamma)$ and $M_q(E)$, and the extremal function $u$ for the $p$-capacity of the condenser $(R_{ab};S_a,S_b)$. It is well known that the variational equation for the problem of finding the $p$-capacity on a polarizable Carnot group $G$ (and particularly in $\mathbb R^n$) is the $p$-sub-Laplacian equation, and the extremal function $u$ for the $p$-capacity is a solution to the $p$-sub-Laplace equation in $G$ with  prescribed boundary values on $D_0$ and $D_1$. It was shown~\cite{balogh2001}, that the function 
$$
\tilde u(g)=
\begin{cases}
c_pN_G^{\tau+1},\quad \tau+1=\frac{p-Q}{p-1},& \quad\text{for}\quad p\neq Q,
\\
c_Q\log N_G, &\quad\text{for}\quad p= Q,
\end{cases}
$$
is a fundamental solution to the $p$-sub-Laplacian equation on $G$ for some appropriate choice of constants, see~\cite{capogna1996} for an analogous result on $H$-type groups.  One can easily check that 
$$
u(g)=
\frac{N_G^{\tau+1}(g)-a^{\tau+1}}{b^{\tau+1}-a^{\tau+1}},\quad g\in R_{ab} 
$$
is extremal for the $p$-capacity of $(R_{ab};S_a,S_b)$. 

%%%%%%%%%%%%%%%

\subsection{Admissibility of $\rho_0$ for $M_q(E)$}\label{subsec:adm_mod_surf}

%%%%%%%%%%%%%%%

In this section we will show that the function $\rho_0$ defined in~\eqref{eq:extr_func_surf} is admissible for a system $E$ of Hausdorff measures $H^{Q-1}$ associated with a family $\Sigma$ of countably $H^{Q-1}$-rectifiable sets separating $S_a$ and $S_b$ in $R_{ab}$. The core idea of the proof is to show that if $u$ is an extremal function for the $p$-capacity of $(R_{ab};S_a,S_b)$, then $\|\nabla_0u\|_0^{p-1}$ is an admissible function for $M_q(E)$ with $\frac{1}{p}+\frac{1}{q}=1$. This method 
goes back to Gehring~\cite{Geh62} who proved a similar result for $\mathbb R^3$, which was extended for the $n$-capacities and $n$-modules by  Ziemer~\cite{zim66}  in $\mathbb R^n$. Later, Shlyk~\cite{Shlyk90} generalized the proof to $\mathbb R^n$ for arbitrary values of $p\neq n$. The same result was implicitly presented in~\cite{AikOht} for $\mathbb R^n$, and in~\cite{markina2003} for arbitrary Carnot groups. Here we want to follow the ideas of Gehring~\cite{Geh62}. We emphasize that in $\mathbb R^n$ the result was obtained for arbitrary system of separating sets. The lack of approximations theorems, such as, for instance~\cite[Theorem 2.4.2]{zim66}, does not allow us to extend the proof of Theorem~\ref{th:adm} to arbitrary separating sets on polarizable groups. Our main goal is to show the following theorem.

\begin{theorem}\label{th:adm}
Let $\Sigma$ be a family of countably $H^{Q-1}$-rectifiable sets separating $S_a$ and $S_b$ in $R_{ab}$, $\sigma\in\Sigma$, and let $u$ be an extremal function for the $p$-capacity of $(R_{ab},S_a,S_b)$. Let $E$ be a family of $(Q-1)$-Hausdorff measures $H^{Q-1}$ associated with $\Sigma$. Then the integral
$\int_{\sigma}\|\nabla_0u(g)\|_0^{p-1}\,dH^{Q-1}(g)$ exists for $M_q(E)$-almost all measures from $E$ and 
\begin{equation}\label{eq:adm}
\int_{\sigma}\|\nabla_0u(g)\|_0^{p-1}\,dH^{Q-1}(g)\geq \capac_p(R_{ab};S_a,S_b),\quad \frac{1}{p}+\frac{1}{q}=1.
\end{equation}
\end{theorem}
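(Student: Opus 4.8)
The plan is to interpret the integrand $\|\nabla_0u\|_0^{p-1}$ as the normal flux of the horizontal field attached to the extremal potential $u$, and to use that this flux is conserved across separating sets because that field is horizontally divergence-free. Recall from the previous subsection that the extremal function for $\capac_p(R_{ab};S_a,S_b)$ is $u=(N_G^{\tau+1}-a^{\tau+1})/(b^{\tau+1}-a^{\tau+1})$ (with the logarithmic analogue when $p=Q$), so that $u$ solves $\Delta_{0,p}u=0$ in $R_{ab}$, is smooth off the characteristic set $\mathcal Z$, and satisfies $u|_{S_a}=0$, $u|_{S_b}=1$. I set $F:=\|\nabla_0u\|_0^{p-2}\nabla_0u$, a continuous horizontal vector field with $\mathrm{div}_0F=\Delta_{0,p}u=0$ in $R_{ab}$ and $\|F\|_0=\|\nabla_0u\|_0^{p-1}$ pointwise.

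For the existence claim, note that $\capac_p(R_{ab};S_a,S_b)=\int_{R_{ab}}\|\nabla_0u\|_0^p\,d\mathbf g<\infty$ forces $\|\nabla_0u\|_0\in L^p(R_{ab})$, hence $\|\nabla_0u\|_0^{p-1}\in L^{p/(p-1)}(R_{ab})=L^q(R_{ab})$. Proposition~\ref{modulusprop}, item~(4), then guarantees that $\|\nabla_0u\|_0^{p-1}$ is $\bar\mu$-integrable for $M_q$-almost all $\mu\in E$, which is the first assertion of the theorem.

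The heart of the argument is a flux identity. Integrating by parts (the horizontal Green formula), using $\mathrm{div}_0F=0$ and the boundary values of $u$, gives
\[
\capac_p(R_{ab};S_a,S_b)=\int_{R_{ab}}\langle F,\nabla_0u\rangle_0\,d\mathbf g=\int_{S_b}\langle F,\nu_0\rangle_0\,dH^{Q-1},
\]
where $\nu_0$ is the outward horizontal unit normal; performing the same computation on an intermediate sphere $S_s$ recovers $\int_{S_s}\|F\|_0\,dS_s=\capac_p$ (there $F$ is parallel to $\nu_0$), in agreement with $\int_{S_s}\rho_0\,dS_s=1$. Now let $\sigma\in\Sigma$ split $R_{ab}$ into $U_1\supset S_a$ and $U_2\supset S_b$. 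Applying the sub-Riemannian Gauss--Green theorem to the divergence-free field $F$ on $U_1$, whose boundary is $S_a\cup(\sigma\cap R_{ab})$, and cancelling the $S_a$-flux against the identity above, I obtain that the flux of $F$ through $\sigma$ again equals $\capac_p(R_{ab};S_a,S_b)$.

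It remains to combine this with the pointwise Cauchy--Schwarz bound $\langle F,\nu_0\rangle_0\le\|F\|_0\,\|\nu_0\|_0=\|\nabla_0u\|_0^{p-1}$ on $\sigma$ and integrate:
\[
\capac_p(R_{ab};S_a,S_b)=\int_{\sigma}\langle F,\nu_0\rangle_0\,dH^{Q-1}\le\int_{\sigma}\|\nabla_0u\|_0^{p-1}\,dH^{Q-1},
\]
which is precisely \eqref{eq:adm}. I expect the main obstacle to be the rigorous justification of the Gauss--Green step on a merely countably $H^{Q-1}$-rectifiable set $\sigma$: one must give meaning to the horizontal unit normal $\nu_0$ and to the flux integral when $\sigma$ is not smooth, and one must identify the horizontal-perimeter measure of $\partial U_1$ with the Hausdorff measure $H^{Q-1}$ that defines the system $E$. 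This is exactly where rectifiability of $\sigma$ is indispensable, and where the absence on a general polarizable group of the surface approximation theorems available in $\mathbb R^n$ prevents extending the statement to arbitrary separating sets, as already flagged before the theorem.
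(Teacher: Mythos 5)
Your overall strategy---flux conservation for the horizontal field $F=\|\nabla_0u\|_0^{p-2}\nabla_0u$ plus a pointwise Cauchy--Schwarz bound---is the classical idea behind Gehring's Euclidean argument, and your treatment of the existence claim via Proposition~\ref{modulusprop}, item~(4), is correct. But the proof has a genuine gap, and it is exactly the step you yourself flag as ``the main obstacle'': the sub-Riemannian Gauss--Green formula on a merely countably $H^{Q-1}$-rectifiable separating set $\sigma$. On a polarizable Carnot group there is no available theorem that (i) assigns an $H^{Q-1}$-a.e.\ defined horizontal unit normal $\nu_0$ to such a $\sigma$, (ii) guarantees that $U_1$ is a set of finite horizontal perimeter whose reduced boundary lies in $\sigma$, and (iii) identifies the horizontal perimeter measure with $H^{Q-1}$ restricted to $\sigma$. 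Point (iii) is particularly delicate: even for intrinsically rectifiable boundaries in Carnot groups, the perimeter measure agrees with the spherical Hausdorff measure only up to a direction-dependent metric factor, so even granting a Gauss--Green theorem your identity $\capac_p(R_{ab};S_a,S_b)=\int_\sigma\langle F,\nu_0\rangle_0\,dH^{Q-1}$ would in general acquire nontrivial constants. A further problem is that $F$ is divergence-free only in the weak sense, since $u$ is a weak solution of $\Delta_{0,p}u=0$ and $F$ need not be $C^1$; so even on smooth pieces (including the spheres $S_s$, which moreover have characteristic points) one cannot invoke the classical divergence theorem but must work with the weak formulation. Since the entire content of the theorem is to obtain \eqref{eq:adm} for rough $\sigma$ without such machinery---this is precisely why the paper restricts to rectifiable separating sets in the first place---deferring these points leaves the proof incomplete.

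The paper circumvents Gauss--Green altogether, following Gehring. First (Lemma~\ref{lem:1}) it tests the weak Euler--Lagrange equation $\int_{R_{ab}}\|\nabla_0u\|_0^{p-2}\langle\nabla_0u,\nabla_0w\rangle_0\,d\mathbf g=0$ with the Lipschitz function $w=v-\beta u$, where $v=\min\{\beta,\dist(\cdot,F_0^c)\}$; this yields $\int_{\sigma(\beta)}\|\nabla_0u\|_0^{p-1}\|\nabla_0d\|_0\,d\mathbf g\geq 2\beta\,\capac_p(R_{ab};S_a,S_b)$ over the $\beta$-neighbourhood $\sigma(\beta)$ of $\sigma$, replacing your flux identity and requiring no normal vector on $\sigma$. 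Second (Lemma~\ref{lem:2}) it substitutes for $\|\nabla_0u\|_0^{p-1}$ its continuous integral means $f_r$, transfers the previous estimate to $f_r$ by left translations and Fubini, and then applies the co-area formula for the Lipschitz distance function $d$ to pass from the neighbourhood estimate to $\int_\sigma f_r\,dH^{Q-1}\geq\capac_p(R_{ab};S_a,S_b)$ as $\beta\to0$; here continuity of $f_r$ together with countable rectifiability of $\sigma$ is what justifies the limit of the level-set integrals. Finally, $f_r\to\|\nabla_0u\|_0^{p-1}$ in $L^q(R_{ab})$, and Fuglede's theorem (Proposition~\ref{modulusprop}, item~(5)) extracts a subsequence converging in $L^1(\mu)$ for $M_q$-almost every $\mu\in E$, which produces both the existence statement and the inequality \eqref{eq:adm} in the limit. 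To salvage your route you would need to prove a Gauss--Green theorem of the required generality on polarizable groups; the mollification-plus-co-area argument is the standard device for avoiding exactly that.
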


The proof is forestalled by two lemmas. Before we formulate the statement of the first lemma let us describe some constructions which we will use. Let $\sigma\in \Sigma$, and let $\beta>0$ be such that $\beta<\dist(\sigma,\partial R_{ab})$. We denote by $\sigma(\beta)=\{g\in R_{ab}\mid\ \dist(\g,\sigma)<\beta\}$ and by $d(g)=\dist(g,\sigma)$. Here the distance is understood as  $\dist(g,\eta)=N_G(\eta^{-1}g)$, $g,\eta\in G$. By construction, the norm $N_G=u_2^{\frac{1}{2-Q}}$ is smooth away from the identity of $G$ due to the smoothness of the solution $u_2$. This guarantees that the function $d$ is at least Lipschitz in~$G$.

\begin{lemma}\label{lem:1}
Let $u$ be an extremal function for the $p$-capacity of the condenser $(R_{ab}; S_a,S_b)$, and let $\sigma\in\Sigma$. Then
$$
\int_{\sigma(\beta)}\|\nabla_0u(g)\|_0^{p-1}\|\nabla_0d(g)\|_0\,d{\mathbf g}\geq 2\beta\capac_p(R_{ab}; S_a,S_b).
$$
\end{lemma}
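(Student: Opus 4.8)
The plan is to pass from the volume integral over the shell $\sigma(\beta)$ to a one-parameter family of surface integrals over the level sets of the distance function $d$ by the coarea formula, to split each such level set into two disjoint \emph{separating sheets}, and to estimate the surface integral over each sheet from below by the capacity, exploiting that the extremal $u$ is $p$-harmonic. First I would record that $\sigma(\beta)\subset R_{ab}$ because $\beta<\dist(\sigma,\partial R_{ab})$, and that $d$ is Lipschitz on $G$ (since $N_G$ is smooth away from the identity, as already noted) with $0\le d<\beta$ throughout $\sigma(\beta)$. Applying the coarea formula on $G$ to $d$, with integrand $\|\nabla_0u\|_0^{p-1}$, gives
\[
\int_{\sigma(\beta)}\|\nabla_0u\|_0^{p-1}\,\|\nabla_0d\|_0\,d\mathbf g
=\int_0^\beta\Big(\int_{\sigma_\tau}\|\nabla_0u\|_0^{p-1}\,dH^{Q-1}\Big)\,d\tau,\qquad \sigma_\tau:=\{g\in R_{ab}:d(g)=\tau\}.
\]
Thus it suffices to prove that $\int_{\sigma_\tau}\|\nabla_0u\|_0^{p-1}\,dH^{Q-1}\ge 2\capac_p(R_{ab};S_a,S_b)$ for almost every $\tau\in(0,\beta)$.

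For the doubling I would use that $\sigma$ separates: $\overline{R_{ab}}\setminus\sigma=U_1\cup U_2$ with $U_1,U_2$ disjoint and open, $S_a\subset U_1$, $S_b\subset U_2$. For $0<\tau<\beta$, since $d=0$ on $\sigma$, the level set splits as a disjoint union $\sigma_\tau=\sigma_\tau^1\sqcup\sigma_\tau^2$ with $\sigma_\tau^i=\sigma_\tau\cap U_i$. Each sheet separates $S_a$ from $S_b$: for $\sigma_\tau^1$ one checks that $V_1=U_1\cap\{d>\tau\}$ and $V_2=\{d<\tau\}\cup U_2$ are disjoint and open, cover $\overline{R_{ab}}\setminus\sigma_\tau^1$, and contain $S_a$ and $S_b$ respectively; here $S_a\subset V_1$ because $\dist(S_a,\sigma)\ge\dist(\sigma,\partial R_{ab})>\beta>\tau$. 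The same argument applies to $\sigma_\tau^2$. Consequently $\int_{\sigma_\tau}\|\nabla_0u\|_0^{p-1}\,dH^{Q-1}=\int_{\sigma_\tau^1}+\int_{\sigma_\tau^2}$, and the constant $2$ will come precisely from these two sheets.

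For the bound on a single sheet I would invoke that the extremal $u$ for the $p$-capacity solves the $p$-sub-Laplace equation, so the horizontal field $W=\|\nabla_0u\|_0^{p-2}\nabla_0u$ is horizontally divergence-free, $\operatorname{div}W=0$. For almost every $\tau$ the level set $\sigma_\tau$ is $H^{Q-1}$-rectifiable (a Sard/coarea consequence for the Lipschitz function $d$), so the sub-Riemannian divergence theorem applies to the region enclosed between $S_b$ and a sheet $\sigma_\tau^i$. Since $\operatorname{div}W=0$ and $u=0$ on $S_a$, $u=1$ on $S_b$, integration by parts shows that the flux of $W$ through any separating sheet equals $\int_{R_{ab}}\langle W,\nabla_0u\rangle_0\,d\mathbf g=\int_{R_{ab}}\|\nabla_0u\|_0^p\,d\mathbf g=\capac_p(R_{ab};S_a,S_b)$. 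Using $|\langle W,\nu\rangle_0|\le\|W\|_0=\|\nabla_0u\|_0^{p-1}$, this gives $\int_{\sigma_\tau^i}\|\nabla_0u\|_0^{p-1}\,dH^{Q-1}\ge\capac_p$; summing over $i=1,2$ and integrating in $\tau$ over $(0,\beta)$ yields
\[
\int_{\sigma(\beta)}\|\nabla_0u\|_0^{p-1}\,\|\nabla_0d\|_0\,d\mathbf g\ge\int_0^\beta 2\capac_p(R_{ab};S_a,S_b)\,d\tau=2\beta\,\capac_p(R_{ab};S_a,S_b).
\]

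I expect the main obstacle to be the rigorous justification of the flux identity in the sub-Riemannian setting: one must guarantee that for almost every $\tau$ the sheets $\sigma_\tau^i$ are regular enough (finite horizontal perimeter, $H^{Q-1}$-rectifiable) for the divergence theorem to apply, control the characteristic set $\mathcal Z=\{\nabla_0N_G=0\}$ where the construction degenerates (it has measure zero and carries no flux), and reconcile the horizontal perimeter measure with the Hausdorff measure $H^{Q-1}$ used throughout. Once the coarea formula with the factor $\|\nabla_0d\|_0$ and the constancy of flux of the $p$-harmonic field across separating surfaces are secured, the counting of the two sheets makes the constant $2$ appear automatically.
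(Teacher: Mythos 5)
Your strategy---coarea slicing of $\sigma(\beta)$ along the level sets of $d$, splitting each level set into two separating sheets, and estimating the surface integral over each sheet by the flux of the field $W=\|\nabla_0u\|_0^{p-2}\nabla_0u$---is genuinely different from the paper's argument, and its first two steps (the coarea formula \eqref{eq:co_area} applied to the Lipschitz function $d$ with integrand $\|\nabla_0u\|_0^{p-1}\in L^1(R_{ab})$, and the topological decomposition $\sigma_\tau=\sigma_\tau^1\sqcup\sigma_\tau^2$ with each sheet separating $S_a$ from $S_b$) are sound. The genuine gap is the flux identity. You invoke a sub-Riemannian divergence theorem for the region enclosed between $S_b$ and a sheet $\sigma_\tau^i$, for a field $W$ that is divergence-free only in the \emph{weak} sense, over a sheet that is merely a portion of a level set of a Lipschitz function. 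To make this rigorous one needs (i) that for a.e.\ $\tau$ the sheet has finite horizontal perimeter and that its perimeter measure can be identified, with the correct density, with $H^{Q-1}$ restricted to the sheet, and (ii) a Gauss--Green formula valid for $L^q$ weakly divergence-free horizontal fields over such rough regions, including control near the characteristic set. This machinery is available in step-two groups but not in general polarizable Carnot groups, and the paper flags exactly this obstruction when it explains that the lack of approximation theorems prevents extending Theorem~\ref{th:adm} to arbitrary separating sets. Worse, your intermediate claim $\int_{\sigma_\tau^i}\|\nabla_0u\|_0^{p-1}\,dH^{Q-1}\geq \capac_p(R_{ab};S_a,S_b)$ for a separating sheet \emph{is} the inequality of Theorem~\ref{th:adm} specialized to that sheet; Lemma~\ref{lem:1} and Lemma~\ref{lem:2} exist precisely because this inequality cannot yet be asserted for any surface, so as written your argument is close to circular.

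For contrast, the paper's proof of Lemma~\ref{lem:1} never touches a surface integral. It splits $\sigma(\beta)$ into the two collars $E_0,E_1$ lying on either side of $\sigma$ inside the components $F_0\supset S_a$ and $F_1\supset S_b$ of $\sigma^c\cap\overline{R}_{ab}$, builds for each side the Lipschitz truncated distance $v=\min\{\beta,\dist(\cdot,F_k^c)\}$, and uses $w=v-\beta u$ as a test function in the weak form of the $p$-sub-Laplace equation satisfied by the extremal $u$; the Cauchy--Schwarz inequality then gives
\begin{equation*}
\int_{E_k}\|\nabla_0u\|_0^{p-1}\|\nabla_0d\|_0\,d\mathbf g\;\geq\;\beta\,\capac_p(R_{ab};S_a,S_b),\qquad k=0,1,
\end{equation*}
and summing over the two sides produces the factor $2\beta$. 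So both proofs extract the constant $2$ from two-sidedness, but the paper trades your flux-through-a-sheet step for a purely variational test-function identity, deferring all surface-level statements to Lemma~\ref{lem:2}, where the coarea formula is applied only to the \emph{continuous} mollifications $f_r$. To rescue your route you would have to prove the Gauss--Green statement for a.e.\ level sheet in a general polarizable Carnot group, which is a substantially harder problem than the lemma itself; the test-function trick is the way around it.
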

\begin{proof} Denote by $\overline R_{ab}$ the closure of $R_{ab}$, and by $A^c$ the complement to $A$ in $G$.
Let $F_0$ be a component of $\sigma^c\cap \overline R_{ab}$  containing $S_a$, and  let $F_1$ be a component of $\sigma^c\cap \overline R_{ab}$ containing $S_b$. Let
$$
E_k=\{g\in \overline R_{ab}\mid\ 0<\dist(g,F^c_k)<\beta\},\quad k=0,1. 
$$
Then $E_k\subset F_k$, $E_0\cup E_1\subset \sigma(\beta)$, and it is sufficient to show that 
$$
\int_{E_k}\|\nabla_0u(g)\|_0^{p-1}\|\nabla_0d(g)\|_0\,d{\mathbf g}\geq \beta\capac_p(R_{ab};S_a,S_b),\quad k=0,1.
$$

We  focus ourselves only on the case $k=0$. The case $k=1$ is treated analogously. Define
$$
v(g)=\min\{\beta,\dist(g,F^c_0)\}=
\begin{cases}
0,\qquad &\quad\text{if}\quad g\in F_0^c=F_1\cup\sigma
\\
\inf_{\eta\in F^c_0}\dist(g,\eta)&\quad\text{if}\quad  g\in E_0
\\
\beta &\quad\text{if}\quad g\in F_0\setminus E_0.
\end{cases}
$$
The function $v$ is  Lipschitz, from the class $L^p(R_{ab})$, and 
$$
\|\nabla_0 v(g)\|_0=
\begin{cases}
\|\nabla_0 d(g)\|_0>0 &\quad\text{almost everywhere in}\quad E_0,
\\
0&\quad\text{ in}\quad  R_{ab}\setminus E_0.
\end{cases}
$$
Thus, the function $w=v-\beta u$ is almost everywhere differentiable and belongs to the class $L^p(R_{ab})$. We use $w$ as a test function on $R_{ab}$ and obtain
$$
0=\int_{R_{ab}}\|\nabla_0u(g)\|_0^{p-2}\langle\nabla_0u,\nabla_0w\rangle_{0}d\mathbf g=\int_{R_{ab}}\|\nabla_0u(g)\|_0^{p-2}\langle\nabla_0u,\nabla_0v-\beta\nabla_0u\rangle_{0}d\mathbf g.
$$
This, together with the Cauchy-Schwartz inequality, implies
\begin{eqnarray*}
\int_{E_0}\|\nabla_0u(g)\|_0^{p-1}\|\nabla_0d(g)\|_0\,d{\mathbf g} & = &  \int_{R_{ab}}\|\nabla_0u(g)\|_0^{p-1}\|\nabla_0v(g)\|_0\,d{\mathbf g}
\\ &\geq & \int_{R_{ab}}\|\nabla_0u(g)\|_0^{p-2}\langle \nabla_0u(g),\nabla_0v(g)\rangle_0\,d{\mathbf g}
\\
& = & \beta \int_{R_{ab}}\|\nabla_0u(g)\|_0^{p}\,d{\mathbf g}
= \beta\capac_p(R_{ab};S_a,S_b).
\end{eqnarray*}
\end{proof}

If we were pass to the limit in $$\frac{1}{2\beta}\int_{\sigma(\beta)}\|\nabla_0u(g)\|_0^{p-1}\|\nabla_0d(g)\|\,d{\mathbf g}\geq\capac_p(R_{ab};S_a,S_b)$$ as $\beta\to 0$, we could finish the proof of Theorem~\ref{th:adm} at once. In order to show that the limit exists, we consider the sequences of continuous functions $f_r(g)$ converging to $\|\nabla_0u(g)\|_0^{p-1}$ $\mathbf g$-almost everywhere as $r\to 0$ and such that the limit $$\frac{1}{2\beta}\int_{\sigma(\beta)}f_r(g)\|\nabla_0d(g)\|\,d{\mathbf g}\to\int_{\sigma}f_r(g)\,d{H}^{Q-1}\quad\text{as}\quad \beta\to 0$$
exists.
We define the integral mean of $\|\nabla_0u(g)\|_0^{p-1}$ in the ball by
\begin{equation}\label{eq:average}
f_r(g)=\frac{1}{\mathbf g(B(g,r))}\int_{B(g,r)}\|\nabla_0u(\eta)\|_0^{p-1}\,d\mathbf g(\eta).
\end{equation}

We also recall the co-area formula for Carnot groups. Let $U$ be a domain in $G$. Let $f\in L^1(U)$ be a non-negative function, and let $v$ be a real valued Lipschitz function in $U$, see~\cite{Heinonen95,KarmVodopis,Magnani2005}. Then  
\begin{equation}\label{eq:co_area}
\int_{U}f(g)\|\nabla_0v(g)\|_0\,d\mathbf g(g)=\int_{-\infty}^{+\infty}\int_{v^{-1}(s)}f(\eta)dH^{Q-1}(\eta)ds.
\end{equation}
 
\begin{lemma}\label{lem:2}
The integral mean~\eqref{eq:average} satisfies the inequality
$$
\int_{\sigma}f_r(g)\,d{H}^{Q-1}\geq\capac_p(R_{ab};S_a,S_b),
$$
whenever $r<\dist(\sigma,\partial R_{ab})$.
\end{lemma}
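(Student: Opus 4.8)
The plan is to convert the volume integral of $f_r$ over the thin tube $\sigma(\beta)$ into an average of surface integrals over the level sets of the distance function $d(g)=\dist(g,\sigma)$, to identify the limit as $\beta\to0$ using the continuity of $f_r$, and to supply the lower bound $\capac_p$ from Lemma~\ref{lem:1}. First I would observe that $f_r$ is continuous: since $u\in W^{1,p}(R_{ab})$ we have $\|\nabla_0u\|_0^{p-1}\in L^{q}\subset L^1_{\mathrm{loc}}$, and because $\mathbf g(B(g,r))=\mathbf g(B(0,1))r^{Q}$ is independent of $g$ (left invariance and homogeneity of the Haar measure), $f_r$ in \eqref{eq:average} is the mean of a fixed locally integrable function over balls of the fixed radius $r$, hence depends continuously on $g$. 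The hypothesis $r<\dist(\sigma,\partial R_{ab})$ guarantees that the averaging balls centered near $\sigma$ stay inside $R_{ab}$, so $u$ and $\nabla_0u$ are available. Applying the co-area formula \eqref{eq:co_area} with the Lipschitz function $v=d$ and integrand $\chi_{\sigma(\beta)}f_r$ gives
\[
\int_{\sigma(\beta)}f_r\,\|\nabla_0 d\|_0\,d\mathbf g=\int_0^{\beta}\Big(\int_{d^{-1}(s)}f_r\,dH^{Q-1}\Big)ds,
\]
since $d^{-1}(s)\subset\sigma(\beta)$ for $0<s<\beta$.

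For $s>0$ the level set $d^{-1}(s)$ decomposes into the two parallel sheets contained in the components $F_0$ and $F_1$ of $\sigma^{c}\cap\overline R_{ab}$ from the proof of Lemma~\ref{lem:1}; as $s\to0^{+}$ each sheet collapses onto $\sigma$ and its surface measure converges to that of $\sigma$, so the continuity of $f_r$ yields $\int_{d^{-1}(s)}f_r\,dH^{Q-1}\to 2\int_{\sigma}f_r\,dH^{Q-1}$. Feeding this into the co-area identity and invoking the Lebesgue differentiation theorem for the inner integral, I would conclude
\[
\lim_{\beta\to0}\frac{1}{2\beta}\int_{\sigma(\beta)}f_r\,\|\nabla_0 d\|_0\,d\mathbf g=\int_{\sigma}f_r\,dH^{Q-1}.
\]
This is the step that legitimizes passing to the limit which fails for the possibly discontinuous integrand $\|\nabla_0u\|_0^{p-1}$, and it is the sole reason for introducing the mollified $f_r$.

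It then remains to bound the left-hand side below by $\capac_p(R_{ab};S_a,S_b)$, and this is where Lemma~\ref{lem:1} enters. The most transparent route is to rewrite $\int_\sigma f_r\,dH^{Q-1}$ by Fubini: substituting the mean defining $f_r$, writing $B(g,r)=gB(0,r)$, and using the symmetry $g\in B(\eta,r)\Leftrightarrow\eta\in B(g,r)$ of the homogeneous norm, one obtains
\[
\int_{\sigma}f_r\,dH^{Q-1}=\frac{1}{\mathbf g(B(0,r))}\int_{\sigma(r)}\|\nabla_0u(\eta)\|_0^{p-1}\,H^{Q-1}\big(\sigma\cap B(\eta,r)\big)\,d\mathbf g(\eta).
\]
Comparing this with the bound $\int_{\sigma(r)}\|\nabla_0u\|_0^{p-1}\|\nabla_0 d\|_0\,d\mathbf g\ge 2r\,\capac_p$ of Lemma~\ref{lem:1} (taken with $\beta=r$), it suffices to establish the pointwise lower density estimate
\[
H^{Q-1}\big(\sigma\cap B(\eta,r)\big)\ge\frac{\mathbf g(B(0,r))}{2r}\,\|\nabla_0 d(\eta)\|_0\qquad\text{for $\mathbf g$-a.e. }\eta\in\sigma(r),
\]
after which $\int_\sigma f_r\,dH^{Q-1}\ge\tfrac{1}{2r}\cdot 2r\,\capac_p=\capac_p$. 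The main obstacle is precisely this density estimate: it asserts that a ball centered near the separating set $\sigma$ meets $\sigma$ in a $(Q-1)$-set of measure at least of order $r^{Q-1}$, and its proof rests on the topological separation property together with the countable $H^{Q-1}$-rectifiability of $\sigma$ — exactly the place where, in the absence of surface approximation theorems, the argument cannot be pushed to arbitrary separating sets. Equivalently, it amounts to controlling the density of the two collapsing sheets in the co-area identity of the second paragraph; once it is in force for each fixed $r<\dist(\sigma,\partial R_{ab})$, the proof is complete.
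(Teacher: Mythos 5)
Your first half coincides with the paper's proof: continuity of $f_r$, the co-area identity over the tube $\sigma(\beta)$, the two sheets of $d^{-1}(s)$ collapsing onto the countably rectifiable set $\sigma$, and the resulting limit $\lim_{\beta\to0}\frac{1}{2\beta}\int_{\sigma(\beta)}f_r\|\nabla_0 d\|_0\,d\mathbf g=\int_{\sigma}f_r\,dH^{Q-1}$. The gap is in your lower bound. You reduce the lemma to the pointwise density estimate
\[
H^{Q-1}\big(\sigma\cap B(\eta,r)\big)\ \geq\ \frac{\mathbf g(B(0,r))}{2r}\,\|\nabla_0 d(\eta)\|_0\qquad\text{for a.e. }\eta\in\sigma(r),
\]
which you do not prove, and which is in fact false. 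Test it on a flat piece of $\sigma$ in $\mathbb R^n$: for $\eta$ at distance $t$ from $\sigma$, the left-hand side is a dimensional constant times $(r^2-t^2)^{(n-1)/2}$ and tends to $0$ as $t\to r$, while the right-hand side stays a fixed positive multiple of $r^{n-1}$. A ball centered near the outer edge of the tube $\sigma(r)$ only clips $\sigma$, so no pointwise bound by the tube average can hold; the cross-sectional measure dominates its average only after integration in $t$, and once you reinstate the weight $\|\nabla_0u\|_0^{p-1}$, the integrated statement you would need is, by your own Fubini identity, literally the inequality of the lemma. So the reduction makes no progress.

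The idea you are missing is the paper's: do not compare at the single scale $\beta=r$, but prove the tube bound $\int_{\sigma(\beta)}f_r\|\nabla_0 d\|_0\,d\mathbf g\geq 2\beta\,\capac_p(R_{ab};S_a,S_b)$ for every $\beta$ with $\beta+r<\dist(\sigma,\partial R_{ab})$, and then finish with your limit identity. This tube bound requires no density estimate: insert the definition of $f_r$, use Fubini to bring the $\eta$-average outside, and for each fixed $\eta\in B(0,r)$ change variables $\psi=\eta g$. Since $d(g)=\dist(g,\sigma)=\dist(\eta g,\eta\sigma)$ and the Haar measure is left invariant, the inner integral becomes $\int_{(\eta\sigma)(\beta)}\|\nabla_0u(\psi)\|_0^{p-1}\|\nabla_0\dist(\psi,\eta\sigma)\|_0\,d\mathbf g(\psi)$, which is exactly the quantity that Lemma~\ref{lem:1} bounds below by $2\beta\,\capac_p(R_{ab};S_a,S_b)$, applied to the translated set $\eta\sigma$ (still a separating set at positive distance from $\partial R_{ab}$, because $\beta+r<\dist(\sigma,\partial R_{ab})$). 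Averaging over $\eta\in B(0,r)$ preserves the bound. Note also that countable rectifiability of $\sigma$ is used only in the limit step, which you already have; your suggestion that rectifiability is what should yield the density estimate misplaces where that hypothesis actually enters.
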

\begin{proof}
If $H^{Q-1}(\sigma)=\infty$, then there is nothing to prove. Assume that $H^{Q-1}(\sigma)<\infty$.
Let $\beta,r$ be positive numbers such that $\beta+r<\dist(\sigma,\partial R_{ab})$. Let $L_\eta(\sigma)=\eta\sigma$ be a left translation of the set $\sigma$ by an element $\eta\in G$. Then, changing variables and using  Fubini's theorem we come to 
\begin{eqnarray}\label{1part}
 &\ &\int_{\sigma(\beta)}f_r(g)\|\nabla_0d(g)\|_0\,d\mathbf g(g)
 \\
 &= & \frac{1}{\mathbf g(B(g,r))}\int_{B(0,r)}\,d\mathbf g(\eta)\int_{\sigma(\beta)}\|\nabla_0u(\eta g)\|_0^{p-1}\|\nabla_0d(g)\|_0 \,d\mathbf g(g).\nonumber
 \end{eqnarray}
Observe that $d(g)=\dist(g,\sigma)=\dist(\eta g,\eta\sigma)=\dist(\psi,\eta\sigma)$, with $\psi=\eta g$. Then making change of variables $\psi=\eta g$, we write the last integral in the form
\begin{eqnarray*}
& \ &\frac{1}{\mathbf g(B(g,r))}\int_{B(0,r)}\,d\mathbf g(\eta)\int_{\eta\sigma(\beta)}\|\nabla_0u(\psi)\|_0^{p-1}\|\nabla_0\dist(\psi,\eta\sigma)\|_0 \,d\mathbf g(\psi)
\\
& = &\frac{1}{\mathbf g(B(g,r))}\int_{B(0,r)}\,d\mathbf g(\eta)\int_{(\eta\sigma)(\beta)}\|\nabla_0u(\psi)\|_0^{p-1}\|\nabla_0d(\psi)\|_0 \,d\mathbf g(\psi)
\\
&  \geq & 2\beta\capac_p(R_{ab};S_a,S_b),
\end{eqnarray*}
where the last inequality follows from Lemma~\ref{lem:1}. Moreover, applying the co-area formula~\eqref{eq:co_area}, we obtain
$$
\int_{\sigma(\beta)}f_r(g)\|\nabla_0d(g)\|_0\,d\mathbf g(g)=\int_0^{\beta}\int_{d^{-1}(s)}f_r(\zeta)dH^{Q-1}(\zeta)ds.
$$
Let $F(s)$ denote the interior integral in the right-hand side,
$$
F(s)=\int_{d^{-1}(s)}f_r(\zeta)dH^{Q-1}(\zeta).
$$
The function $f_r$ is continuous on $G$, and the set $\sigma$ is countably rectifiable.  Then 
$$
F(s)=\int_{d^{-1}(s)}f_r(\zeta)dH^{Q-1}(\zeta)\to 2F(0)=2\int_{\sigma}f_r(\zeta)dH^{Q-1}(\zeta),\quad\text{as}\quad s\to 0.
$$
Thus gathering the above results we arrive at
\begin{eqnarray*}
\capac_p(R_{ab};S_a,S_b) &\leq & \lim\limits_{\beta\to 0}2\frac{1}{2\beta}\int_{\sigma(\beta)}f_r(g)\|\nabla_0d(g)\|_0d\mathbf g(g) 
\\
&= &\lim\limits_{\beta\to 0}\frac{1}{\beta}\int_{0}^\beta F(s)ds=\int_{\sigma}f_r(\zeta)\,H^{Q-1}(\zeta).
\end{eqnarray*}
\end{proof}

\noindent
{\it Proof of Theorem~\ref{th:adm}.} Let  $\sigma\in\Sigma$. We can assume that for any $r<r_0<\dist(\partial F_0,\partial R_{ab})$, the support of $f_r$ belongs to $R_{ab}$. Then, 
$$
f_r\to \|\nabla_0u\|_0^{p-1}\quad \mathbf g-\text{almost everywhere as}\quad r\to 0,
$$
and $\int_{R_{ab}}f_r^{q}\,d\mathbf g\leq \int_{R_{ab}}\|\nabla_0u\|_0^{p}\,d\mathbf g<\infty$. The Lebesgue dominated convergence theorem implies that $f_r$ converges to $ \|\nabla_0u\|_0^{p-1}$ in $L^q(R_{ab})$ as $r\to 0$. Therefore, there is a subsequence (that we will denote by the same symbol) $f_r$, such that
$$
\int_{\sigma}\Big|f_r-\|\nabla_0u\|_0^{p-1}\Big|\,dH^{Q-1}\to 0\quad\text{as}\quad r\to 0,
$$
for $M_q(E)$-almost all measures $\mu\in E$. Thus, the integral
$$\int_{\sigma}\|\nabla_0u\|_0^{p-1}\,dH^{Q-1}$$ exists and moreover inequality~\eqref{eq:adm} holds. 
\qed

\begin{corollary}
The function $\rho_0$ defined in~\eqref{eq:extr_func_surf} is admissible for the module $M_q(E)$ of the family of measures associated with sets separating $S_a$ and $S_b$ in $R_{ab}$.
\end{corollary}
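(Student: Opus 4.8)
The plan is to reduce the admissibility of $\rho_0$ to Theorem~\ref{th:adm}, which has already carried out the analytic heavy lifting (the capacity lower bound via a Lipschitz test function in Lemma~\ref{lem:1}, and the passage to the limit via integral means and the co-area formula in Lemma~\ref{lem:2}). The crucial structural observation is that $\rho_0$ is, up to an explicit multiplicative constant, the $(p-1)$-st power of the horizontal gradient of the capacity-extremal function
$u(g)=\frac{N_G^{\tau+1}(g)-a^{\tau+1}}{b^{\tau+1}-a^{\tau+1}}$, $\tau+1=\frac{p-Q}{p-1}$, which is exactly the function $u$ appearing in Theorem~\ref{th:adm}.

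First I would record the identity $\varrho_0=\|\nabla_0 u\|_0$ for the extremal function $\varrho_0$ of $M_p(\Gamma)$. Differentiating $u$ gives $\nabla_0 u=(b^{\tau+1}-a^{\tau+1})^{-1}\nabla_0(N_G^{\tau+1})$, while evaluating $C_{ab}(p,Q)=\int_a^b s^{\frac{1-Q}{p-1}}\,ds$ yields $C_{ab}(p,Q)=\frac{b^{\tau+1}-a^{\tau+1}}{\tau+1}$, so that $|\tau+1|\,C_{ab}(p,Q)=|b^{\tau+1}-a^{\tau+1}|$; comparing with the formula~\eqref{eq:extr_curve} for $\varrho_0$ gives $\varrho_0=\|\nabla_0 u\|_0$ after the signs of $\tau+1$ cancel. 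The logarithmic case $p=Q$ is identical, with $N_G^{\tau+1}$ replaced by $\log N_G$ and $C_{ab}(Q,Q)=\log(b/a)$. Next I would invoke item~4 of Corollary~\ref{cor:1-2}, namely $\rho_0=C_{ab}^{p-1}(p,Q)\,C_{S_1}^{-1}(p)\,\varrho_0^{p-1}$ (and its evident $p=Q$ analogue), to conclude $\rho_0=C_{ab}^{p-1}(p,Q)\,C_{S_1}^{-1}(p)\,\|\nabla_0 u\|_0^{p-1}$.

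The admissibility then follows by a short constant chase. For $\sigma\in\Sigma$ with associated measure $\mu$, Theorem~\ref{th:adm} gives $\int_\sigma\|\nabla_0 u\|_0^{p-1}\,dH^{Q-1}\geq \capac_p(R_{ab};S_a,S_b)$ for $M_q(E)$-almost all $\mu$, whence
\[
\int_\sigma \rho_0\, dH^{Q-1}=C_{ab}^{p-1}(p,Q)\,C_{S_1}^{-1}(p)\int_\sigma \|\nabla_0 u\|_0^{p-1}\, dH^{Q-1}\geq C_{ab}^{p-1}(p,Q)\,C_{S_1}^{-1}(p)\,\capac_p(R_{ab};S_a,S_b)=1,
\]
the final equality because $\capac_p(R_{ab};S_a,S_b)=M_p(\Gamma)=C_{S_1}(p)\,C_{ab}^{1-p}(p,Q)$ by Theorem~\ref{ThmodulusCarnot} and the capacity--module identity. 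This is precisely the condition $\int\rho_0\,d\mu\geq 1$.

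Honestly, the genuinely hard work is already behind us in Theorem~\ref{th:adm}, so the only points that require care are bookkeeping ones. The first is the identity $\varrho_0=\|\nabla_0 u\|_0$ together with the constant relation: here one must track the sign of $\tau+1$ (positive for $p>Q$, negative for $p<Q$) to verify $|b^{\tau+1}-a^{\tau+1}|=|\tau+1|\,C_{ab}(p,Q)>0$ in all cases, so that the multiplicative constants cancel to exactly $1$. The second is that Theorem~\ref{th:adm} delivers the inequality only for $M_q(E)$-almost all measures; accordingly, admissibility is obtained in the almost-everywhere sense that is the appropriate one for the module, consistent with Proposition~\ref{modulusprop}(7) and the hypotheses of Theorem~\ref{Badgertheorem}.
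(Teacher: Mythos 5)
Your proposal is correct and follows essentially the same route as the paper's own proof: both reduce admissibility to Theorem~\ref{th:adm} via the identity $\rho_0=C_{ab}^{p-1}(p,Q)\,C_{S_1}^{-1}(p)\,\|\nabla_0 u\|_0^{p-1}$ for the capacity-extremal $u$, and then close the constant chase with $\capac_p(R_{ab};S_a,S_b)=M_p(\Gamma)=C_{S_1}(p)\,C_{ab}^{1-p}(p,Q)$. Your added bookkeeping (the sign of $\tau+1$, the $M_q(E)$-almost-all caveat from Theorem~\ref{th:adm}) is sound and, if anything, slightly more careful than the paper's write-up.
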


\begin{proof}
As it was mentioned, the function 
$$
u(g)=\frac{N_G^{\tau+1}(g)-a^{\tau+1}}{b^{\tau+1}-a^{\tau+1}},\quad g\in G,
$$
is extremal for $\capac_p(R_{ab};S_a,S_b)$ and  
$
\varrho_0=\|\nabla_0 u\|_0
$ by~\eqref{eq:extr_curve}. Therefore,
$$
\capac_p(R_{ab};S_a,S_b)=\int_{R_{ab}}\|\nabla_0 u\|_0^p\,d\mathbf g=\int_{R_{ab}}\varrho_0^p\,d\mathbf g=M_p(\Gamma),
$$
where $\Gamma$ is the family of all locally rectifiable curves connecting $S_a$ and $S_b$.
Moreover, $\rho_0=C_{ab}^{p-1}(p,Q)C_{S_1}^{-1}(p)\|\nabla_0 u\|_0^{p-1}$ as shows~\eqref{eq:extr_func_surf}. Thus, for any $\sigma\in \Sigma$ we obtain
\begin{equation*}
\begin{split}
\int_{\sigma}\rho_0\,dH^{Q-1} 
& =  C_{ab}^{p-1}(p,Q)C_{S_1}^{-1}(p)\int_{\sigma}\|\nabla_0 u\|_0^{p-1}\,dH^{Q-1}
\\
& \geq   C_{ab}^{p-1}(p,Q)C_{S_1}^{-1}(p)\ \capac_p(R_{ab},S_a,S_b)
\\
& \geq   C_{ab}^{p-1}(p,Q)C_{S_1}^{-1}(p)\ M_p(\Gamma)=1.
\end{split}
\end{equation*}
\end{proof}

%%%%%%%%%%%%%%%%%%%%%%%%%%%%%%%%%%
%%%%%%%%%%%%%%%%%%%%%%%%%%%%%%%%%%
\subsection{Twisting map of the spherical ring in the Heisenberg group}

Let us consider the spherical ring $R_{1b}=\{(x_1,x_2,t)\colon 1\leq N_{\mathbb H}(x_1,x_2,t)\leq b\}$ in the Heisenberg group $\mathbb H=\mathbb H^1$  with respect to the homogeneous norm $N_{\mathbb H}$. The coordinates in $\mathbb H$ are $(x_1,x_2,t)$, or in the polar form $(\theta,\alpha, r)$, $(x_1,x_2,t)=G(\theta,\alpha, r)$, where $G\colon (\theta,\alpha, r)\to (x_1,x_2,t)$ is given by
\begin{equation}\label{eq:Heis_polar}
x_1=r\sqrt{\cos \alpha}\cos\theta,\quad x_2=r\sqrt{\cos \alpha}\sin\theta,\quad t=r^2\sin \alpha,
\end{equation}
 $\theta\in [0,2\pi)$, $\alpha\in (-\pi/2,\pi/2)$, $r\in [1,b]$.
The horizontal vector fields in the polar form are
\[
X_1=\frac{\partial}{\partial x_1}+2x_2 \frac{\partial}{\partial t}=\sqrt{\cos\alpha}\left(\cos(\theta-\alpha)\frac{\partial}{\partial r}+\frac{2}{r}\sin(\theta-\alpha)\frac{\partial}{\partial \alpha}-\frac{\sin\theta}{r\,{\cos\alpha}}\frac{\partial}{\partial\theta}\right),
\]
 \[
X_2=\frac{\partial}{\partial x_2}-2x_1 \frac{\partial}{\partial t}=\sqrt{\cos\alpha}\left(\sin(\theta-\alpha)\frac{\partial}{\partial r}-\frac{2}{r}\cos(\theta-\alpha)\frac{\partial}{\partial \alpha}+\frac{\cos\theta}{r\,{\cos\alpha}}\frac{\partial}{\partial\theta}\right).
\]
The horizontal norm of the horizontal gradient of a smooth function $f(\theta,\alpha, r)$ is calculated as $\|\nabla_0f\|_0=(X^2_1(f)+X_2^2(f))^{1/2}$.
 The $H^{3}$-Hausdorff measure element on the sphere  $S_r\setminus \mathcal Z$ is $d\omega_r=r^3\sqrt{\cos\alpha }\,d\alpha d\theta$.  
In particular, the area of the unit sphere $S_1$ is calculated as
\[
\text{Area}(S_1)=4\sqrt{2\pi}\,\,\Gamma^2\left(\frac{3}{4}\right),
\]
where Euler's $\Gamma$-function is
$
\Gamma \left( x \right) = \int_0^\infty {t^{x - 1} e^{-t} dt}$.
The radial flow $\phi(r,\theta,\alpha)$ on $\mathbb H$ orthogonal to the sphere $S_1\setminus \mathcal Z$ is a solution to the initial-value problem~\eqref{radialflow} in the particular case of the homogeneous Heisenberg norm $N_{\mathbb H}$,
given by
\begin{eqnarray*}
x_1(r) &=& r\sqrt{\cos\alpha}\cos\left(\theta-\tan\alpha\,\log r\right),\\
x_2(r) &=& r\sqrt{\cos\alpha}\sin\left(\theta-\tan\alpha\,\log r\right),\\
t(r) &=& r^2\sin\alpha,
\end{eqnarray*}
where $1\leq r\leq b$, and   $\theta\in [0,2\pi)$, $\alpha\in (-\pi/2,\pi/2)$ are fixed. 
The horizontal norm of $\dot{\phi}(r,\theta,\alpha)=\frac{\partial}{\partial r}\phi(r,\theta,\alpha)$  is $\|\dot{\phi}(r,\theta,\alpha)\|_0=\cos^{-1/2}\alpha$.
 
An analogue to  Theorem~\ref{Rodin1} can be formulated for the spherical ring domain in $\mathbb H$ as follows. Let $\Gamma_0$ denote the family of curves $\phi_{\theta\alpha}(\cdot)\colon[1,b]\to\mathbb H$ given by radial flow $\phi(r,\theta,\alpha)$ for every fixed $\alpha\in (-\pi/2,\pi/2)$, $\theta\in [0,2\pi)$. In order to preserve the horizontal nature of the families of curves we require from a smooth map $f\colon \mathbb H\to \mathbb H$ to be the contact map, that is a map whose differential  preserves the horizontal planes $\spn\{X_1(g),X_2(g)\}$ for all $g\in\mathbb H$, see, for instance,~\cite{KorReim85}.

 \begin{theorem}\label{Rodin33}
Let $f\colon \mathbb H\to \mathbb H$ be a $C^1$-smooth orientation preserving contact map, and let $c_{\theta\alpha}(r)=f({\phi}_{\theta\alpha}(r))$.   Set $1/p+1/q=1$, $p,q>1$, and 
\[
\ell(\theta,\alpha)=\int_1^{R}\left(\frac{\|\dot{c}_{\theta\alpha}\|_0}{J_fr^3\sqrt{\cos\alpha}}\right)^{q}J_fr^3\sqrt{\cos\alpha}\,dr,\quad \alpha\in [-\pi/2,\pi/2], \,\,\,\theta\in [0,2\pi).
\]
Then 
\[
\rho_0(y)=\frac{1}{\ell(\theta,\alpha)}\left(\frac{\|\dot{c}_{\theta\alpha}\|_0}{J_fr^3\sqrt{\cos\alpha}}\right)^{\frac{1}{p-1}}\circ f^{-1},\quad G(\theta,\alpha,r)\in R_{1b}, 
\]
$y=f(x_1,x_2,t)\in R'_{1b}=f(R_{1b})$, is the extremal function for the $p$-module $M_{p}(f(\Gamma_0))$  and moreover $M_{p}(f(\Gamma_0))=\int_{R'_{1b}}\rho_0^p\,dy=\int_{-\pi/2}^{\pi/2}\int_0^{2\pi} \ell^{1-p}\,d\theta d\alpha$.
\end{theorem}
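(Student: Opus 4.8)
The plan is to run the proof of Theorem~\ref{Rodin1} verbatim, reading the radial flow $\phi$ of Proposition~\ref{lemmahorizflow} as the parametrizing embedding $\mathbf u$ and the unit sphere $S_1\subset\mathbb H$, coordinatized by $(\theta,\alpha)$, as the base $D$; the role of the Euclidean weighted Jacobian $I_f=J_{\mathbf u}J_f$ is now played by the Jacobian of the composite polar map $(\theta,\alpha,r)\mapsto y=f(\phi(r,\theta,\alpha))$, which I again denote $I_f$ and which is exactly the density appearing inside $\ell$, so that $dy=I_f\,dr\,d\alpha\,d\theta$. Before anything else I would check that the competitors make sense: because $f$ is a contact map its differential preserves the horizontal bundle, and each radial curve $\phi_{\theta\alpha}$ is horizontal (its velocity is proportional to $\nabla_0N_{\mathbb H}$), so every image $c_{\theta\alpha}=f\circ\phi_{\theta\alpha}$ is again horizontal, hence locally rectifiable with sub-Riemannian arc-length element $ds=\|\dot c_{\theta\alpha}\|_0\,dr$. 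Since families of non-horizontal curves carry zero $p$-module, it is legitimate to study $f(\Gamma_0)$ alone. The second standing tool is the polar integration formula~\eqref{polardecompG} of Proposition~\ref{prop:sph_int}, which for $\mathbb H^1$ (with $Q=4$) lets one rewrite every integral over $R'_{1b}$ as an iterated integral in $(r,\theta,\alpha)$ against $I_f\,dr\,d\alpha\,d\theta$.

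The body then has three steps, each mirroring Theorem~\ref{Rodin1}. First, admissibility: writing $\rho_0\circ f\circ\phi=\ell^{-1}(\|\dot c_{\theta\alpha}\|_0/I_f)^{1/(p-1)}$ and using $1/(p-1)=q-1$ together with the identity $(\|\dot c\|_0/I_f)^{q-1}\|\dot c\|_0=(\|\dot c\|_0/I_f)^{q}I_f$, one gets
\[
\int_{c_{\theta\alpha}}\rho_0\,ds=\frac{1}{\ell(\theta,\alpha)}\int_1^b\Big(\frac{\|\dot c_{\theta\alpha}\|_0}{I_f}\Big)^{q}I_f\,dr=1
\]
for every $(\theta,\alpha)$, so $\rho_0$ is admissible and normalizes every curve of $f(\Gamma_0)$. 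Second, extremality via the Beurling--Badger criterion in the reduced form of Remark~\ref{extremaltB}: for arbitrary admissible $\rho$ the inequality $\int_{c_{\theta\alpha}}(\rho-\rho_0)\,ds\ge 0$ is multiplied by $\ell^{1-p}$ and integrated over $(\theta,\alpha)$; inserting $ds=\|\dot c\|_0\,dr$ and passing to $y=f(\phi(r,\theta,\alpha))$ through~\eqref{polardecompG} collapses the double integral to $\int_{R'_{1b}}\rho\,\rho_0^{p-1}\,dy\ge\int_{R'_{1b}}\rho_0^{p}\,dy$, which by Theorem~\ref{Badgertheorem} applied to $g=\rho-\rho_0$ certifies that $\rho_0$ is extremal and that $f(\Gamma_0)$ is the complete extremal family. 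Third, the value of the module: since $\rho_0^{p}\circ f\circ\phi=\ell^{-p}(\|\dot c\|_0/I_f)^{q}$ with $p/(p-1)=q$, integrating $\rho_0^p$ against $I_f\,dr\,d\alpha\,d\theta$ and recognizing the inner $r$-integral as $\ell(\theta,\alpha)$ yields $M_p(f(\Gamma_0))=\int_{-\pi/2}^{\pi/2}\int_0^{2\pi}\ell^{1-p}\,d\theta\,d\alpha$.

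I expect the single genuine obstacle to be the explicit and correct identification of the Jacobian density $I_f$ in the sub-Riemannian polar chart, which is more delicate than in the Euclidean case of Theorem~\ref{Rodin1}. The difficulty is that the flow $\phi$ is orthogonal to the spheres $S_r$ only with respect to the horizontal inner product $\langle\cdot,\cdot\rangle_0$, whereas the Haar measure is ordinary Lebesgue measure; the passage between them is governed by the density $\lambda(\xi)=\|\partial_s\phi\|_0^{-1}=\sqrt{\cos\alpha}$ of Proposition~\ref{lemmahorizflow}(ii), which is precisely the factor relating the Radon measure $dv=d\alpha\,d\theta$ on $S_1$ to the Hausdorff surface element $d\omega_r=r^3\sqrt{\cos\alpha}\,d\alpha\,d\theta$ on $S_r$. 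One must therefore keep three ingredients cleanly separated — the horizontal speed $\|\dot c_{\theta\alpha}\|_0$ of the image velocity, the radial weight $s^{Q-1}=r^3$ coming from~\eqref{polardecompG}, and the density $\lambda$ — so that the exponent bookkeeping $p/(p-1)=q$ reproduces $\ell$ identically in the admissibility step and in the final integration; in particular one must be careful not to insert the factor $\sqrt{\cos\alpha}$ twice, since it is exactly this normalization that decides whether the outer integral is taken against $dv=d\alpha\,d\theta$ or against the surface measure. Once $I_f$ is pinned down this way, the remainder is word-for-word the Euclidean proof.
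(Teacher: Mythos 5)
Your architecture (admissibility, then extremality via Remark~\ref{extremaltB}, then direct evaluation of $\int\rho_0^p\,dy$) is exactly the paper's own, and your admissibility computation and exponent bookkeeping are fine. The genuine gap sits precisely at the step you yourself single out as the crux, and you resolve it the wrong way: you assert that the Jacobian of $(\theta,\alpha,r)\mapsto f(\phi(r,\theta,\alpha))$ equals the density $J_fr^3\sqrt{\cos\alpha}$ appearing inside $\ell$, i.e.\ that $dy=J_fr^3\sqrt{\cos\alpha}\,dr\,d\alpha\,d\theta$. This is false, and it contradicts the very facts you quote. Since $dv=d\alpha\,d\theta$ and $Q=4$, the polar decomposition \eqref{polardecompG} gives $d\mathbf{g}=r^3\,dr\,d\alpha\,d\theta$ along the flow; equivalently, the polar map \eqref{eq:Heis_polar} has Jacobian $r^3$, and the flow differs from it only by the twist $\theta\mapsto\theta-\tan\alpha\,\log r$, whose Jacobian is $1$. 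Hence the correct pullback is $dy=J_fr^3\,dr\,d\alpha\,d\theta$: the factor $\sqrt{\cos\alpha}$ belongs to the surface measure $d\omega_r=r^3\sqrt{\cos\alpha}\,d\alpha\,d\theta$, not to the volume element.

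With the correct density your first two steps survive, because multiplying the weight $W$ by any function $h(\theta,\alpha)$ multiplies both $\ell$ and the kernel $\left(\|\dot{c}_{\theta\alpha}\|_0/W\right)^{1/(p-1)}$ by $h^{1-q}$, so $\rho_0$ is literally unchanged; thus the $\rho_0$ of the theorem is indeed admissible and extremal. But your third step then yields
\[
M_p(f(\Gamma_0))=\int_{R'_{1b}}\rho_0^p\,dy=\int_{-\pi/2}^{\pi/2}\int_0^{2\pi}\ell^{1-p}(\theta,\alpha)\,\frac{d\theta\,d\alpha}{\sqrt{\cos\alpha}},
\]
and not $\int\int\ell^{1-p}\,d\theta\,d\alpha$. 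The discrepancy is visible already at $f=\mathrm{id}$: there $\ell=(\cos\alpha)^{\frac{1}{2}-q}\int_1^b r^{3(1-q)}\,dr$, so the formula you are proving produces the angular integrand $(\cos\alpha)^{(p+1)/2}$, whereas Theorem~\ref{ThmodulusCarnot} (equivalently the Kor\'anyi--Reimann value quoted in the example following the theorem) requires $\lambda^p(\xi)=(\cos\alpha)^{p/2}$. So the proposal cannot be completed as written: one must either define $\ell$ with the density $J_fr^3$ (leaving $\rho_0$ unchanged, and then $M_p=\int\int\ell^{1-p}\,d\theta\,d\alpha$ does hold), or keep the stated $\ell$ and insert the factor $(\cos\alpha)^{-1/2}$ in the outer integral. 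In fairness, the paper's own proof commits the identical slip --- it treats $J_fr^3\,dr\,dH^{3}$, with $dH^{3}=\sqrt{\cos\alpha}\,d\alpha\,d\theta$ the $H^3$-measure on $S_1$, as if it were the Haar measure --- so you have faithfully reconstructed the published argument; but the step is invalid as mathematics, and the theorem's final display needs the same amendment.
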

\begin{proof}
Let us first observe that $\int_{{c}_{\theta\alpha}}\rho_0\,ds=1$, where $ds$ is the arc-length element defined with respect to the norm $N_{\mathbb H}$. Indeed,
\begin{eqnarray*}
\int_{c_{\theta\alpha}}\rho_0\,ds&=&\int_1^{b}(\rho_0\circ f)\|\dot{c}_{\theta\alpha}\|_0\,dr=\frac{1}{\ell}\int_1^{b}\left(\frac{\|\dot{c}_{\theta\alpha}\|_0}{J_fr^3\sqrt{\cos\alpha}}\right)^{\frac{1}{p-1}}\|\dot{c}_{\theta\alpha}\|\,dr\\
&=&\frac{1}{\ell}\int_1^{b}\left(\frac{\|\dot{c}_{\theta\alpha}\|_0}{J_fr^3\sqrt{\cos\alpha}}\right)^{q}J_fr^3\sqrt{\cos\alpha}\,dr=1,
\end{eqnarray*}
for all $\alpha\in (-\pi/2,\pi/2)$ and $\theta\in [0,2\pi)$. Therefore, $\rho_0$ is admissible for $f(\Gamma_0)$ and 
\begin{equation}\label{moddd1}
M_p(f(\Gamma_0))\leq \int_{R'_{1b}}\rho_0^p\,d\mathbf g.
\end{equation}
On the other hand,
for any $\rho$ admissible for $f(\Gamma_0)$ we have $\int_{c_{\theta\alpha}}\rho\, ds \geq 1$, and therefore,
\[
\int_{c_{\theta\alpha}}(\rho-\rho_0)\, ds\geq 0.
\]
This implies that
\[
\frac{1}{\ell^{p-1}(\theta,\alpha)}\int_1^b [(\rho-\rho_0)\circ f\,]\,\|\dot{c}_{\theta\alpha}\|_0\,dr\geq 0.
\]
Then
\[
\int_{S_1\setminus\mathcal Z} \int_1^b \left((\rho-\rho_0)\rho^{p-1}_0\circ f\right)\,J_fr^3\, dr\,dH^{3}\geq 0.
\]
Equivalently,
\[
\int_{R'_{1b}}\rho\rho_0^{p-1}\,d\mathbf g\geq \int_{R'_{1b}}\rho_0^{p}\,d\mathbf g.
\]
The H\"older inequality yields
\[
\left(\int_{R'_{1b}}\rho^{p}\,d\mathbf g\right)^{1/p}\left(\int_{R'_{1b}}\rho_0^{(p-1)q}\,d\mathbf g\right)^{1/q}\geq \int_{R'_{1b}}\rho\rho_0^{p-1}\,d\mathbf g\geq \int_{R'_{1b}}\rho_0^{p}\,d\mathbf g,
\]
or since $(p-1)q=p$,
\[
 \int_{R'_{1b}}\rho^{p}\,d\mathbf g \geq  \int_{R'_{1b}}\rho_0^{p}\,d\mathbf g.
\]
Taking infimum in the above inequality over all admissible $\rho$ we conclude that
\begin{equation}\label{moddd2}
M_p(f(\Gamma_0))\geq \int_{R'_{1b}}\rho_0^{p}\,d\mathbf g.
\end{equation}
Comparing \eqref{moddd1} and \eqref{moddd2} we see that
the function $\rho_0$ is extremal for the module $M_p(f(\Gamma_0))$. Now we can calculate the $p$-module as
\begin{eqnarray*}
M_p(f(\Gamma_0))&=&\int_{R'_{1b}}\rho_0^{p}\,d\mathbf g=\int_{S^1\setminus\mathcal Z} \int_1^b [\rho_0^{p}\circ f]\,J_f r^3dr\,dH^{3}\\
&=&\int_{S_1\setminus\mathcal Z} \int_1^b\frac{1}{\ell^p(\theta,\alpha)} \left(\frac{\|\dot{c}_{\theta\alpha}\|_0}{J_fr^3\sqrt{\cos\alpha}}\right)^{\frac{p}{p-1}}\,J_fr^3\,dr\,dH^3
\\
&=&\int_{-\pi/2}^{\pi/2}\int_0^{2\pi} \ell^{1-p}(\theta,\alpha)\,d\theta d\alpha.
\end{eqnarray*}
\end{proof}

\begin{example}
The $p$-module of $\Gamma=\Gamma(R_{a,b};S_a,S_b)$ 
\[
M_p(\Gamma_0)=M_p(\Gamma_0)=\begin{cases}
\frac{2\pi\sqrt{\pi}\Gamma\left(\frac{p}{4}+\frac{1}{2}\right)}{\Gamma\left(\frac{p}{4}+1\right)}\left(\frac{p-4}{p-1}\right)^{p-1}\left(b^{\frac{p-4}{p-1}}-1\right)^{1-p}, & \text{for $p\neq 4$},\\
\frac{\pi^2}{(\log b)^3},  & \text{for $p= 4$},
\end{cases}
\]
was calculated in \cite{KorReim87}. 

Let us now calculate the $p$-module of $f(\Gamma_0)$, where $f$ is a contact $C^1$-smooth orientation preserving map. If we try to create  a twisting
map similarly to Example~\ref{e7}   in the spherical coordinates  written as
$$G^{-1}\circ f\circ G\colon (\theta,\alpha, r)\to (\theta+\omega(r),\alpha,r),\quad \omega(1)=0,$$
with $G$ defined by~\eqref{eq:Heis_polar}, (i.e, the boundary
sphere $S_1$ remains unchanged while the  spheres $S_r$ rotate to the angle $\omega(r)$, $r\in (1,b]$), then the condition of horizontality for the curves $f(\phi_{\theta\alpha})$ is quite rigid, which
leads us to $\omega(r)\equiv 0$. Let us try to modify the twisting map by
$$G^{-1}\circ f\circ G\colon (\theta,\alpha, r)\to (\theta+\tan\alpha\log r +\omega_1(r),\alpha+\omega_2(r), r),\quad \omega_1(1)= \omega_2(1)=0.$$
Then the image $c_{\theta\alpha}(r)=f(\phi_{\theta\alpha}(r))$ is written in coordinates as
\begin{eqnarray*}
x_1(r)&=& r{\sqrt{|\cos(\alpha+\omega_2(r))|}}\cos(\theta+\omega_1(r)),\\
x_2(r)&=& r{\sqrt{|\cos(\alpha+\omega_2(r))|}}\sin(\theta+\omega_1(r)),\\
t(r)&=& r^2\sin(\alpha+\omega_2(r)).
\end{eqnarray*}
The horizontality condition $\dot{t}=2(\dot{x}_1x_2-\dot{x}_2x_1)$ is equivalent to $$\dot{\omega}_1=-\frac{1}{2}\dot{\omega}_2-\frac{1}{r}\tan(\alpha+\omega_2).$$
For example, 
\[
\omega_2(r)=r-1,\quad\omega_1=\frac{1-r}{2}-\int_1^r\frac{\tan(\alpha+s-1)}{s}ds.
\]
Then  $J_f=J_{G^{-1}\circ f\circ G}=1$ and
\[
\|\dot{c}_{\theta\alpha}\|_0=\frac{1}{2}\sqrt{\frac{4+r^2}{\cos(\alpha+r-1)}}.
\]
In Theorem~\ref{Rodin33} we calculate
\[
\ell(\theta,\alpha)=\ell(\alpha)=\int_1^b\left(\frac{4+r^2}{4\cos(\alpha+r-1)}\right)^{q/2}r^{3-q}(\cos{\alpha})^{\frac{1}{2}(1-q)}\,dr.
\]
and the $p$-module of $f(\Gamma_0)$ is
\[
M_p(f(\Gamma_0))=2\pi\int_{-\pi/2}^{\pi/2} \ell^{1-p}(\alpha)d\alpha.
\]
Unfortunately, the integral inequality which follows from the monotonicity of the module $M_p(f(\Gamma_0))\leq M_p(\Gamma_0)$ is quite difficult, and there is very little hope
to obtain simple inequalities as in Example~\ref{e7}.
\end{example}

\end{document}